\newtheorem{thm}{Theorem}[section]
\newtheorem{cor}[thm]{Corollary}
\newtheorem{lem}[thm]{Lemma}
\newtheorem{prop}[thm]{Proposition}
\newtheorem{assume}[thm]{Assumption}
\theoremstyle{definition}
\newtheorem{defn}[thm]{Definition}
\theoremstyle{remark}
\newtheorem{rem}[thm]{Remark}
\numberwithin{equation}{section}
\DeclarePairedDelimiter{\abs}{\lvert}{\rvert}
\DeclarePairedDelimiter{\norm}{\lVert}{\rVert}
\DeclarePairedDelimiter{\bra}{(}{)}
\DeclarePairedDelimiter{\set}{\{}{\}}
\DeclareMathAlphabet{\mathup}{OT1}{\familydefault}{m}{n}
\newcommand{\dx}[1]{\mathop{}\!\mathup{d} #1}
\newcommand{\pderiv}[3][]{\frac{\mathop{}\!\mathup{d}^{#1} #2}{\mathop{}\!\mathup{d} #3^{#1}}}
\newcommand{\eps}{\varepsilon}
\DeclareMathOperator*{\argmin}{arg\,min}
\DeclareMathOperator*{\supp}{supp}
\DeclareMathOperator{\Id}{Id}
\DeclareMathOperator{\loc}{loc}
\DeclareMathOperator{\Var}{Var}
\DeclareMathOperator{\LSI}{LSI}
\DeclareMathOperator{\convex}{convex}
\DeclareMathOperator{\unimodal}{unimodal}
\DeclareMathOperator{\Kramers}{Kramers}
\newcommand{\N}{\mathds{N}}
\newcommand{\R}{\mathds{R}}
\newcommand{\cC}{\ensuremath{\mathcal C}}
\newcommand{\cD}{\ensuremath{\mathcal D}}
\newcommand{\cF}{\ensuremath{\mathcal F}}
\newcommand{\cH}{\ensuremath{\mathcal H}}
\newcommand{\cM}{\ensuremath{\mathcal M}}
\newcommand{\ctM}{\ensuremath{\widetilde{\mathcal{M}}}}
\newcommand{\ctF}{\ensuremath{\widetilde{\mathcal{F}}}}
\newcommand{\cP}{\ensuremath{\mathcal P}}
\newcommand{\cT}{\ensuremath{\mathcal T}}
\definecolor{darkblue}{rgb}{0,0,0.6}
\title[A constrained Fokker-Planck equation: gradient flow formulation]{Gradient flow formulation and longtime behaviour of a constrained Fokker-Planck equation}
\author{Simon Eberle$^1$}
\address{$^1$Fakultät für Mathematik, Universität Duisburg-Essen.}
\author{Barbara Niethammer$^2$}
\author{André Schlichting$^2$}
\address{$^2$Institut für Angewandte Mathematik, Universität Bonn.}
\email{simon.eberle@uni-due.de}
\email{niethammer@iam.uni-bonn.de}
\email{schlichting@iam.uni-bonn.de}
\let\rho\varrho
\begin{document}

\keywords{constrained gradient flow, entropy method, energy-dissipation relation, Fokker-Planck equation}

\subjclass[2010]{35K55,35Q84,37B55,37D35}

\begin{abstract}
\noindent We consider a Fokker-Planck equation which is coupled to an externally given time-dependent constraint on its first moment.
This constraint introduces a Lagrange-multiplier which  renders the equation nonlocal and nonlinear.

In this paper we exploit an interpretation of this equation as a Wasserstein gradient flow of a free energy ${\mathcal{F}}$  on a time-constrained manifold.
First, we prove existence of solutions by passing to the limit in an explicit Euler scheme obtained by minimizing $h {\mathcal{F}}(\varrho)+W_2^2(\varrho^0,\varrho)$ among all $\varrho$ satisfying the constraint for some $\varrho^0$ and time-step $h>0$.

Second, we provide quantitative estimates for the rate of convergence to equilibrium when the constraint converges to a constant.
The proof is based on the investigation of a suitable relative entropy with respect to  minimizers of the free energy chosen according to the constraint.
The rate of convergence can be explicitly expressed in terms of constants in suitable logarithmic Sobolev inequalities.
\end{abstract}

\maketitle

\section{Introduction}

\noindent We consider a nonlocal Fokker-Planck equation
\begin{equation}\label{e:cFP}
\tau \partial_t \rho(t,x) = \partial_x \left ( \nu^2 \partial_x \rho(t,x) + \left ( H'(x) -\sigma(t) \right ) \rho(t,x) \right ) \ \ \text{with}\ \  \rho(0,x) = \rho^0(x) ,
\end{equation}
which describes the evolution of an ensemble of identical particles in a potential well subject to stochastic fluctuations. Here, a single particle is characterized by its thermodynamic state
\(x\), $H$ is its free energy and
 \(\rho(t,x)\) denotes  the probability density of the whole  system at time \(t\).
Equation \eqref{e:cFP} contains the small parameters~$\tau$ and~$\nu$, where $\nu$ accounts for entropic effects and $\tau$ is the typical relaxation time of a single particle.
Furthermore, $\sigma(t)$ is a Lagrange multiplier which is such that the dynamical constraint
\begin{align}\label{lagrange_multiplier}
\int x\, \rho(t,x) \dx{x} = \ell(t)
\end{align}
is satisfied, where $\ell$ is an externally given constraint.
A direct calculation shows that the Lagrange multiplier is obtained as a nonlocal interaction term
\begin{align}\label{lagranian_multiplier_in_introduction}
\sigma(t) = \int H'(x) \rho(t,x)\dx{x} + \tau\dot\ell(t) .
\end{align}
Equation \eqref{e:cFP} together with \eqref{lagrange_multiplier} was introduced in~\cite{Dreyer2010,DGH11} to model hysteretic behaviour in  many-particle storage systems,
such as for example Lithium-ion batteries subject to externally imposed charging and discharging.
In this context $H$ is typically nonconvex which gives rise to nontrivial dynamics that are studied in various scaling regimes in \cite{HNV12,HNV14}.

System~\eqref{e:cFP} has a free energy, which is also essential in the modeling and its thermodynamic derivation \cite{Dreyer2010,DGH11}, consisting of an entropy and a potential energy
\begin{equation}\label{e:def:freeEnergy}
  \cF(\rho) = \nu^2 \int \rho(x) \log \rho(x) \dx{x} + \int H(x) \; \rho(x) \dx{x} + \log Z_0 .
\end{equation}
Here we added a constant $Z_0 := \int \exp\bra[\big]{-\frac{H(x)}{\nu^2}} \dx{x}$ to make $\cF(\rho)$ nonnegative. By differentiating the free energy along the solution to~\eqref{e:cFP},~\eqref{lagrange_multiplier}, one obtains a second law of thermodynamics for open systems, that is
\begin{equation}\label{e:freeEnergyDissipation}
\pderiv{}{t} \cF(\rho(t)) = - \cD(\rho(t),\sigma(t)) + \tau\, \sigma(t) \, \dot\ell(t) ,
\end{equation}
where the nonnegative dissipation $\cD$ is defined by
\begin{equation}\label{e:def:Dissipation}
 \cD(\rho(t),\sigma(t)) := \int \abs[\big]{ \partial_x \bra*{\nu^2 \log \rho(t,x) + H'(x) - \sigma(t)}}^2  \rho(t,x)\dx{x} .
\end{equation}
If $\dot \ell=0$ the identity \eqref{e:freeEnergyDissipation} is characteristic for systems possessing a gradient flow structure  and  very useful in the investigation of the long-time behaviour of solutions,
since it shows that $\cF$ is a Lyapunov function.
Motivated by this feature, the goal of this paper is two-fold. First we will develop an existence theory for \eqref{e:cFP},\eqref{lagrange_multiplier} based on an underlying
gradient flow formulation. Second, we will provide quantitative estimates of the rate of convergence of solutions to equilibrium in case that $\ell(t) \to \ell^*$ as $t \to \infty$.

We now give a brief overview of the corresponding results.

The first part of this paper on the gradient flow formulation is inspired by the seminal paper \cite{JKO98}, presenting an interpretation of a linear
Fokker-Planck equation as a gradient flow of the free energy~$\cF$ in the space of probability densities with finite second moment endowed with the Wasserstein metric and subject
to the physically accurate free energy functional \eqref{e:def:freeEnergy} (cf.\ also~\cite{Ambrosio2007}). This was the starting point of interpretations of more general nonlinear, nonlocal
Fokker-Planck equations as Wasserstein gradient flows with respect to time-dependent energies, on constrained manifolds and even of none dissipative equations. A selection of works introducing certain constraints into gradient flows are \cite{FerreiraGuevara15,CarlenGagbo03,TudorascuWunsch11}.

In our case, the equation has a time independent energy functional $\cF$, however subject to a possibly time-dependent constraint~\eqref{lagrange_multiplier}. Let us point out here,
that such a setting raises the problem, that the constrained gradient cannot be simply defined as the projection onto the constrained manifold, since this could lead to a violation of the dynamical constraint. In Section~\ref{S:CGF}, it is shown that a gradient flow with respect to a dynamical constraint needs a further restriction of the space of admissible tangential directions in order to match the dynamical constraint at all times. Then, among these admissible tangential directions, the one of steepest descent of the free energy is chosen.

This formal definition is complemented by proving rigorously in Section~\ref{S:time_discrete} that~\eqref{e:cFP} can be obtained as Wasserstein gradient flow with dynamical constraint.
To that aim we introduce time-discrete solutions obtained from an implicit time-discrete Euler scheme. In the setting of geometric flows,
the scheme was first introduced in~\cite{Luckhaus1990,Luckhaus1995} and in the setting of Fokker-Planck equations it goes back to~\cite{JKO98}. Then, equation~\eqref{e:cFP} is obtained by passing
to the limit in the time step of the discrete scheme. In addition, this provides an alternative well-posedness result to \cite[Lemma 1]{HNV14} that is
based on a fixed point argument. Let us also note, that well-posedness in the case of compact state space is also obtained in \cite{Dreyer2011}.

In the second part of this paper, Section~\ref{s:LT}, we prove a quantitative long-time result under the assumption that $\ell(t) \to \ell^*\in \R$ for $t \to \infty$.
In order to identify the internal time-scale of the system we set from now on $\tau=1$.
The main difficulty in the investigation of the long-time behaviour is  that due to the external constraint~\eqref{lagrange_multiplier} the system is not thermodynamically closed,
that is the free energy~\eqref{e:def:freeEnergy} is not strictly decreasing but satisfied the energy-dissipation identity~\eqref{e:freeEnergyDissipation}.
The key idea in the analysis is to introduce
a suitable comparison state $\gamma_{\lambda(\ell)}$ parametrized by the constraint $\ell$ with the help of some convenient reparametrization $\lambda:\R \to \R$. This state is characterized by the constrained minimization of the free energy~\eqref{e:def:freeEnergy} among all states satisfying~\eqref{lagrange_multiplier} (cf.\ Proposition~\ref{prop:constraint:energy:min}).
Then, we are able to establish for the relative entropy with respect to this state, defined by
\begin{equation}\label{e:def:RelEnt}
  \cH\bra[\big]{\rho | \gamma_{\lambda(\ell)}}  := \int  \rho(x) \log \frac{\rho(x)}{\gamma_{\lambda(\ell)}(x)} \dx{x},
\end{equation}
a differential inequality which implies a
quantitative convergence to the equilibrium state. The constant in this differential estimate is characterized by the constant in a suitable logarithmic Sobolev inequality.

Since the relative entropy dominates the $L^1$-norm, we are able to show,  provided $\ell \to \ell^*$ sufficiently fast, that there exists $C< \infty$ depending on the initial value and the convergence assumption on $\ell$ as well as a $c>0$ depending on the constant in the logarithmic Sobolev inequalities such that (cf.\ Theorem~\ref{thm:LT:quant} and Corollaries~\ref{cor:LT:RelEnt*} and~\ref{cor:LT:sigma*})
\begin{equation}\label{e:intro:conv}
  \abs{\sigma(t) - \lambda(\ell^*)} + \int \abs{ \rho(t) - \gamma_{\lambda(\ell^*)}} \dx{x} \leq C e^{-c\, t} .
\end{equation}
Here, we can identify three possible internal time scales: In the \emph{strictly convex} case,  that is $H''(x) \geq k > 0$ for all $x\in \R$, then $c$ in~\eqref{e:intro:conv}
can be chosen as $k/2$. In the \emph{unimodal} case where $H$ has only one global minimum, then $c= \tilde c /\nu^2$ for some $\tilde c>0$ that is independent of $\nu$.
In the so-called \emph{Kramers} case, where  $H$ has a multi-well structure, we obtain that  $c= \tilde c\nu^2 \exp\bra*{ -\Delta H /\nu^2}$ is exponentially small in $\nu$.
Here, $\Delta H$ is a characteristic energy barrier of the system (cf.\ Section~\ref{s:LT:nu}).
Moreover we show that for $\ell^*$ outside of a certain regime and for sufficiently well-prepared initial data, the multi-well structure does not play a role in the dynamics and $c= \tilde c /\nu^2$ for some $\tilde c>0$.

\section{Constrained gradient flows}\label{S:CGF}
\subsection{Setting}\label{S:CGF:Setting}
Let $\cM$ be the state manifold and $\cF: \cM \to \R$ a smooth free energy function. Furthermore, $\cM$ shall possess in each point $u\in \cM$ a tangent space $T_u\cM$ and on~$T_u\cM$ a positive definite symmetric bilinear form $g_u(\cdot,\cdot): T_u \cM \times T_u \cM \to \R$. Then $u: \R_+ \to \cM$ is the gradient flow with respect to $\cF$ if it solves
\begin{equation}\label{eq:defGF}
 g(\partial_t u, s) = - D \cF(u)\cdot s  \qquad \text{ for all } s\in T_u M ,
\end{equation}
where $D\cF(u)\cdot s$ denotes the first variation of $\cF$ at $u$ in direction $s$.

Another formulation (cf.~Mielke~\cite{Mielke2011}) uses the inverse of the metric $g_u$ denoted by the \emph{Onsager operator} $K_u:T^\ast_u \cM \to T_u\cM$, which is assumed to be a positive semidefinite linear operator. Then, the gradient flow of $u$ with respect to $\cF$ is defined by
\begin{equation}\label{eq:defGFOnsager}
  \partial_t u = - K_u D\cF(u) .
\end{equation}
By the definition of the Onsager operator, the cotangent space is given as the preimage of the Onsager operator
\begin{equation}
 \cT_u^*\cM := \set{v : \text{there exists } s\in T_u\cM \text{ such that } K_u v = s } .
\end{equation}
Then, any covector field $v : \R_+ \to \cT_u^* \cM$ gives rise to a curve $\bra[\big]{u(t)}_{t\geq 0}$ on~$\cM$ by solving in a suitable sense
\begin{equation}\label{CGF:CE}
  \partial_t u(t) = K_{u(t)} v(t) .
\end{equation}
We call this the continuity equation on~$\cM$, since it respects possible conservation laws. For instance for $\cM = \cP_2(\R)$ the space of absolutely continuous probability measures with bounded second moment, we formally have $\cT_u \cM= \set*{ s : \int s =0}$ and $K_u v = - \partial_x \bra*{ u \partial_x v}$. Hence, \eqref{CGF:CE} becomes the classical continuity equation on~$\R$: $\partial_t u + \partial_x\bra*{ u \partial_x v} = 0$ with driving potential field $\partial_x v$.

In the following, we often write the identities~\eqref{eq:defGF} or~\eqref{eq:defGFOnsager} as $\partial_t u = - \nabla \cF(u)$ and do not make the underlying metric respectively Onsager operator apparent in the notation. Moreover, we let $\abs{\nabla \cF(u)}_u^2 := g_u(\nabla\cF(u), \nabla\cF(u) )= D\cF(u) \cdot K_u D\cF(u)$.

A crucial consequence of the gradient flow formulation is the so called \emph{energy-dissipation} estimate
\begin{equation}\label{eq:GFeed}
 \pderiv{\cF(u)}{t} =  D\cF(u)\cdot\partial_t u = - D\cF(u) \cdot K_u D\cF(u)=  - \abs{\nabla \cF(u)}^2_u \leq 0 ,
\end{equation}
which corresponds to the second law of thermodynamics for closed systems. In this context the term $\abs{\nabla \cF(u)}^2_u$ is called dissipation.

\subsection{Formalism} \label{S:CGF:Formalism}

In this section we want to introduce our notion of a gradient flow subject to a time dependent constraint. The solution does no longer live on the manifold $\cM$, but for each time $t \geq 0$ the gradient flow has to be an element of a constrained manifold $\cM^\cC$.

Therefore, let $\cC$ be an a-priori given differentiable functional $\cC : \cM \times \R_+ \rightarrow \R$ such that
\begin{align}
D\cC(u,t) \cdot K_u D \cC(u,t) = |\nabla \cC(u,t)|^2_u >0 . \label{e:ass:const:nondeg}
\end{align}
We call a constraint with such a property a nondegenerate constraint and we set $\cM^{\cC}_t := \set{ u \in \cM : \cC(u,t)=0 }$ the time-dependent constrained state space. The constraint is called \emph{stationary} if $\cC(u,t) = \cC(u,0)$ for all $t\in \R_+$.

To define a constrained gradient flow for a stationary constraint,
note first of all that due to the nondegeneracy of $\cC$ from~\eqref{e:ass:const:nondeg}, the gradient $\nabla \cC(u)$ is orthogonal to $T_{u} \cM^\cC$ and $T_{u} \cM^\cC$ is a linear subspace of $T_{u} \cM$  with co-dimension~1.
Let $\pi_\cC$ be the orthogonal projection from $T_{u} \cM$ onto $T_{u} \cM^\cC$. Since $\pi_\cC$ is an orthogonal projection, it is also self-adjoined.
The constrained gradient $\nabla_{\cM^\cC} \cF(u)$ is then the unique element in $T_u\cM^\cC$ such that for all $v \in T_u \cM^\cC$:
\begin{align*}
  g_u(\nabla_\cM \cF(u),v) = g_u(\nabla_{\cM^\cC}  \cF(u),v)
\end{align*}
Employing that $\pi_\cC v = v$ and that $\pi_\cC$ is self-adjoined, we find
\begin{align*}
  g_u(\nabla_{\cM}  \cF(u),v) = g_u(\nabla_{\cM}  \cF(u), \pi_\cC v) = g_u(\pi_\cC \nabla_{\cM}  \cF(u),v)
\end{align*}
and hence, since $v \in T_u \cM^\cC$ was arbitrary,
\begin{align*}
  \nabla_{\cM^\cC}  \cF(u) = \pi_\cC \nabla_\cM\cF(u).
\end{align*}
Therefore, a curve $u: \R_+ \rightarrow \cM$ is called constrained gradient flow with respect to the stationary, nondegenerate constraint $\cC: \cM \rightarrow \R$, if for all $t\geq 0$:
\begin{align*}
  \partial_t u = - \nabla_{\cM^\cC} \cF (u(t)) .
\end{align*}
The additional difficulty in the case of a dynamical constraint is that the orthogonal projection of $\nabla_{\ctM} \ctF (u,t) $ onto $T_{(u,t)} \ctM^\cC$ does not necessarily keep the flow congruent to the dynamical constraint. To keep them synchronized, we introduce an extended state space incorporating the time as an additional coordinate. This approach resembles the basic transformation of a nonautonomous ordinary differential equation to an autonomous one by adding the time as additional coordinate. Hence, let us define the \emph{extended} state manifold $\ctM := \cM \times \R$ and for two elements $(s_1,c_1),(s_1,c_1)\in \ctM$ the formal metric
\begin{align*}
g_{(u,t)}^{\ctM}\bra[\big]{(s_1,c_1), (s_2,c_2)} := g_u^\cM(s_1, s_2) + (c_1 , c_2)_\R .
\end{align*}
Then $\ctM^\cC$ is given by $\ctM \cap \cC^{-1}(\{0\}) = ((\cC(\cdot,t))^{-1}(\{0\}),t)_{t \in \R}$ and the tangent space of $\ctM^\cC$ is given by
\begin{align*}
T_{(u,t)}\ctM^\cC = T_u\cM \cap \bra[\big]{\nabla_\cM \cC(u,t) \times \partial_t \cC(u,t)}^\perp
\end{align*}
In order to lift $\cF$ onto $\ctM$ we define
\begin{align*}
\ctF : \cM \times \R \rightarrow \R, \quad\text{with}\quad  (u,t) \mapsto \cF(u)+t
\end{align*}
and hence
\begin{align*}
\nabla_{\ctM} \ctF (u,t) = (\nabla_\cM \cF(u),1) .
\end{align*}
Therefore, a constrained gradient should satisfy:
\begin{enumerate}[1)]
	\item $\pi_t \nabla_{\ctM^\cC} \ctF =1$
	\item $\nabla_{\ctM^\cC} \ctF(u,t) \in T_{(u,t)} \ctM^\cC \Leftrightarrow \nabla_{\ctM^\cC} \ctF(,t) \perp (\nabla_\cM \cC(u,t), \partial_t \cC(u,t))$
	\item For all $v \in T_u \cM^{\cC(\cdot,t)}$ that is $v \in T_u\cM$ such that $v \perp \nabla_\cM \cC(\cdot,t)$ holds
	\begin{equation}
      g^{\ctM}_{(u,t)}\bra*{\nabla_{\ctM^\cC} \ctF(u,t),(v,0)} = g^\cM_u\bra*{\nabla_\cM \cF(u),v}
	\end{equation}
\end{enumerate}
  Under these premises and the nondegenerate assumption~\eqref{e:ass:const:nondeg} on the constraint, the only possible definition of the constrained gradient flow is the projection of $\nabla_{\ctM} \ctF$ along $(\nabla_\cM \cC(u,t),0 )$ onto $(\nabla_\cM \cC(u,t), \partial_t \cC(u,t))^\perp$.
  Doing so we get
  \begin{align*}
    \nabla_{\ctM^\cC} \ctF(u,t) &= \nabla_{\ctM} \ctF(u,t ) - \sigma(u,t) (\nabla_\cM \cC(u,t),0)  \\
    &= \bra[\Big]{\nabla_\cM \cF{(u)}-\sigma(u,t) \nabla_\cM \cC(u,t),1}
  \end{align*}
  where
  \begin{align}\label{e:CGF:sigma}
    \sigma(u,t):= \frac{g^\cM_u(\nabla_\cM \cF(u),\nabla_\cM \cC(u,t)+\partial_t \cC(u,t)}{|\nabla_\cM \cC(u,t)|_u^2} .
  \end{align}
Hence, we arrive at the following definition for the gradient flow in the case of a dynamical constraint:
\begin{defn}[Dynamically constrained gradient flow] \label{def_dynamic_constraint_grad_flow}
	A curve $u: \R_+ \rightarrow \cM$ is called constrained gradient flow with respect to the nondegenerate dynamical constraint $\cC: \cM \times \R_+ \rightarrow \R$, if for all $t\geq 0$:
	\begin{align}\label{e:def:DynCGF}
	\partial_t u = - \nabla_\cM \cF(u(t))  + \sigma(u(t),t) \; \nabla_\cM \cC(u(t),t)) ,
	\end{align}
	where the Lagrange multiplier $\sigma$ is given by~\eqref{e:CGF:sigma}.
\end{defn}

\subsection{Formal derivation as constrained gradient flow in \texorpdfstring{$(\mathcal{P}_2(\R),W_2)$}{Wasserstein space}}\label{s:GF:cFP:formal}
In this section, we formally show, that \eqref{e:cFP} can be seen as gradient flow with respect to the free energy functional $\cF$ as defined in~\eqref{e:def:freeEnergy} satisfying the constraint~\eqref{lagrange_multiplier}. In the following discussion the parameter $\nu$ is set to one.
The constraint given in terms of a functional $\mathcal{C}: \mathcal{P}_2(\R)\times \R_+ \rightarrow \R$ such that $\cC(\rho,t)=0$ reads
\begin{align}
 \cC(\rho,t) \mapsto M_1(\rho) -\ell(t) \qquad\text{with}\qquad M_1(\rho ) =\int x \rho(x) \dx{x} .
\end{align}
The metric is induced by the Wasserstein distance defined on the space of absolutely continuous probability measure with finite second moment $\cP_2(\R)$ defined by
\begin{equation}
  W_2^2(\rho_0,\rho_1) = \inf_{\pi\in \Pi(\rho_0,\rho_1)} \set*{ \iint \abs{x-y}^2 \; \pi(\dx{x},\dx{y}) },
\end{equation}
where $\Pi(\rho_0,\rho_1)$ is the set of couplings between $\rho_0$ and $\rho_1$, i.e. probability measures on $\R \times \R$ with marginals $\rho_0$ and $\rho_1$, respectively.
Furthermore it holds the dynamical representation~\cite{BenamouBrenier}:
\begin{align*}
  W_2^2(\rho_0,\rho_1) = \inf\biggl\{ \int_0^1 \int \abs{\partial_x v(t,x)}^2 \rho(t,\dx{x}) \dx{t} : &\partial_t \rho(t) = K_{\rho(t)} v(t) , \\ &\rho(0) = \rho_0 , \rho(1)=\rho_1 \biggr\},
\end{align*}
where the Onsager operator $K_\rho$ is defined by $K_\rho v = -\partial_x\bra*{ \rho \, \partial_x v}$ in the weak sense.
The differential of the free energy is given by $D\cF \cdot s = \int \bra*{\log \rho(t) + 1 + H} s \dx{x}$. Moreover, we evaluate
\begin{equation}\label{e:CGF:nablaC}
  \abs{\nabla \cC(\rho,t)}^2_{\rho} = \int \partial_x x \,  \partial_x x\,  \rho(x) \dx{x} = \int \rho(x) \dx{x} = 1 ,
\end{equation}
which satisfies the nondegeneracy assumption~\eqref{e:ass:const:nondeg}. Hence, $\sigma(t)$ in~\eqref{e:CGF:sigma} becomes
\begin{equation}
  \sigma(t) = \int x \partial_x \bra*{ \rho(t) \partial_x \bra*{\log(\rho(t,x)) + 1 + H(x)}} - \dot\ell(t) = \int H'(x) \rho(x) \dx{x} - \dot\ell(t),
\end{equation}
which is $\sigma(t)$ as defined in~\eqref{lagranian_multiplier_in_introduction}. Hence, we obtain from Definition \ref{def_dynamic_constraint_grad_flow}
\begin{align}
  \partial_t \rho(t) &=  \partial_x \bra[\big]{ \rho(t) \partial_x \bra*{ \log \rho(t) + 1 + H} - \partial_x\bra*{\Lambda(t) x}} \\
  &= \partial_{x} \bra*{ \partial_x \rho(t) + \bra*{ H' - \sigma(t)}  \rho(t) } ,
\end{align}
which is nothing else than~\eqref{e:cFP}.

\section{The time discrete scheme and existence of weak solutions}\label{S:time_discrete}
In this section, we make the discussion of Section~\ref{s:GF:cFP:formal} rigorous by showing existence of weak solutions of \eqref{e:cFP} with constraint \eqref{lagrange_multiplier} by using a variational implicit Euler scheme based on the
constrained gradient flow. For the existence, the parameter $\nu$ is set to one.

First, let us fix the assumptions throughout this section and define weak solutions for the constrained Fokker-Planck equations.
\begin{assume}\label{assume:TD}
The function $H\in C^3(\R;\R_+)$ has at most quadratic growth at infinity such that for some $C<\infty$ and all $x\in \R$
\begin{equation}\label{assume:TD:H:growth}
\begin{split}
  &H(x) \leq C (1+ \abs{x}^2) ,\qquad \abs{H'(x)}\leq C (1+\abs{x}), \\
  &\abs{H''(x)} \leq C \qquad\text{and}\qquad \abs{H'''(x)} \leq \frac{C}{1+\abs{x}} .
\end{split}
\end{equation}
The dynamical constraint is Lipschitz, i.e.~$\ell \in W^{1,\infty}(\R_+; \R)$, and the initial data satisfies $\rho^0 \in \cP_2(\R)$ and $\cF(\rho^0)< \infty$.
\end{assume}
\begin{defn}[weak solutions]\label{definition_of_weak_solutions}\label{def:weak_sol}
We say that \(\rho\) is a weak solution of \eqref{e:cFP} and \eqref{lagrange_multiplier} on \( [0,T) \times \R \) for $T>0$, if \(\rho(0) = \rho^0 \) and for a.e. \( t \in (0,T)\) holds \( \rho(t) \in \cM^{\ell(t)} \) with
\begin{equation}\label{def:constraint:mfd}
\cM^{\ell(t)} := \set[\bigg]{\rho \in \mathcal{P}_2(\R) \ \Big|\ \ell(t) = \int x \, \rho(x) \dx{x} }
\end{equation}
as well as \(\rho\) is a distributional solution of \eqref{e:cFP}, that is for all $\zeta \in C_c^\infty([0,T) \times \R)$ holds
\begin{align}
\begin{split}
0 = &- \int_\R  \zeta(x,0) \rho^0(x)\dx{x}
 - \int_0^T  \int_\R \rho(t,x) \partial_t \zeta(t,x)\dx{x}\dx{t} \\
&+ \int_0^T \int_{\R} \rho(t,x) \left ( \left (H'(x) -\sigma(t) \right )\partial_x \zeta(t,x) -\partial_{xx} \zeta(t,x) \right )\dx{x}\dx{t}.
\end{split}
\end{align}
\end{defn}
For the proof of existence of a weak solution of \eqref{e:cFP} with constraint \eqref{lagrange_multiplier} we follow mainly the ideas in \cite{JKO98} which are to use a Wasserstein gradient flow with respect to the free energy functional \(\mathcal{F}\)~\eqref{e:def:freeEnergy}. This gradient flow is carried out in a time discrete manner for arbitrary but fixed time-step length $h$ and leading thereby to a sequence of piecewise constant approximations $\rho_h(t,x)$ of the solution. Finally the limit $h \rightarrow 0$ is taken and it is proven that the limit $\rho(t,x)$ actually is a weak solution of \eqref{e:cFP} with constraint \eqref{lagrange_multiplier}.
The main additional difficulty in comparison to \cite{JKO98}, is the need for additional estimates on the Lagrange multiplier and the second moment.

\subsection{The Euler scheme}

Since the metric and Onsager operator $K_u$ are induced by the Wasserstein distance (cf.\ also \cite{JKO98,AGS05}), we use the following time-discrete variational approximation. Let $h>0$ be a fixed time step and consider the following constrained implicit Euler scheme
\begin{equation}\label{discrete_scheme}
\rho^k := \argmin\limits_{\rho \in \cM^{\ell(kh)}}\bra*{ \tfrac{1}{2} W_2^2( \rho^{k-1} , \rho) +h \mathcal{F}(\rho) } .
\end{equation}
In the following, we often investigate the entropy and potential energy inside of the free energy~\eqref{e:def:freeEnergy} separately and write $\cF(\rho) = S(\rho) + E(\rho) + \log Z_0$ with
\begin{equation}
  S(\rho) := \int \rho(x) \log \rho(x) \dx{x} \qquad\text{and}\qquad E(\rho) := \int H(x) \rho(x) \dx{x} .
\end{equation}
First of all we show the well-posedness of the scheme.
\begin{prop}[Well-posedness of the scheme]\label{Proposition_well_posedness_of_the_scheme}
Given $\rho^0 \in \cM^{\ell(0)}$, there is a unique sequence $(\rho^k)_{k \in \mathbb{N}}$ satisfying the scheme \eqref{discrete_scheme}.
\end{prop}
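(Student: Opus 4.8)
The plan is to argue by induction over $k$, applying the direct method of the calculus of variations to the functional
\begin{equation}
  \cJ_k(\rho) := \tfrac{1}{2} W_2^2(\rho^{k-1},\rho) + h\,\cF(\rho), \qquad \rho \in \cM^{\ell(kh)},
\end{equation}
in the spirit of \cite{JKO98}. The induction hypothesis is that $\rho^{k-1} \in \cP_2(\R)$ with $\cF(\rho^{k-1}) < \infty$, which for $k=1$ is exactly Assumption~\ref{assume:TD}. First I would check that $\cJ_k$ is proper, i.e.\ that $\cM^{\ell(kh)}$ is non-empty and $\cJ_k \not\equiv +\infty$ on it: translating $\rho^{k-1}$ by the constant $\ell(kh) - M_1(\rho^{k-1})$ produces a competitor $\tilde\rho \in \cM^{\ell(kh)}$ with $S(\tilde\rho) = S(\rho^{k-1})$ (translation invariance of the entropy), finite second moment, and $E(\tilde\rho) < \infty$ by the at most quadratic growth of $H$ from~\eqref{assume:TD:H:growth}, hence $\cJ_k(\tilde\rho) < \infty$.

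Next, for a minimizing sequence $(\rho_n)_n$ I would derive a uniform bound on the second moments $M_2(\rho_n) := \int x^2 \rho_n(x)\dx{x}$. Combining $M_2(\rho_n) \leq 2\,W_2^2(\rho^{k-1},\rho_n) + 2\,M_2(\rho^{k-1})$ with the Gaussian lower bound $S(\rho) \geq -\tfrac{1}{2}\log(\pi/\alpha) - \alpha\,M_2(\rho)$, valid for every $\alpha>0$ (it is just $\cH(\rho\,|\,g_\alpha)\geq 0$ for a centred Gaussian $g_\alpha$), choosing $\alpha$ small relative to $h$, and using $E \geq 0$ since $H \geq 0$, the bound $\cJ_k(\rho_n) \leq C$ yields $\sup_n M_2(\rho_n) < \infty$. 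By Prokhorov's theorem $(\rho_n)_n$ is then narrowly precompact; let $\rho_n \rightharpoonup \rho$ narrowly along a subsequence. The map $\rho \mapsto W_2^2(\rho^{k-1},\rho)$ and the entropy $S$ are narrowly lower semicontinuous, and so is $E$ since $H \geq 0$ is continuous, whence $\cJ_k(\rho) \leq \liminf_n \cJ_k(\rho_n)$. Moreover the uniform second-moment bound makes $x$ uniformly integrable against $(\rho_n)_n$ (since $\int_{\abs{x}>R} \abs{x}\,\rho_n \leq R^{-1}\sup_n M_2(\rho_n)$), so $M_1(\rho_n) \to M_1(\rho)$ and therefore $\rho \in \cM^{\ell(kh)}$. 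Thus $\rho$ is a minimizer, and $\cF(\rho) \leq \cJ_k(\rho)/h < \infty$ while $\cF(\rho) \geq -\tfrac{1}{2}\log(\pi/\alpha) - \alpha M_2(\rho) + \log Z_0 > -\infty$, so $\rho \in \cP_2(\R)$ with $\cF(\rho)$ finite, closing the induction.

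Uniqueness follows from strict convexity along linear interpolations $\rho_s := (1-s)\rho_0 + s\rho_1$, $s\in[0,1]$: the set $\cM^{\ell(kh)}$ is convex because the constraint $M_1(\rho) = \ell(kh)$ is linear; $\rho \mapsto W_2^2(\rho^{k-1},\rho)$ is convex (convex-combine optimal plans, which leaves the first marginal $\rho^{k-1}$ fixed); $E$ is linear; and $S$ is strictly convex since $t\mapsto t\log t$ is; hence $\cJ_k$ is strictly convex on $\cM^{\ell(kh)}$ and has at most one minimizer. I expect the only point genuinely beyond the standard JKO machinery to be the handling of the constraint — non-emptiness of $\cM^{\ell(kh)}$ together with the existence of a finite-energy competitor, closedness of $\cM^{\ell(kh)}$ under narrow limits of second-moment-bounded sequences, and its convexity — all of which are direct consequences of the at most quadratic growth of $H$.
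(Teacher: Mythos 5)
Your proposal is correct and follows essentially the same route the paper indicates: apply the JKO direct method (obtaining a uniform second-moment bound along a minimizing sequence, then using it to show the first-moment constraint survives the narrow limit), and conclude uniqueness from strict convexity of the entropy together with convexity of $\cM^{\ell(kh)}$. The paper only sketches this argument by reference to \cite[Proposition 4.1]{JKO98} and \cite[Proposition 4.2]{thesis}, explicitly singling out the preservation of the first moment along the minimizing sequence as the one additional step, and your write-up supplies precisely the details that fill that sketch.
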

The proof mainly follows the respective proof in \cite[Proposition 4.1]{JKO98} which is using the direct method to show existence and exploiting the strict convexity of the functional \(\mathcal{F}\) and the convexity of $\cM^{\ell(kh)}$ for the uniqueness. The only additional step is to show, that the first moment is preserved along the minimizing sequence in the direct method is preserved (cf.~\cite[Proposition 4.2]{thesis}).

\subsection{Passing to the Limit}
In this section we show that a constant in time interpolation of the solution of the discrete scheme \eqref{discrete_scheme}  leads to a weak solution of~\eqref{e:cFP} with constraint \eqref{lagrange_multiplier}. Again we follow the structure of the respective proof outlined in \cite[Theorem 5.1]{JKO98}. However, the additional constraint~\eqref{lagrange_multiplier} on the first moment leads to the rise of a Lagrange multiplier~\eqref{lagranian_multiplier_in_introduction} which needs to be extracted from the discrete scheme~\eqref{discrete_scheme}.
\begin{thm}[Existence of a weak solution]\label{existence_of_weak_solutions}
Suppose Assumption~\ref{assume:TD} holds. For fixed $h>0$, let $(\rho_h^k)_{k \in \mathbb{N}}$  be the solution of the scheme \eqref{discrete_scheme}.  Define the constant interpolation $\rho_h: (0, \infty) \times \R \rightarrow [0, \infty)$ by
\begin{equation}\label{e:def:const:interpolation}
\rho_h(t,x) = \rho_h^k(x) \quad \text{ for } t \in [kh, (k+1)h) \text{ and } k \in \mathbb{N} .
\end{equation}
Then for any $T>0$ and $h \to 0$
\begin{equation}\label{weak_L1_convergence_of_the_constant_interpolation}
\rho_h  \rightharpoonup  \rho \quad \text{ weakly in } L^1((0,T)\times \R)
\end{equation}
and $\rho(t, \cdot) \in \cM^{\ell(t)}$ is a weak solution of~\eqref{e:cFP} with constraint~\eqref{lagrange_multiplier}.
Moreover it holds
\begin{equation}\label{uniform_bound_on_second_moment_and_energy}
M_2(\rho) , E(\rho) \in L^\infty((0,T)) .
\end{equation}
\end{thm}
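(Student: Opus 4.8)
The plan is to make the formal derivation of Section~\ref{s:GF:cFP:formal} rigorous via the time-discrete variational scheme of \cite{JKO98}, supplying the two ingredients that are genuinely new here: an $h$-uniform control of the second moment, and the extraction of the discrete Lagrange multiplier from~\eqref{discrete_scheme}. \emph{A priori estimates.} Since $\rho_h^{k-1}\in\cM^{\ell((k-1)h)}$ need not lie in $\cM^{\ell(kh)}$, I would first translate it by $a_k:=\ell(kh)-\ell((k-1)h)$ to obtain a competitor $\tilde\rho_h^{k-1}\in\cM^{\ell(kh)}$, which costs $W_2^2(\rho_h^{k-1},\tilde\rho_h^{k-1})=a_k^2\le\norm{\dot\ell}_\infty^2 h^2$ and, by the growth bounds~\eqref{assume:TD:H:growth} on $H$, changes the free energy by at most $C h\,(1+M_2(\rho_h^{k-1})^{1/2})$. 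Inserting $\tilde\rho_h^{k-1}$ into~\eqref{discrete_scheme}, using $\cF\ge0$ and $\cF(\rho^0)<\infty$ (Assumption~\ref{assume:TD}), and summing over $k$ with $kh\le T$ yields
\[
  \sum_{k}\tfrac{1}{2h}\,W_2^2(\rho_h^{k-1},\rho_h^k)+\cF(\rho_h^k) \le \cF(\rho^0)+C(T)\Bigl(1+\sup_{jh\le T} M_2(\rho_h^j)^{1/2}\Bigr).
\]
Since $M_2(\rho)^{1/2}=W_2(\rho,\delta_0)$, the triangle inequality and Cauchy--Schwarz give $M_2(\rho_h^k)^{1/2}\le M_2(\rho^0)^{1/2}+\bigl(\tfrac{T}{h}\sum_j W_2^2(\rho_h^{j-1},\rho_h^j)\bigr)^{1/2}$; combining this with the displayed bound produces a self-improving inequality $D\le A+B\sqrt D$ for $D:=\sum_j\tfrac1{2h}W_2^2(\rho_h^{j-1},\rho_h^j)$ with $A,B$ depending only on $\rho^0,H,\ell,T$. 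Hence $D$, and therefore $\sup_{kh\le T}M_2(\rho_h^k)$, $\sup_{kh\le T}\cF(\rho_h^k)$ and $\sup_{kh\le T}E(\rho_h^k)$, are bounded uniformly in $h$.

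\emph{Discrete Euler--Lagrange equation and the multiplier.} For $\xi\in C_c^\infty(\R)$, perturbing $\rho_h^k$ along the flow of $\xi$ and applying the Lagrange-multiplier principle on the affine constraint manifold $\cM^{\ell(kh)}$, with the first variation of $W_2^2$ along the optimal plan $\pi_k$ between $\rho_h^{k-1}$ and $\rho_h^k$ and of $\cF$ computed as in \cite{JKO98}, yields the stationarity relation; testing it with translations ($\xi\equiv1$) identifies the multiplier and gives
\[
 \frac1h\int (y-x)\,\xi(y)\,\pi_k(\dx{x},\dx{y}) = \int \partial_x\xi\;\rho_h^k\,\dx{x} - \int\bigl(H'-\sigma_h^k\bigr)\,\xi\,\rho_h^k\,\dx{x},
\]
with $\sigma_h^k:=\int H'\rho_h^k\,\dx{x}+\tfrac1h\bigl(\ell(kh)-\ell((k-1)h)\bigr)$, the discrete analogue of~\eqref{lagranian_multiplier_in_introduction}. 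From $\abs{H'(x)}\le C(1+\abs{x})$ and the uniform $M_2$-bound, $\sup_{kh\le T}\abs{\sigma_h^k}\le C(T)$ uniformly in $h$.

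\emph{Compactness and passage to the limit.} The uniform bounds on $S(\rho_h)$ and $M_2(\rho_h)$ give equi-integrability (de la Vall\'ee--Poussin) and tightness of $\{\rho_h\}$, hence~\eqref{weak_L1_convergence_of_the_constant_interpolation} along a subsequence with $\rho\ge0$; moreover $\sum_k W_2^2(\rho_h^{k-1},\rho_h^k)=O(h)$ gives an $h$-uniform $\tfrac12$-H\"older estimate in $W_2$ for a suitable interpolant, so by a standard argument (cf.\ \cite{JKO98}) $\rho_h(t)\to\rho(t)$ in $W_2$ for a.e.\ $t$ along a further subsequence, and (using the uniform $M_2$-bound) all moments of order $<2$ converge for a.e.\ $t$; in particular $\int x\,\rho_h(t)\,\dx{x}=\ell(\lfloor t/h\rfloor h)\to\ell(t)$ forces $\rho(t)\in\cM^{\ell(t)}$ for a.e.\ $t$. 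The piecewise-constant $\sigma_h$ is bounded and converges weakly-$\ast$; by the a.e.-in-$t$ convergence of moments its limit is $\sigma(t)=\int H'\rho(t)\,\dx{x}+\dot\ell(t)$, i.e.~\eqref{lagranian_multiplier_in_introduction}. For the distributional identity of Definition~\ref{def:weak_sol} I would take $\xi=\partial_x\zeta(kh,\cdot)$ in the Euler--Lagrange relation, multiply by $h$ and sum over $k$: a Taylor expansion replaces $\int(y-x)\partial_x\zeta(kh,y)\,\pi_k$ by $\int\zeta(kh,\cdot)\,\dx{\rho_h^k}-\int\zeta(kh,\cdot)\,\dx{\rho_h^{k-1}}$ up to an error bounded by $\tfrac12\norm{\partial_{xx}\zeta}_\infty W_2^2(\rho_h^{k-1},\rho_h^k)$, whose sum is $\le C\sum_k W_2^2(\rho_h^{k-1},\rho_h^k)\le Ch\to0$; an Abel summation of the main term converges (using $\rho_h\rightharpoonup\rho$ and the compact support of $\zeta$) to $-\int\zeta(0,\cdot)\rho^0-\int_0^T\!\int\rho\,\partial_t\zeta$, while the right-hand side is a Riemann sum converging (using the a.e.-$t$ convergence and the uniform bounds) to $\int_0^T\!\int\rho\,(\partial_{xx}\zeta-(H'-\sigma)\partial_x\zeta)$; this is exactly Definition~\ref{def:weak_sol}. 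Finally~\eqref{uniform_bound_on_second_moment_and_energy} follows from the $h$-uniform bounds of the a priori estimates and the lower semicontinuity of $M_2$, hence of $E$.

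\emph{Main obstacle.} I expect the delicate point to be the a priori estimates: the second-moment bound can only be closed through the self-improving inequality, which requires using $\cF\ge0$ together with the $W_2$-increment bound, and it must be in place \emph{before} the multiplier $\sigma_h^k$ can be controlled; moreover the translations needed to make competitors and perturbations compatible with the moving constraint introduce $O(h)$ remainders --- in the free energy, in the multiplier, and in the Euler--Lagrange relation --- that have to be shown negligible after summation, which is exactly where the growth assumptions~\eqref{assume:TD:H:growth} and the bound $\sum_k W_2^2(\rho_h^{k-1},\rho_h^k)=O(h)$ are used.
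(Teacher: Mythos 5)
Your proposal follows the paper's high-level strategy (the JKO scheme~\eqref{discrete_scheme}, translation of $\rho_h^{k-1}$ to meet the moving constraint, extraction of the multiplier from the Euler--Lagrange relation, passage to the limit), but the two key technical ingredients are handled differently. For the second-moment bound, the paper does \emph{not} close the estimate with your self-improving inequality $D\le A+B\sqrt D$; instead it first bounds $\sum_k W_2^2(\rho_h^{k-1},\rho_h^k)$ in terms of $\sum_k M_2(\rho_h^k)$, then tests the discrete weak formulation of Lemma~\ref{lem:time-discrete-weak} with $\zeta=x^2$ to get a difference-quotient bound on $M_2$, and closes by a discrete Gronwall argument. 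Your shortcut hinges on $\cF\ge0$, which requires $Z_0=\int e^{-H}<\infty$; Assumption~\ref{assume:TD} only gives upper growth bounds on $H$ and hence does not guarantee this, so you should replace $\cF\ge0$ by the entropy lower bound $S(\rho)\ge -C(M_2(\rho)+1)^\alpha$ with $\alpha\in(\tfrac13,1)$ used in the paper (this keeps your inequality self-improving since $D^\alpha$ with $\alpha<1$ is still sublinear). For the Lagrange multiplier, the paper proves uniform convergence of $\sigma_h$ by Arzel\`a--Ascoli (Lemma~\ref{lem:time-discrete-sigma}), which is where the $H'''$ decay hypothesis in~\eqref{assume:TD:H:growth} is used essentially. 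Your route --- a.e.-in-$t$ narrow convergence of $\rho_h(t)$ (via the $W_2$-equicontinuity $\sum_k W_2^2=O(h)$ and a diagonal argument, as in \cite{JKO98}), hence a.e. convergence of the moments and of $\sigma_h$, then dominated convergence in the pairing $\int_0^T\sigma_h\!\int\partial_x\zeta\,\rho_h$ using $\int\rho_h(t)=1$ --- is a genuine alternative that sidesteps the Lipschitz-in-time estimate on $\sigma_h$ and would not need the $H'''$ control; on the other hand, it requires you to carry out the JKO diagonal compactness argument explicitly rather than just $L^1((0,T)\times\R)$-weak convergence, whereas the paper's Arzel\`a--Ascoli route lets the weak space-time limit and the uniform-in-time multiplier limit be handled independently. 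Both approaches work; yours is somewhat lighter on structural assumptions (once the $\cF\ge0$ gap is repaired), the paper's is more self-contained within the weak-$L^1$ framework.
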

\begin{rem}[Regularity, energy dissipation and uniqueness]
The regularity of the solutions constructed in Theorem~\ref{existence_of_weak_solutions} can be improved.
The only difference  to the unconstrained case is the Lagrange multiplier $\sigma$. However, the uniform bounds for \(M_2(\rho)\) and \(\sigma\) already contained
in \cite[Appendix A Proposition 2]{HNV14} (cf.\ also Lemma~\ref{lem:bound:M2sigma})
ensure that we are able to apply standard regularity results for the Fokker-Planck equation (cf.~\cite{JKO98} and \cite[Chapter 5]{thesis}) to obtain
\begin{equation}\label{regularity}
   \rho \in L^2_{\loc}(\R_+; H^2_{\loc}(\R)) \cap H^1_{\loc}(\R_+ ; L^2_{\loc}(\R)) .
\end{equation}
The regularity can be further improved under stronger assumptions on the potential $H$ and external constraint $\ell$ (cf.~\cite[Theorem 5.2]{thesis} for a detailed statement).

Using the improved regularity properties~\eqref{regularity} a chain rule is established, which rigorously shows the energy-dissipation identity~\eqref{e:freeEnergyDissipation}. Similarly, the improved regularity is sufficient to establish uniqueness by a comparison argument. Due to the nonlocal nature of the equation, the strategy for the uniqueness proof
in~\cite{JKO98} has to be modified. Instead of proving uniqueness for the solutions itself, by following the idea of \cite{HNV14} one considers the distribution function, which allows for a comparison principle (cf.~\cite[Chapter 6]{thesis}).
\end{rem}
The proof of Theorem~\ref{existence_of_weak_solutions} is based on the following three Lemmas, which are proved separately in the next section. The first one provides an approximate weak formulation of the Fokker-Planck equation~\eqref{e:cFP} with constraint~\eqref{lagrange_multiplier}.
\begin{lem}[Time-discrete approximation of the weak formulation]\label{lem:time-discrete-weak}
The solution $(\rho_h^k)_{k\in \N}$ to the discrete scheme~\eqref{discrete_scheme} satisfies for all $\zeta\in C_c^\infty(\R)$ and all $k\in \N$
\begin{equation}\label{discrete_weak_formulation}
\bigg | \int_\R \frac{\rho^{k}_h-\rho^{k-1}_h}{h} \zeta + \int \bra[\big]{ (H'(x)- \sigma^k_h ) \partial_x  \zeta - \partial_{xx}  \zeta } \rho^k  \bigg | \leq \sup \limits_\R \frac{| \partial_{xx} \zeta |}{2} \  \frac{1}{h} \, W_2^2(\rho^{k-1}_h, \rho^k_h) ,
\end{equation}
where the discrete Lagrange multiplier $\sigma^k_h$ is given by
\begin{equation}\label{easy_version_of_sigma_k_h}
\sigma^k_h = \int_\R H'(x) \rho^k_h(x) \dx{x} +  \frac{1}{h} \int_\R \left ( \rho^k_h(x) - \rho^{k-1}_h(x) \right ) x \dx{x} .
\end{equation}
\end{lem}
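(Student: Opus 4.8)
\emph{Strategy.} The plan is to extract the Euler--Lagrange equation for the minimizer $\rho^k:=\rho^k_h$ of the constrained scheme~\eqref{discrete_scheme} by perturbing it along an admissible one-parameter family of competitors and invoking first-order optimality, and then to convert the resulting identity --- which is naturally phrased in terms of an optimal transport plan --- into the stated weak formulation by a second-order Taylor expansion of the test function. So fix $\zeta\in C_c^\infty(\R)$, put $\xi:=\partial_x\zeta$, and let $(\Phi_\tau)_\tau$ be the flow of $\xi$, i.e.\ $\partial_\tau\Phi_\tau=\xi\circ\Phi_\tau$ with $\Phi_0=\id$; since $\xi$ is Lipschitz with compact support, each $\Phi_\tau$ is a $C^1$-diffeomorphism of $\R$ whose spatial derivative $\Phi_\tau'$ is bounded above and away from zero for $\tau$ near $0$. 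The plain push-forward $(\Phi_\tau)_\#\rho^k$ would in general move the first moment, so I correct it by a constant shift: set $c(\tau):=-\int\bra*{\Phi_\tau(y)-y}\rho^k(y)\dx y$ (finite, as $\Phi_\tau-\id$ is bounded), $\tilde\Phi_\tau:=\Phi_\tau+c(\tau)$ and $\rho_\tau:=(\tilde\Phi_\tau)_\#\rho^k$. Then $\int x\,\rho_\tau(x)\dx x=\int\Phi_\tau(y)\rho^k(y)\dx y+c(\tau)=\int y\,\rho^k(y)\dx y=\ell(kh)$, so $\rho_\tau\in\cM^{\ell(kh)}$; moreover $\rho_\tau\in\cP_2(\R)$ and $\cF(\rho_\tau)<\infty$ because $\tilde\Phi_\tau-\id$ and $\log\tilde\Phi_\tau'=\log\Phi_\tau'$ are bounded and $\rho^k$ has finite second moment, invoking Assumption~\ref{assume:TD}. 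One has $c(0)=0$ and $c'(0)=-\int\xi\,\rho^k\dx y=:-m$, hence $\partial_\tau\big|_{\tau=0}\tilde\Phi_\tau=\xi-m$ and $\partial_\tau\big|_{\tau=0}\log\tilde\Phi_\tau'=\partial_x\xi=\partial_{xx}\zeta$.

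\emph{First-order optimality.} Since $W_2^2(\rho^{k-1},\cdot)$ is only Lipschitz and not differentiable, I dominate it by the cost of a fixed, generally non-optimal plan: let $\pi$ be an optimal coupling of $\rho^{k-1}$ and $\rho^k$ and set $\tilde A(\tau):=\iint\abs{x-\tilde\Phi_\tau(y)}^2\,\pi(\dx x,\dx y)$, so that $\tilde A(\tau)\ge W_2^2(\rho^{k-1},\rho_\tau)$ with equality at $\tau=0$. Then, by minimality of $\rho^k$ and admissibility of $\rho_\tau$,
\begin{equation*}
 \tilde G(\tau):=\tfrac12\tilde A(\tau)+h\cF(\rho_\tau)\ge\tfrac12 W_2^2(\rho^{k-1},\rho_\tau)+h\cF(\rho_\tau)\ge\tfrac12 W_2^2(\rho^{k-1},\rho^k)+h\cF(\rho^k)=\tilde G(0) ,
\end{equation*}
so $\tau=0$ is an interior minimum of $\tilde G$. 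Both $\tilde A$ and $\tau\mapsto\cF(\rho_\tau)=S(\rho^k)-\int\rho^k\log\tilde\Phi_\tau'\dx y+\int H(\tilde\Phi_\tau(y))\rho^k(y)\dx y+\log Z_0$ are $C^1$ near $0$ (differentiation under the integral being justified by the boundedness just noted, the linear growth of $H'$ from Assumption~\ref{assume:TD}, and the finiteness of the second moments of $\rho^k$ and of $\pi$), hence $\tilde G'(0)=0$; writing out the three first variations $\tilde A'(0)=2\iint(y-x)(\xi(y)-m)\pi$, $\partial_\tau|_0 S=-\int\rho^k\partial_{xx}\zeta$, $\partial_\tau|_0 E=\int H'(\xi-m)\rho^k$ and dividing by $h$ gives
\begin{equation*}
 \frac1h\iint(y-x)\bra*{\partial_x\zeta(y)-m}\pi(\dx x,\dx y)-\int\rho^k\,\partial_{xx}\zeta\dx x+\int H'\bra*{\partial_x\zeta-m}\rho^k\dx x=0 .
\end{equation*}

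\emph{Extracting $\sigma^k_h$ and the Taylor step.} Multiply this identity by $h$ and expand the terms carrying the factor $m$. Using $\iint(y-x)\,\pi(\dx x,\dx y)=\ell(kh)-\ell((k-1)h)=\int(\rho^k-\rho^{k-1})\,x\dx x$ and $m=\int\partial_x\zeta\,\rho^k\dx x$, those terms combine, by the very definition~\eqref{easy_version_of_sigma_k_h} of $\sigma^k_h$, into $-h\int\sigma^k_h\,\partial_x\zeta\,\rho^k\dx x$, and one is left with
\begin{equation*}
 \iint(y-x)\,\partial_x\zeta(y)\,\pi(\dx x,\dx y)=h\int\rho^k\,\partial_{xx}\zeta\dx x-h\int\bra*{H'-\sigma^k_h}\partial_x\zeta\,\rho^k\dx x .
\end{equation*}
On the other hand $\int(\rho^k-\rho^{k-1})\zeta\dx x=\iint\bra[\big]{\zeta(y)-\zeta(x)}\pi(\dx x,\dx y)$, and a second-order expansion of $\zeta$ about $y$ yields
\begin{equation*}
 \abs*{\int(\rho^k-\rho^{k-1})\zeta\dx x-\iint(y-x)\,\partial_x\zeta(y)\,\pi(\dx x,\dx y)}\le\frac{\sup_\R\abs{\partial_{xx}\zeta}}{2}\iint(x-y)^2\,\pi=\frac{\sup_\R\abs{\partial_{xx}\zeta}}{2}\,W_2^2(\rho^{k-1},\rho^k) .
\end{equation*}
Substituting the previous display and dividing by $h$ is precisely~\eqref{discrete_weak_formulation}.

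\emph{Main obstacle.} The genuinely new ingredient over the unconstrained \cite{JKO98} computation is the constant shift $c(\tau)$: it has to be tuned so that the first moment is preserved exactly, it is invisible to the entropy (a translation has unit Jacobian, whence $\log\tilde\Phi_\tau'=\log\Phi_\tau'$), but it displaces the potential-energy and transport first variations by multiples of $m=\int\partial_x\zeta\,\rho^k$, and it is exactly this displacement that assembles into the discrete Lagrange multiplier $\sigma^k_h$. The only other point requiring care is justifying $\tilde G'(0)=0$ despite the non-smoothness of $W_2$, which the one-sided comparison against the fixed plan $\pi$ takes care of; the change-of-variables formulas, the differentiation under the integral sign, and the Taylor estimate are then routine under Assumption~\ref{assume:TD}.
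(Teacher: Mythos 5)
Your proof is correct and follows the same overall strategy as the paper --- perturb the minimizer $\rho^k_h$ along a constraint-preserving one-parameter family, derive the Euler--Lagrange identity in terms of an optimal plan, and convert to the stated weak formulation by a second-order Taylor expansion of $\zeta$ --- but you implement the constraint more directly and economically. The paper introduces a second vector field $\eta\in C_c^\infty(\R)$ with $\int\eta\,\rho^k\neq 0$, builds the two-parameter family $\rho^k_{\alpha,\beta}=\Psi_{\beta\#}(\Phi_{\alpha\#}\rho^k)$, invokes the implicit function theorem to find $\beta=i(\alpha)$ enforcing $M_1=\ell(kh)$, and only afterwards passes to the constant vector field $\eta\to 1$ via a cutoff approximation $\chi_M(x)=x\varphi_M(x)$, $M\to\infty$, to obtain the explicit $\sigma_h^k$ in~\eqref{easy_version_of_sigma_k_h}. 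You replace all of this by the explicit translation correction $c(\tau)=-\int(\Phi_\tau-\id)\,\rho^k$, which (i) preserves the first moment exactly by construction, (ii) has unit Jacobian and so is invisible to the entropy, and (iii) contributes precisely the displacements by $m=\int\partial_x\zeta\,\rho^k$ that assemble into $\sigma_h^k$ --- so neither the implicit function theorem nor the terminal $M\to\infty$ passage is needed. Your treatment of the non-differentiability of $W_2^2$, via the $C^1$ upper bound $\tilde A(\tau)\ge W_2^2(\rho^{k-1},\rho_\tau)$ with $\tilde A(0)=W_2^2(\rho^{k-1},\rho^k)$, also streamlines the paper's two-sided $\limsup$ argument with $\pm\xi$, $\pm\eta$, since the domination plus equality at $\tau=0$ already forces $\tilde G'(0)=0$. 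The remaining integrability and differentiation-under-the-integral checks you invoke are the same as the paper's and are standard under Assumption~\ref{assume:TD}.
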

In the next Lemma, we establish a priori bounds, which allow to pass to the limit.
\begin{lem}[A priori estimates for the discrete scheme]\label{lem:time-discrete-apriori}
Let $(\rho_h^k)_{k\in \N}$ be the solution of the scheme~\eqref{discrete_scheme}.
Then for any $T>0$ there exists $C>0$ such that for all $h \in (0,1)$ and all $N \in \mathbb{N}$ with $Nh \leq T$ the  following a priori estimates hold true
\begin{align}
\sup_{0\leq k\leq N} M_2(\rho_h^k) &\leq C \label{uniform_bound_on_second_moment}\\
\sup_{0\leq k\leq N} \int_\R \max \{\rho_h^N \log\rho_h^N,0 \} \dx{x} &\leq C \label{uniform_bound_on_entropy_positive_part} \\
\sup_{0\leq k\leq N} E(\rho_h^k) &\leq C \label{uniform_bound_on_energy}\\
\sum \limits_{k=1}^N W_2^2(\rho_h^{k-1},\rho_h^k) &\leq Ch \label{uniform_bound_on_distance_sum} .
\end{align}
\end{lem}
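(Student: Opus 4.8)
\emph{The approach.} The plan is to follow the JKO strategy~\cite{JKO98}: derive a one-step inequality from the minimality in~\eqref{discrete_scheme} and then sum it. The only genuinely new point is that the natural competitor $\rho_h^{k-1}$ does not meet the constraint at the new time $kh$; I would fix this by using its translate $\tilde\rho^{k-1}(x):=\rho_h^{k-1}(x-a_k)$ with $a_k:=\ell(kh)-\ell((k-1)h)$, which lies in $\cM^{\ell(kh)}$ and satisfies $\abs{a_k}\le\norm{\dot\ell}_{L^\infty}h$ since $\ell\in W^{1,\infty}$. The entropy is translation invariant, $W_2^2(\rho_h^{k-1},\tilde\rho^{k-1})\le a_k^2$, and from $H(x+a)-H(x)=a\int_0^1 H'(x+sa)\dx{s}$ together with $\abs{H'(x)}\le C(1+\abs{x})$ and $\int\abs{x}\rho_h^{k-1}\dx{x}\le 1+M_2(\rho_h^{k-1})$ one gets $\abs{\cF(\tilde\rho^{k-1})-\cF(\rho_h^{k-1})}=\abs{E(\tilde\rho^{k-1})-E(\rho_h^{k-1})}\le C\abs{a_k}\bra[\big]{1+M_2(\rho_h^{k-1})}$. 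Using $\tilde\rho^{k-1}$ as competitor in~\eqref{discrete_scheme} and $\abs{a_k}\le\norm{\dot\ell}_{L^\infty}h$ then yields the one-step estimate
\[
\tfrac{1}{2} W_2^2(\rho_h^{k-1},\rho_h^k)+h\cF(\rho_h^k)\le h\cF(\rho_h^{k-1})+Ch^2\bra[\big]{1+M_2(\rho_h^{k-1})} .
\]

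\emph{From the one-step estimate to the four bounds.} Summing over $k$ and using $\cF\ge0$ (the normalization by $\log Z_0$), dropping the nonnegative $W_2^2$-terms where convenient, gives both $\sum_{k=1}^N W_2^2(\rho_h^{k-1},\rho_h^k)\le Ch\bra[\big]{1+\max_{j\le N}M_2(\rho_h^j)}$ and $\cF(\rho_h^k)\le\cF(\rho^0)+CT\bra[\big]{1+\max_{j\le N}M_2(\rho_h^j)}$ for $kh\le T$. On the other hand, the triangle inequality for $W_2$ and Cauchy--Schwarz give $M_2(\rho_h^k)\le 2M_2(\rho^0)+2k\sum_{j\le k}W_2^2(\rho_h^{j-1},\rho_h^j)$, so combining with the first bound yields $\max_{j\le N}M_2(\rho_h^j)\le 2M_2(\rho^0)+CT\bra[\big]{1+\max_{j\le N}M_2(\rho_h^j)}$. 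For $T$ small this absorbs into the left-hand side and proves~\eqref{uniform_bound_on_second_moment}; for general $T$ one restarts the argument from $\rho_h^{N_0}$, with $N_0h$ of the order of that small time and $\rho_h^{N_0}$ having controlled second moment and free energy, and iterates over finitely many subintervals. Then~\eqref{uniform_bound_on_distance_sum} and~\eqref{uniform_bound_on_energy} follow at once from the displayed bounds above together with~\eqref{uniform_bound_on_second_moment} and $H(x)\le C(1+\abs{x}^2)$. Finally, for~\eqref{uniform_bound_on_entropy_positive_part}, by~\eqref{uniform_bound_on_second_moment} we now have $\cF(\rho_h^k)\le C$, hence $S(\rho_h^k)\le C$ since $E\ge0$; writing $\int\max\{\rho\log\rho,0\}\dx{x}=S(\rho)+\int_{\{\rho<1\}}\rho\abs{\log\rho}\dx{x}$ and using $\int_{\{\rho<1\}}\rho\abs{\log\rho}\dx{x}\le C\bra[\big]{1+M_2(\rho)}^{1/2}$ (from $-t\log t\le C\sqrt t$ on $(0,1]$ and Cauchy--Schwarz against $(1+x^2)^{-1}$) together with~\eqref{uniform_bound_on_second_moment} gives the claim.

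\emph{Main obstacle.} The crux — and the only real departure from~\cite{JKO98} — is the coupling between~\eqref{uniform_bound_on_second_moment} and~\eqref{uniform_bound_on_distance_sum}: since $H$ is not assumed coercive, the translated competitor raises the free energy by a term controlled only through $M_2(\rho_h^{k-1})$, so $\cF$ need not decrease along the scheme and the clean telescoping available in the unconstrained case is lost. Overcoming this is precisely what forces the absorption / finite-subdivision (discrete Gronwall) argument above. Everything else is a routine adaptation of~\cite{JKO98} and uses only $0\le H(x)\le C(1+\abs{x}^2)$, $\abs{H'(x)}\le C(1+\abs{x})$ and $\ell\in W^{1,\infty}$ from Assumption~\ref{assume:TD}.
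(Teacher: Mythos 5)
Your proof is correct and follows the paper's strategy up to the one-step estimate (same translated competitor $\rho_h^{k-1}(\cdot-a_k)$, same bounds on the resulting $W_2$-cost and free-energy increment), but you take a genuinely different route to close the second-moment bound~\eqref{uniform_bound_on_second_moment}. You estimate $M_2(\rho_h^k)$ via the triangle inequality $\sqrt{M_2(\rho_h^k)}\le\sqrt{M_2(\rho^0)}+\sum_j W_2(\rho_h^{j-1},\rho_h^j)$ and Cauchy--Schwarz, obtaining $M_2(\rho_h^k)\le 2M_2(\rho^0)+2k\sum_j W_2^2$; since feeding the $\sum W_2^2$ bound back in produces a factor $\sim T^2$, you then need an absorption argument on a short time interval followed by a finite-subdivision iteration. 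The paper instead tests the discrete Euler--Lagrange inequality of Lemma~\ref{lem:time-discrete-weak} with $\zeta=x^2$ (after an approximation justified by finiteness of each $M_2(\rho_h^k)$) to obtain a genuine difference-quotient estimate
\begin{equation*}
\frac{1}{h}\bra[\big]{M_2(\rho_h^k)-M_2(\rho_h^{k-1})}\le C\bra[\big]{1+M_2(\rho_h^k)}+\frac{1}{h}W_2^2(\rho_h^k,\rho_h^{k-1}),
\end{equation*}
which after summing and invoking~\eqref{estimate_for_the_distance_sum_refined} closes with a single discrete Gronwall lemma, yielding $M_2(\rho_h^N)\le(M_2(\rho^0)+C)e^{CT}$ uniformly in one step. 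Both arguments are sound; the paper's has the advantage of avoiding the subdivision/restart, and as a byproduct uses the already-available control on the discrete Lagrange multiplier $\sigma_h^k$, whereas yours never needs to touch $\sigma_h^k$ at all. Your derivation of~\eqref{uniform_bound_on_entropy_positive_part} via the elementary bound $-t\log t\le\sqrt t$ on $(0,1]$ and Cauchy--Schwarz against $(1+x^2)^{-1}$ is a transparent substitute for the cited $S(\rho)\ge -C(1+M_2(\rho))^\alpha$ from~\cite{JKO98}; the remaining bounds~\eqref{uniform_bound_on_energy} and~\eqref{uniform_bound_on_distance_sum} you obtain exactly as the paper does.
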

Based on the a priori estimates of Lemma~\ref{lem:time-discrete-apriori}, the only additional difficulty in the passage to the limit in the approximate weak formulation~\eqref{discrete_weak_formulation} is the convergence of the Lagrange multiplier $\sigma^k_h$. Hence, we prove its uniform convergence separately in the next Lemma.
\begin{lem}[Convergence of the Lagrange multiplier]\label{lem:time-discrete-sigma}
  Let $(\rho_h^k)_{k\in \N}$ be the solution of the scheme~\eqref{discrete_scheme} and define
  \begin{equation}\label{e:def:sigma_h}
    \sigma_h(t) := \sum \limits_{k=0}^\infty \sigma_h^k \ \mathds{1}_{[kh,(k+1)h)}(t).
  \end{equation}
  with $\sigma_h^k$ given by~\eqref{easy_version_of_sigma_k_h}. Then, there exists $\sigma \in C([0,T];\R)$ such that
  \begin{equation}\label{e:sigma_h:to:sigma}
    \sigma_h \to \sigma  \qquad\text{uniformly on $(0,T)$} .
  \end{equation}
\end{lem}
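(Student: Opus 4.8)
The plan is to show that the family $\set{\sigma_h}$ is uniformly bounded and equicontinuous (in a suitable sense on the discrete time grid), extract a uniform limit by Arzelà--Ascoli, and verify this limit agrees with the Lagrange multiplier produced by the weak solution $\rho$ from Theorem~\ref{existence_of_weak_solutions}. Recall from~\eqref{easy_version_of_sigma_k_h} that
\begin{equation*}
  \sigma_h^k = \int_\R H'(x) \rho_h^k(x)\dx{x} + \frac1h\int_\R\bra[\big]{\rho_h^k(x)-\rho_h^{k-1}(x)}\,x\dx{x}.
\end{equation*}
Using the constraint $\rho_h^k \in \cM^{\ell(kh)}$ exactly, the second term collapses telescopically to $\bra[\big]{\ell(kh)-\ell((k-1)h)}/h$, which is controlled by $\norm{\dot\ell}_{L^\infty}$ since $\ell\in W^{1,\infty}$. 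For the first term, the a priori bounds~\eqref{uniform_bound_on_second_moment} and the growth assumption $\abs{H'(x)}\le C(1+\abs{x})$ from Assumption~\ref{assume:TD}, together with Cauchy--Schwarz, give $\abs{\int H'\rho_h^k}\le C(1+M_2(\rho_h^k)^{1/2})\le C$ uniformly. Hence $\sup_{h,k}\abs{\sigma_h^k}\le C$, so $\set{\sigma_h}$ is uniformly bounded.

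For equicontinuity I would estimate $\abs{\sigma_h^k-\sigma_h^{k-1}}$. The discrete-derivative part is $\bra[\big]{\ell(kh)-2\ell((k-1)h)+\ell((k-2)h)}/h$, which need not be small pointwise, but it is the second difference of a Lipschitz function; rather than a pointwise modulus, the natural route is to show that $\sigma_h$ is bounded in $BV((0,T))$ or, better, to use that $\frac1h\int(\rho_h^k-\rho_h^{k-1})x\,dx$ is a discretization of $\dot\ell$ whose piecewise-constant interpolation converges to $\dot\ell$ in, say, $L^1$ or weak-$*$. The cleaner alternative, which I expect the authors take, is to prove directly that $\sigma_h$ converges by identifying the limit term by term: the first term converges because $\rho_h\rightharpoonup\rho$ weakly in $L^1$ and, using~\eqref{uniform_bound_on_second_moment}, the tails are uniformly small so $\int H'\rho_h(t)\to\int H'\rho(t)$; the second term is simply the difference quotient of $\ell$ at scale $h$, which converges uniformly on $(0,T)$ to $\dot\ell$ when $\ell\in C^1$, or more carefully, one passes to an a.e.-defined limit and upgrades to uniform convergence using the continuity of $t\mapsto\int H'\rho(t)\,dx + \dot\ell(t)$. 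One then \emph{defines} $\sigma(t):=\int_\R H'(x)\rho(t,x)\dx{x}+\dot\ell(t)$, which is continuous on $[0,T]$ by the regularity of $\rho$ in~\eqref{regularity} and the absolute continuity of $\ell$ (here one may need $\ell\in W^{1,\infty}$ to only give $\dot\ell\in L^\infty$, so strictly speaking the stated continuity of $\sigma$ may require interpreting $\dot\ell$ through its continuous representative or strengthening the hypothesis; I would flag this).

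The main obstacle is precisely the second term $\frac1h\int(\rho_h^k-\rho_h^{k-1})x\,dx$: while it equals the exact difference quotient of $\ell$ on the grid (so no control on $\rho_h$ is needed there), turning ``$\sigma_h\to\sigma$ a.e.'' into ``$\sigma_h\to\sigma$ \emph{uniformly}'' when $\ell$ is merely Lipschitz is delicate, since $\dot\ell$ need not be continuous. I expect one resolves this either by an extra mollification/approximation argument on $\ell$, or by noting that only the weak formulation~\eqref{discrete_weak_formulation} is needed in Theorem~\ref{existence_of_weak_solutions}, so that $L^1$ or weak-$*$ convergence of $\sigma_h$ would suffice and the stated uniform convergence is obtained under the implicit additional regularity (e.g.\ $\ell\in C^1$) used in the long-time analysis. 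The remaining steps---uniform smallness of tails via~\eqref{uniform_bound_on_second_moment}, and passing the weak $L^1$ convergence through the linearly growing weight $H'$ after splitting $H'=H'\mathds{1}_{\abs{x}\le R}+H'\mathds{1}_{\abs{x}>R}$---are routine.
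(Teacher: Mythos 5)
Your opening steps match the paper's: you establish uniform boundedness by collapsing the difference-quotient part of $\sigma_h^k$ to $\bra{\ell(kh)-\ell((k-1)h)}/h$, and you rightly flag that the claimed uniform convergence is delicate under $\ell\in W^{1,\infty}$ alone. Indeed, the paper's own proof of this lemma bounds the second-difference term $\frac{1}{h^2}\bra{\ell(kh)-2\ell((k-1)h)+\ell((k-2)h)}$ by $\sup\abs{\ddot\ell}$, implicitly assuming $\ell\in C^2$ (or $\ddot\ell\in L^\infty$), which is stronger than Assumption~\ref{assume:TD}. So your concern corresponds to a real slip in the stated hypotheses. However, the central idea you are missing is how to control the \emph{other} piece of the increment, namely $\frac{1}{h}\int_\R H'(x)\bra{\rho_h^k-\rho_h^{k-1}}\dx{x}$. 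The paper does this not by attempting to prove pointwise-in-time convergence of $\int H'\rho_h(t,\cdot)$ (which, as you sense, does not follow from the space-time weak $L^1$ convergence of $\rho_h$), but by \emph{testing the approximate weak formulation~\eqref{discrete_weak_formulation} with $\zeta=H'$}, justified by truncation and dominated convergence using the second-moment bound. This yields
\begin{equation*}
\abs*{\frac1h\int_\R H'\bra{\rho_h^k-\rho_h^{k-1}}\dx{x}} \;\leq\; \abs*{\int_\R\bra[\big]{(H'-\sigma_h^k)H''-H'''}\rho_h^k\dx{x}} + \frac{\sup\abs{H'''}}{2}\,\frac{W_2^2(\rho_h^{k-1},\rho_h^k)}{h},
\end{equation*}
whose terms are controlled by Assumption~\ref{assume:TD}, the already-established uniform bound on $\sigma_h^k$, and the a priori estimates of Lemma~\ref{lem:time-discrete-apriori}. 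With this increment estimate in hand, the paper passes to the piecewise linear interpolant $\tilde\sigma_h$, deduces a uniform modulus of continuity, applies Arzelà--Ascoli, and closes by estimating $\sup_t\abs{\sigma_h(t)-\tilde\sigma_h(t)}\leq Ch$. Without this use of the weak formulation, your proposal has no route to equicontinuity: the $BV$-type bounds you suggest would only give a.e.\ or $L^1$ convergence, and ``identifying the limit term by term'' both over-reaches (the lemma only asserts existence of a continuous uniform limit, not an explicit formula) and founders on the fact that weak convergence in $L^1((0,T)\times\R)$ does not control the time slices $t\mapsto\int H'\rho_h(t,\cdot)$.
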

With the help of the above three Lemmas, we can prove Theorem~\ref{existence_of_weak_solutions}.
\begin{proof}[Proof of Theorem~\ref{existence_of_weak_solutions}]
The a priori estimates of Lemma~\ref{lem:time-discrete-apriori} allow us to pass in the piecewise constant interpolation $\rho_h$ from~\eqref{e:def:const:interpolation} to the limit $h\rightarrow 0$.
Indeed, from \eqref{uniform_bound_on_second_moment} we derive tightness of $(\rho_h)_{h > 0}$ and from \eqref{uniform_bound_on_entropy_positive_part}, since $[0, \infty) \ni x \mapsto \max\{x \log(x),0\}$ has superlinear growth, we deduce that for any $T>0$ holds up to a subsequence
\begin{equation}\label{weak_L1_convergence_in_0-T_times_R}
\rho_h \rightharpoonup \rho \quad \text{ in } L^1((0,T) \times \R) .
\end{equation}
In addition the tightness implies that $\rho(t,\cdot)$ is a probability density for a.e.~$t\in (0,T)$, which shows \eqref{weak_L1_convergence_of_the_constant_interpolation}. To prove $\rho(t, \cdot) \in \cM^{\ell(t)}$ for a.e. $t \in (0,T)$, we note that the a priori estimate~\eqref{uniform_bound_on_second_moment} passes to the limit and we have $M_2(\rho(t,\cdot)) \leq C$ for all $t\in (0,T)$.
Similarly, for showing that $\rho(t,\cdot)\in \cM^{\ell(t)}$, we use from the construction the identity
\begin{align}
\frac{1}{2 \delta} \int_{t-\delta}^{t+\delta} \sum \limits_{kh\leq T} \ell(kh) \, \mathds{1}_{[kh,(k+1)h)}(\tau)\dx{\tau}
 =  &\frac{1}{2 \delta} \int_{t-\delta}^{t+\delta} \int_\R x  \, \rho_h(\tau,x)\dx{x}\dx{\tau}.
\end{align}
The second moment bound~\eqref{uniform_bound_on_second_moment} implies enough tightness to pass to the limit in the identity. By the growth assumption~\eqref{assume:TD:H:growth} on $H$, the statement $E(\rho(t,\cdot)) \in L^\infty((0,T))$ follows along the same argument as the proof for $M_2(\rho(t,\cdot)) \in L^\infty((0,T))$, which completes the proof of \eqref{uniform_bound_on_second_moment_and_energy}.

\noindent It remains to show that $\rho$ solves \eqref{e:cFP}. Therefore, we sum \eqref{discrete_weak_formulation} from $k=1,\dots, N$, use the a priori estimate~\eqref{uniform_bound_on_distance_sum} and obtain for any $\zeta \in C_c^\infty([0,T) \times \R)$ by using the definition of $\sigma_h$ from~\eqref{e:def:sigma_h} the estimate
\begin{equation}\label{almost_final_sorted_discrete_version}
\begin{split}
\bigg \vert &\frac {1}{h} \int_{T-h}^T \int_\R \zeta(t,x) \rho_h(t,x) \dx{x}\dx{t} -  \frac {1}{h} \int_0^h \int_\R \zeta(t+h,x) \rho_h(t,x) \dx{x}\dx{t}
   \\
&- \int_{(h,T-h)} \int_\R \frac{\zeta(t+h,x)- \zeta(t,x)}{h} \rho_h(t,x) \dx{x}\dx{t} \\
&+ \int_{(h,T)} \int_\R \bra[\big]{ \left (  H'(x) - \sigma_h(t) \right ) \partial_x \zeta(t,x) -\partial_{xx} \zeta(t,x) } \rho_h(t,x) \dx{x}\dx{t} \bigg  \vert \\
&\leq \sup \limits_\R \frac{\big | \partial_{xx} \zeta(t,x ) \big |}{2} \ \sum \limits_{k=1}^{N-1} W_2^2(\rho_h^{k-1}, \rho_h^k) .
\end{split}
\end{equation}
To arrive at the weak formulation of \eqref{e:cFP}, it is left to pass to the limit on both sides. The right-hand side goes to zero for $h \rightarrow 0$, which follows directly from \eqref{uniform_bound_on_distance_sum}. For passing to the limit on the left hand side, we use~\eqref{e:sigma_h:to:sigma} from Lemma~\ref{lem:time-discrete-sigma} and finally obtain
\begin{align}
\begin{split}
0 = &- \int_\R  \zeta(x,0) \rho^0(x)\dx{x}
 - \int_{(0,T)} \int_\R \rho(t,x) \partial_t \zeta(t,x)\dx{x}\dx{t} \\
&+ \int_{(0,T)} \int_\R \rho(t,x) \left ( \left (H'(x) -\sigma(t) \right )\partial_x \zeta(t,x) -\partial_{xx} \zeta(t,x) \right )\dx{x}\dx{t} ,
\end{split}
\end{align}
which by Definition~\ref{definition_of_weak_solutions} is a weak solution to the constrained Fokker-Planck equation.
\end{proof}

\subsection{Proof of auxiliary Lemmas~\ref{lem:time-discrete-weak}--\ref{lem:time-discrete-sigma}}
\begin{proof}[Proof of Lemma~\ref{lem:time-discrete-weak}]
We can choose $h$ arbitrary but fixed, hence we neglect it in the notation of the proof.
Since $\rho^k$ minimizes \eqref{discrete_scheme} among all admissible probability densities $\rho \in \cM^{\ell(kh)}$, the Euler-Lagrange equation has to ensure that perturbations of $\rho^k$ are still in $\cM^{\ell(kh)}$. The perturbations are realized as a push-forward with respect to the flow of a smooth vector field.

In contrast to \cite{JKO98}, we have to use a second push-forward as correction to ensure the constraint on the first moment is met. This second push-forward causes the Lagrange multiplier. Note, that the idea is the same as in Section~\ref{S:CGF:Formalism} and we would like to choose the constant vector field $x\mapsto 1$ corresponding to $\nabla \cC$ (cf.\ \eqref{e:CGF:nablaC}). However, this vector field is not in $C_c^\infty(\R)$ and we work with an admissible one and use an approximation argument at the end of the proof.

Let $\xi \in C_c^\infty(\R)$ and let $\Phi_\alpha$ be the flow with respect to $\xi$, that is the solution of
\begin{equation}
\partial_\alpha \Phi_\alpha = \xi \circ \Phi_\alpha \quad \text{ for all } \alpha \in \R \quad \text{ and } \Phi_0 = \Id .
\end{equation}
Then the push-forward of $\rho^k$ with respect to $\Phi_\alpha$ denoted by $\rho^k_\alpha := \Phi_{\alpha \#} \rho^k$ is given by
\begin{equation}
  \int_\R \varphi(x) \rho^k_\alpha(x) \dx{x} = \int_\R  \varphi \circ \Phi_\alpha(x) \rho^k(x) \dx{x} \quad \text{ for all } \varphi \in C_b^0(\R) .
\end{equation}
For the correction take another vector field $\eta \in C_c^\infty(\R)$, that satisfies the nondegeneracy property $\int_\R \eta(x) \, \rho^k(x) \dx{x} \neq 0$ ensuring that the respective push-forward is able to change the first moment of $\rho^k$. We define the flow $\Psi_\beta$ with respect to~$\eta$:
\begin{equation}
\partial_\beta \Psi_\beta = \eta \circ \Psi_\beta \quad \text{ for all } \beta \in \R \quad \text{ and } \Psi_0 = \Id .
\end{equation}
Let the joint push-forward be given as
\begin{equation}
\rho^k_{\alpha, \beta} := \Psi_{\beta \#} (\Phi_{\alpha \#} \rho^k ) .
\end{equation}
To make sure that $\rho_{\alpha,\beta}^k \in \mathcal{P}_2(\R)$, it needs to be shown that $M_2(\rho_{\alpha,\beta}^k) < \infty$.
First of all observe, that $\Psi_\beta \circ \Phi_\alpha(x) = x$ on $\R \setminus ( \supp \xi \cup \supp \eta )$ and $\supp \xi \cup \supp \eta  \subset [-K,K]$ for some $K>0$. Hence, we can always approximate by using a spatial cut-off function to arrive at the estimate
\begin{align} \label{perturbation_finite_second_moment}
M_2(\rho_{\alpha,\beta}^k) = \int_\R \bra*{\Psi_\beta \circ \Phi_\alpha(x)}^2 \rho^k(x) \dx{x} \leq \sup \limits_{x \in [-K,K]}  \bra*{\Psi_\beta \circ \Phi_\alpha(x)}^2+ M_2(\rho^k)
\end{align}
Another approximation argument with a spatial cut-off function, ensures the identity
\begin{align} \label{first_moment_of_perturbed_density_representation}
M_1(\rho^k_{\alpha,\beta}) = \int_\R (\Psi_\beta \circ \Phi_\alpha)(x) \, \rho^k(x) \dx{x}
\end{align}
The smoothness of the flows allows to differentiate the above functional
\begin{align}
\frac{\partial M_1(\rho^k_{\alpha,\beta})}{\partial \alpha} \bigg \vert_{(0,0)} &\stackrel{\eqref{first_moment_of_perturbed_density_representation}}{=}  \lim \limits_{\alpha \rightarrow 0} \frac{1}{\alpha}  \int_\R (\Phi_\alpha - \Id) \rho^k(x) \dx{x} = \int_\R \xi(x) \rho^k(x) \dx{x} \\
\frac{\partial M_1(\rho^k_{\alpha,\beta})}{\partial \beta} \bigg \vert_{(0,0)} &\stackrel{\eqref{first_moment_of_perturbed_density_representation}}{=}  \lim \limits_{\beta \rightarrow 0} \frac{1}{\beta}  \int_\R (\Psi_\beta - \Id) \rho^k(x) \dx{x} = \int_\R \eta(x) \rho^k(x) \dx{x} \neq 0
\end{align}
Now, the constraint $\rho_{\alpha,\beta}^k\in \cM^{\ell(kh)}$ reads $M_1(\rho^k_{\alpha,\beta}) \stackrel{!}{=} \ell(kh)$.  Hence, by the implicit-function-theorem, due to the nondegenerate property of $\eta$, there is some $\eps>0$ and a function  $i \in C^1\bra*{ (-\eps,\eps) ; \R}$  such that $M_1\bra[\big]{\rho^k_{\alpha, i(\alpha)}} = \ell(kh)$. To identify the Lagrange multiplier, we note that $i'(0)$ is given by
\begin{equation}
i'(0) = - \frac{\partial_\alpha M_1\bra[\big]{\rho^k_{\alpha, \beta}}\big|_{(0,0)}}{\partial_\beta M_1\bra[\big]{\rho^k_{\alpha, \beta}}\big|_{(0,0)}} =  - \frac{\int_\R \xi(x) \rho^k\dx{x}}{\int_\R \eta(x) \rho^k\dx{x}}\label{differential_of_implicit_function} .
\end{equation}
For the Euler-Lagrange-equation, we proceed to calculate
\begin{align*}
&\text{\emph{a)} } \pderiv{}{\alpha} \bigg\vert_0 E(\rho^k_{\alpha, i(\alpha)}) \qquad\qquad \text{\emph{b)} } \pderiv{}{\alpha} \bigg \vert_0 S(\rho^k_{\alpha, i(\alpha)}) \\
&\text{\emph{c)} } \limsup \limits_{\alpha \rightarrow 0} \frac{1}{\alpha} \left ( \tfrac{1}{2} W_2^2(\rho^{k-1}, \rho^k_{\alpha, i(\alpha)}) - \tfrac{1}{2} W_2^2(\rho^{k-1}, \rho^k) \right ) .
\end{align*}
\emph{Ad a)} By using monotone convergence, we can use $H$ as a test function in the push-forward and have
$\int_\R H(x) \rho^k_{\alpha, i(\alpha)}(x) \dx{x} = \int_\R  H(\Psi_{i(\alpha)} \circ \Phi_\alpha(x)) \rho^k(x) \dx{x}$.
Therefore, it holds
\begin{align*}
\frac{d}{d \alpha} \bigg \vert_0 E(\rho^k_{\alpha, i(\alpha)})
 &= \lim \limits_{\alpha \rightarrow 0} \frac{1}{\alpha} \int_\R \left ( H(\Psi_{i(\alpha)} \circ \Phi_\alpha(x)) - H(x) \right ) \rho^k(x) \dx{x} \\
&= \int_\R H'(x) \left ( \xi(x)+i'(0) \eta(x) \right )  \rho^k(x) \dx{x} .
\end{align*}
Here, we used again that $\xi$ and $\eta$ have compact support and thus $\tfrac{1}{\alpha}(H(\Psi_{i(\alpha)} \circ \Phi_\alpha(x)) - H(x))$ converges uniformly to $\pderiv{}{\alpha} \big\vert_0 H(\Psi_{i(\alpha)} \circ \Phi_\alpha)$.

\noindent\emph{Ad b)} The use of $\log(\rho^k_{\alpha, i(\alpha)})$ as a test function in the push forward is justified by a both sided truncation and an application of Lusin's Theorem to the truncated function.
Moreover, for $\alpha$ small enough, the function $\alpha \mapsto \Psi_{i(\alpha)} \circ \Phi_\alpha$ is strictly monotone and we can apply the transformation rule to find the following identity
\begin{align}
 \rho^k_{\alpha , i(\alpha)}\bra*{\Psi_{i(\alpha)} \circ \Phi_\alpha(x)} = \frac{\rho^k(x)}{\partial_x\bra*{ \Psi_{i(\alpha)} \circ \Phi_\alpha(x)}}  .
\end{align}
Thus, we obtain
\begin{align}
\int_\R \log (\rho^k_{\alpha , i(\alpha)}(x)) \rho^k_{\alpha , i(\alpha)}(x) \dx{x} &= \int_\R \log( \rho^k_{\alpha , i(\alpha)}(\Psi_{i(\alpha)} \circ \Phi_\alpha(x))) \rho^k(x) \dx{x} \\
&= \int_\R \log \left ( \frac{\rho^k(x)}{\partial_x (\Psi_{i(\alpha)} \circ \Phi_\alpha(x))} \right )\rho^k(x) \dx{x} .
\end{align}
Therefore it holds:
\begin{align*}
\frac{d}{d \alpha} \bigg \vert_0 S(\rho^k_{\alpha, i(\alpha)}) &= - \lim \limits_{\alpha \rightarrow 0} \int_\R  \frac{1}{\alpha} \log \left (\partial_x (\Psi_{i(\alpha)} \circ \Phi_\alpha(x)) \right ) \rho^k(x) \dx{x} \\
&= -\int_\R \left ( \partial_x \xi +i'(0) \partial_x \eta \right ) \rho^k(x) \dx{x} .
\end{align*}

\noindent\emph{Ad c)} Let $p$ be an optimal coupling of $\rho^{k-1}$ and $\rho^k$, then $p_\alpha$ satisfying for all $\varphi \in C^0_b(\R \times \R)$
\begin{equation}
\int_{\R \times \R} \varphi(x,y) \, p_\alpha(\dx{x},\dx{y}) = \int_{\R \times \R} \varphi(x, \Psi_{i(\alpha)} \circ \Phi_\alpha(y)) \, p(\dx{x},\dx{y})
\end{equation}
is a coupling of $\rho^{k-1}$ and $\rho^k_{\alpha, i(\alpha)}$.
Therefore it holds by the usual truncation and approximation arguments
\begin{align*}
W_2^2(\rho^{k-1}, \rho^k_{\alpha, i(\alpha)}) \leq  \int_{\R \times \R}  | \Psi_{i(\alpha)} \circ \Phi_\alpha(y) -x|^2 \, p(\dx{x},\dx{y})
\end{align*}
which implies
\begin{align*}
\limsup \limits_{\alpha \rightarrow 0} \ &\frac{1}{\alpha} \left ( \tfrac{1}{2} W_2^2(\rho^{k-1}, \rho^k_{\alpha, i(\alpha)}) - \tfrac{1}{2} W_2^2(\rho^{k-1}, \rho^k) \right )\\
&\leq  \int_{\R \times \R} (y-x) \left ( \xi(y)+i'(0) \eta(y) \right ) p(\dx{x},\dx{y}) .
\end{align*}
Since \(\rho^k_{\alpha,i(\alpha)}\) is a perturbation of the minimizer \(\rho^k\) it holds
\begin{align}
\frac{d}{d\alpha} \bigg |_0 \left ( \frac{1}{2} W_2^2(\rho^{k-1}, \rho^k_{\alpha,i(\alpha)}) + h \mathcal{F}(\rho^k_{\alpha,i(\alpha)}) \right ) = 0 .
\end{align}
Together with \emph{a), b), c)} applied also to the reversed vector fields $\xi \rightarrow -\xi$ and $\eta \rightarrow - \eta$ as well as noting that this does not change the sign of $i'(0)$), we obtain
\begin{align}\label{discrete_euler_lagrange_intermediate_equation}
\begin{split}
0 = &\int_{\R \times \R} (y-x) \left ( \xi(y)+i'(0) \eta(y) \right ) p(\dx{x},\dx{y}) \\
&+ h \int_\R \left ( H'(x) ( \xi(x) + i'(0) \eta(x) ) -\partial_x ( \xi(x) + i'(0) \eta(x) ) \right ) \rho^k(x) \dx{x} .
\end{split}
\end{align}
To arrive at a discrete approximation of the weak formulation of \eqref{e:cFP} with constraint~\eqref{lagrange_multiplier}, it is necessary to find an explicit representation of $p$ and generate the time difference. Therefore, the following estimate for any $\varphi \in C_c^\infty(\R)$ is used
\begin{equation}
\begin{split}
&\bigg|\int_\R (\rho^k - \rho^{k-1} ) \varphi(y)\dx{y} -  \int_{\R \times \R}  (y-x) \varphi'(y) \, p(\dx{x},\dx{y})\bigg |   \\
&=   \bigg | \int_{\R \times \R} \bra[\big]{\varphi(y)-\varphi(x) - (y-x) \varphi'(y) } \, p(\dx{x},\dx{y})\bigg | \\
&\leq \frac{1}{2} \sup \limits_\R |\varphi''| \int_{\R \times \R} (y-x)^2 \, p(\dx{x},\dx{y}) = \frac{1}{2} \sup \limits_\R |\varphi''| ~ W_2^2(\rho^{k-1}, \rho^k)
\end{split}
\end{equation}
Let $\chi, \zeta \in C_c^\infty(\R)$ be such that $\chi' = \eta$ and $\zeta' = \xi$. Now, we use $\varphi = \zeta+ i'(0)\chi$ in the above estimate together with \eqref{discrete_euler_lagrange_intermediate_equation} to obtain
\begin{equation*}
\begin{split}
&\bigg | \int_\R \frac{\rho^{k}-\rho^{k-1}}{h} \left ( \zeta + i'(0) \chi \right) + \bra[\big]{ H'(x) \partial_x ( ( \zeta + i'(0) \chi ) - \partial_{xx} ( \zeta + i'(0) \chi ) } \rho^k(x) \dx{x} \bigg | \\
&\leq \frac{1}{2}~ \sup \limits_\R | \partial_{xx} ( \zeta + i'(0) \chi ) |~~ \frac{1}{h} ~ W_2^2(\rho^{k-1}, \rho^k) .
\end{split}
\end{equation*}
In combination with \eqref{differential_of_implicit_function}, which now reads $i'(0)= - \int_\R \partial_x \zeta ~\rho^k\dx{x}  / \int_\R \partial_x \chi ~\rho^k\dx{x}$, we get
\begin{align}\label{very_first_version_of_sth_like_discrete_weak_formulation}
\begin{split}
&\bigg | \int_\R \frac{1}{h} \left ( \rho^{k}-\rho^{k-1} \right ) \zeta + \left ( H'(x) \partial_x  \zeta - \partial_{xx}  \zeta \right ) \rho^k\dx{x} - \sigma^k \int_\R \partial_x \zeta ~\rho^k\dx{x} \bigg | \\
&\leq \frac{1}{2}~ \sup \limits_\R | \partial_{xx} ( \zeta + i'(0) \chi ) |~~ \frac{1}{h} ~ W_2^2(\rho^{k-1}, \rho^k) .
\end{split}
\end{align}
with
\begin{equation}\label{e:def:sigmak:p}
  \sigma^k :=  \frac{1}{h} \frac{\int_\R (\rho^k -\rho^{k-1}) \chi\dx{x}}{\int_\R \partial_x \chi ~\rho^k\dx{x}} + \frac{\int_\R H' \partial_x \chi ~\rho^k\dx{x}}{\int_\R \partial_x \chi ~\rho^k\dx{x}} + \frac{\int_\R \partial_{xx} \chi ~\rho^k\dx{x}}{\int_\R \partial_x \chi ~\rho^k\dx{x}} .
\end{equation}
To simplify the above expression, we use the correction vector field~$\chi(x) = x$, which obviously satisfies the nondegenerate assumption, since \(\int_\R \partial_x \chi(x) \rho^k(x)\dx{x} = \int_\R \rho^k(x)\dx{x} =1\). To approximate $\chi$, let \(\varphi\) be such that
\begin{align}
\varphi \in C_c^\infty(\R,[0,1]), \quad \varphi = 1 \text{ on } [-1,1] , \quad \varphi = 0 \text{ on } \R \setminus [-2,2] , \quad |\varphi'|, |\varphi''| \leq C
\end{align}
and set $\varphi_M(x):= \varphi \left (\frac{x}{M} \right)$ and hence $\chi_M(x):= x \varphi_M(x)$. Then, it holds $\chi_M \in C_c^\infty(\R)$, $\chi_M(x) = x$  on $[-M,M]$, $\chi_M(x) = 0$ on $[-2M,2M]$, $|\chi'_M| \leq C$ and $|\chi''_M| \leq C$ for all $M >0$. Since $\rho^k$ and $\rho^{k-1}$ have finite second moments, we can use $|\rho^k - \rho^{k-1}| |x|$ as an integrable majorant for $(\rho^k - \rho^{k-1}) \chi_M$.
Finiteness of the second moment of $\rho^k$ and the at most linear growth of $H'$ at infinity give that $C |H'| \rho^k$ is a integrable majorant for $H' \partial_x \chi_M \rho^k$.
Due to $\rho^k$ being probability densities and therefore in~$L^1(\R)$, $C\rho^k$ is a simple integrable majorant for $\partial_{xx} \chi_M \rho^k$ and $\partial_{x} \chi_M \rho^k$.
Therefore taking $\chi =\chi_M$ in \eqref{e:def:sigmak:p} and applying Lebesgue's dominated convergence theorem (for $M \rightarrow \infty$) we arrive at~\eqref{easy_version_of_sigma_k_h}. Finally, comparing with~\eqref{very_first_version_of_sth_like_discrete_weak_formulation}, we have also proven~\eqref{discrete_weak_formulation}.
\end{proof}

\medskip
\begin{proof}[Proof of Lemma~\ref{lem:time-discrete-apriori}]
The $\rho_h^k$ are constructed by the scheme \eqref{discrete_scheme} and we want to derive the a priori estimates from this relation. Especially, we would like to use $\rho^{k-1}_h$ at time step $k$ as a test function. However, since it may hold $\ell(kh) \neq \ell((k-1)h)$ and hence $\rho_h^{k-1} \not\in \cM^{\ell(kh)}$ we have to make a perturbation. Let us define the translation of $\rho_h^{k-1}$ as follows:
\begin{align}
_{a_{k-1}}\rho_h^{k-1} := \rho^{k-1}_h(\cdot -{a_{k-1}})
\end{align}
where \({a_{k-1}}\) is chosen such that \(M_1(_{a_{k-1}}\rho_h^{k-1})= M_1(\rho_h^{k-1})+{a_{k-1}} \overset{!}{=} M_1(\rho^k_h)\). Now, $_{a_{k-1}}\rho_h^{k-1}$ is admissible in \eqref{discrete_scheme}, which leads to the basic bound for all $k$ s.t.~$ kh \leq T$
\begin{equation}\label{sheme_estimate}
\tfrac{1}{2}W_2^2(\rho^k_h,\rho^{k-1}_h) +h\mathcal{F}(\rho^k_h) \leq \tfrac{1}{2} W_2^2(_{a_{k-1}}\rho_h^{k-1}, \rho_h^{k-1}) +h\mathcal{F}(_{a_{k-1}}\rho_h^{k-1}).
\end{equation}
Summing this estimate from $k=1, \dots, N$, we arrive at
\begin{align}
\frac{1}{2} \sum \limits_{k=1}^N W_2^2(\rho^k_h,\rho^{k-1}_h) &\leq \frac{1}{2} \sum \limits_{k=1}^N W_2^2(_{a_{k-1}}\rho_h^{k-1}, \rho_h^{k-1}) +h  \sum \limits_{k=1}^N \left (\mathcal{F}(_{a_{k-1}}\rho_h^{k-1}) -\mathcal{F}(\rho^k_h) \right) \notag \\
&=\frac{1}{2} \sum \limits_{k=1}^N \underbrace{W_2^2(_{a_{k-1}}\rho_h^{k-1}, \rho_h^{k-1})}_{=:\text{(I$_k$)}} \label{estimate_for_the_distance_sum}\\
&\phantom{=} +h  \bra[\Bigg]{ \sum \limits_{k=2}^{N} \underbrace{ \left (\mathcal{F}(_{a_{k-1}}\rho_h^{k-1}) -\mathcal{F}(\rho^{k-1}_h) \right )}_{=:\text{(II$_k$)}}  +\mathcal{F}(\rho^0) - \mathcal{F}(\rho^N_h) } \notag.
\end{align}
The term (I$_k$) is easily estimated by using the shift $x\mapsto x+a_{k-1}$ as transport map
\begin{equation*}
W_2^2(_{a_{k-1}}\rho_h^{k-1}, \rho^{k-1}) \leq \int_{\R} |x-(x+ a_{k-1})|^2 \rho^{k-1}_h(x) \dx{x}  =a_{k-1}^2 .
\end{equation*}
Now, we use the growth assumption~\eqref{assume:TD:H:growth} on $H$ to estimate  (II$_k$)
\begin{equation}\label{perturbation_estimate_on_free_energy}
\begin{split}
\MoveEqLeft{\mathcal{F}(_{a_{k-1}}\rho_h^{k-1}) -\mathcal{F}(\rho^{k-1}_h) = \int_\R \left ( H(x+{a_{k-1}}) -H(x) \right ) \rho_h^{k-1}(x) \dx{x} }\\
&\leq \int_\R \left (|H'(x) {a_{k-1}}| +\tfrac{1}{2} a_{k-1}^2 \sup \limits_{y \in [x,x+{a_{k-1}}]} |H''(y)| \right) \rho_h^{k-1}(x) \dx{x} \\
&\leq C |{a_{k-1}}| \left ( \int_\R x^2 \rho_h^{k-1}(x) \dx{x} + 1 +  |{a_{k-1}}| \right ) \\
&\leq C |{a_{k-1}}| \left ( M_2(\rho_h^{k-1}) + 1 + \abs{a_{k-1}} \right ) .
\end{split}
\end{equation}
Since $\ell$ is assumed to be Lipschitz, there exists $L>0$ such that $|{a_{k-1}}| = |\ell(kh)-\ell((k-1)h)| \leq Lh$. Moreover, we use the standard lower bound on the entropy \cite[Equation (14)]{JKO98} in terms of the second moment, that is $S(\rho) \geq -C\bra*{ M_2(\rho) +1}^{\alpha}$ for any $\alpha\in \bra*{\tfrac{1}{3},1}$. Since $H\geq 0$ by Assumption~\ref{assume:TD}, we can conclude that
\begin{equation}\label{energy_lower_estimate}
  -\mathcal{F}(\rho_h^N) \leq C \bra{ M_2(\rho_h^N) + 1}^{\alpha} \leq C \bra{ M_2(\rho_h^N) + 1} .
\end{equation}
Applying these bounds to \eqref{estimate_for_the_distance_sum} yields
\begin{align}
\frac{1}{2} \sum \limits_{k=1}^N W_2^2(\rho^k_h,\rho^{k-1}_h)
\leq C h \bra*{ T + \sum \limits_{k=0}^{N}  M_2(\rho_h^{k})} \label{estimate_for_the_distance_sum_refined}
\end{align}
for some uniform constant $C$ only depending on $L$ and Assumption~\ref{assume:TD}.
Hence, once we have established \eqref{uniform_bound_on_second_moment}, also~\eqref{uniform_bound_on_distance_sum} follows. To estimate the difference-quotient of the second moment, we use \eqref{discrete_weak_formulation} with $\zeta = x^2$, which is justified by the finiteness of the second moment of each of the $\rho^k$ and the growth assumption~\eqref{assume:TD:H:growth} on $H'$
\begin{align*}
\MoveEqLeft{\frac{1}{h} \left ( M_2(\rho_h^k) -M_2(\rho^{k-1}_h) \right ) = \frac{1}{h} \int_\R x^2 \left ( \rho^k_h - \rho^{k-1}_h \right )\dx{x}} \\
&\stackrel{\mathclap{\eqref{discrete_weak_formulation}}}{\leq} \ \bigg | \int_\R \left ( \left ( H'- \sigma_h^k \right ) 2x -2 \right ) \rho_h^k(x) \dx{x} \bigg | + \frac{1}{h} W_2^2 \left (\rho^k_h,\rho^{k-1}_h \right ) \\
&\leq \ \underbrace{ 2\int_\R |H'| |x| \rho^k_h(x) \dx{x}}_{=:\text{(I)}} + \underbrace{2|\sigma_h^k| \int_\R |x| \rho^k_h(x) \dx{x}}_{=:\text{(II)}} +2 + \frac{1}{h} {W_2^2 \left (\rho^k_h,\rho^{k-1}_h \right )}  .
\end{align*}
By the growth assumption~\eqref{assume:TD:H:growth} follows $\text{(I)}\leq C \bra*{ 1+ M_2(\rho_h^k)}$. By using the definition of $\sigma^k_h$ from~\eqref{easy_version_of_sigma_k_h} follows
\begin{align*}
\text{(II)} &\leq 2 \left ( C \int_\R (|x|+1) \rho_h^k\dx{x} + L \right ) \int_\R |x| \rho_h^k(x) \dx{x} \leq C \bra*{1+  M_2(\rho_h^k)} .
\end{align*}
Altogether, we arrive at
\begin{align}
\frac{1}{h} \left ( M_2(\rho_h^k) -M_2(\rho_h^{k-1}) \right ) \leq C\bra*{1 +  M_2(\rho_h^k)} +  \frac{1}{h} {W_2^2 \left (\rho^k_h,\rho^{k-1}_h \right )} .
\end{align}
Summing from $k=1, \dots,N$ and multiplying by $h$ yields
\begin{align*}
M_2(\rho_h^N) - M_2(\rho^0) &\ \leq\ C hN + C h \sum \limits_{k=1}^N M_2(\rho_h^k) + \sum \limits_{k=1}^N W_2^2(\rho_h^k, \rho_h^{k-1})  \\
&\ \stackrel{\mathclap{\eqref{estimate_for_the_distance_sum_refined}}}{\leq}\  C T + C h \sum \limits_{k=1}^N M_2(\rho_h^k) .
\end{align*}
Hence, a discrete Gronwall argument gives the estimate
\begin{align}
M_2(\rho_n^N) \leq (M_2(\rho^0) +C) \exp( CT),
\end{align}
which establishes \eqref{uniform_bound_on_second_moment} and by~\eqref{estimate_for_the_distance_sum_refined} also~\eqref{uniform_bound_on_distance_sum}. By theses bounds, we can rewrite~\eqref{estimate_for_the_distance_sum} as
\begin{align}\label{estimate_for_the_distance_sum_refined_after_bounded_second_moment}
\frac{1}{2} \sum \limits_{k=1}^N W_2^2(\rho_h^k, \rho_h^{k-1} ) \leq CT h + h \left ( \mathcal{F}(\rho^0) - \mathcal{F}(\rho_h^N) \right )  .
\end{align}
Now, the estimates~\eqref{uniform_bound_on_entropy_positive_part} and~\eqref{uniform_bound_on_energy} follow by using once more the lower bound $S(\rho)\geq - \bra*{M_2(\rho)
+1}^\alpha$ for $\alpha\in (\tfrac{1}{3},1)$ along the lines as in \cite{JKO98}
\begin{alignat}{2}
\int_\R \max \{\rho_h^N \log(\rho_h^N),0 \}\dx{x} &\ \leq\ S(\rho_h^N) + \int_\R | \min \{\rho_h^N \log(\rho_h^N),0 \} |\dx{x} \\
&\ \leq\  S(\rho_h^N) +C (M_2(\rho_h^N)+1)^\alpha \\
&\ \leq\ \mathcal{F}(\rho_h^N) +C (M_2(\rho_h^N)+1)^\alpha \\
&\stackrel{\mathclap{\eqref{estimate_for_the_distance_sum_refined_after_bounded_second_moment}}}{\ \leq\ }  \mathcal{F}(\rho^0) +CT+C (M_2(\rho_h^N)+1)^\alpha,
\end{alignat}
which by \eqref{uniform_bound_on_second_moment} yields \eqref{uniform_bound_on_entropy_positive_part}. Similarly, we can estimate
\begin{alignat}{2}
E(\rho_h^N) &= \mathcal{F}(\rho_h^N) - S(\rho_h^N) - \log Z_0 \stackrel{\mathclap{\eqref{estimate_for_the_distance_sum_refined_after_bounded_second_moment}}}{\ \leq\ } \mathcal{F}(\rho^0) +C T+C (M_2(\rho_h^N)+1)^\alpha + C,
\end{alignat}
which again with \eqref{uniform_bound_on_second_moment} gives \eqref{uniform_bound_on_energy}.
\end{proof}

\medskip
\begin{proof}[Proof of Lemma~\ref{lem:time-discrete-sigma}]
The proof is based on an Arzelà-Ascoli argument. Since $\sigma_h$ is not continuous, we approximate it by a continuous function as follows
\begin{equation*}
\tilde{\sigma}_h(t) := \sigma_h^k + \left (\sigma_h^{k+1}-\sigma_h^k \right ) \frac{t-kh}{h} \qquad \text{for }t \in [kh, (k+1) h)
\end{equation*}
To apply the Arzelà-Ascoli argument, it is necessary to proof uniform boundedness and a uniform modulus of continuity of $(\tilde{\sigma}_h(t))_{h >0}$.

\noindent\emph{Uniform boundedness.} By using the Lipschitz bound on $\ell$ as well as the growth estimates on $H$ from Assumption~\ref{assume:TD}, it holds
\begin{equation}\label{boundedness_of_sigma}
\begin{split}
\sup  \limits_{h>0, t \in (0,T)}|\tilde{\sigma}_h(t)| &\leq  \max_{\{k,h : kh \leq T\}} |\sigma_h^k| \\
&\stackrel{\mathclap{\eqref{easy_version_of_sigma_k_h}}}{=} \max\limits_{\{k,h| kh \leq T\}} \bigg \vert \frac{1}{h} \int_\R \left (\rho_h^k- \rho^{k-1}_h \right ) x\dx{x} + \int_\R H'(x) \rho_h^k(x) \dx{x} \bigg \vert \\
&\leq\max \limits_{\{k,h| kh \leq T\}} \left (  L + C(1+M_2(\rho^k_h))\right ) \stackrel{\eqref{uniform_bound_on_second_moment}}{\leq} C .
\end{split}
\end{equation}
\noindent\emph{Uniform modulus of continuity.} We prove an Lipschitz bound, by first noting that for any $t,t'\in (0,T)$, it holds
\begin{align*}
\frac{| \tilde{\sigma}_h(t) - \tilde{\sigma}_h(t') |}{|t-t'|} \leq
   \max_{k\in \set{1,\dots,N}} \frac{|\sigma_h^{k} -\sigma_h^{k-1}|}{h}
\end{align*}
Hence, it suffices to estimate the increment for any $k$ with $kh \leq T$. Therefore, based on the definition of $\sigma_h^k$ from~\eqref{easy_version_of_sigma_k_h} we apply \eqref{discrete_weak_formulation} with $H'$ instead of $\zeta$, which is justified by an approximation using dominated convergence and the second moment estimate, and arrive at
\begin{align*}
\MoveEqLeft{\frac{|\sigma_h^k - \sigma_h^{k-1} |}{h} \stackrel{\eqref{easy_version_of_sigma_k_h}}{=}\frac{1}{h} \bigg | \frac{1}{h} \int_\R \left ( \rho_h^k - \rho_h^{k-2} \right ) x\dx{x} + \int_\R H'(x)   ( \rho_h^k - \rho_h^{k-1} )\dx{x}  \bigg | }
\\
&\stackrel{\eqref{discrete_weak_formulation}}{\leq} \left ( \bigg | \frac{1}{h^2} \int_\R \left ( \rho_h^k - \rho_h^{k-2} \right ) x\dx{x} \bigg | + \bigg | \int_\R \left ( \left (H'(x) - \sigma^k_h \right )  H''(x) -H'''(x) \right ) \rho_h^k\dx{x} \bigg | \right . \\
&\qquad +  \left .  \sup_{x \in \R} \frac{|H'''(x)|}{2} \ \frac{1}{h}W_2^2(\rho_h^{k-1} , \rho_h^k ) \right ) \\
&\leq  \Bigg(\bigg | \frac{1}{h^2} \left ( \ell(kh) - \ell((k-2)h) \right ) \bigg |
+ \bigg | \int_\R H'(x) H''(x) \rho_h^k(x) \dx{x} \bigg | \\
&\qquad\quad + \bigg | \sigma_h^k\int_\R H''(x) \rho_h^k\dx{x} \bigg | + \tfrac{1}{2} \sup \limits_\R |H'''(x)| \frac{1}{h} W_2^2(\rho_h^{k-1},\rho_h^k) \Bigg) \\
&=: \text{(I)} +\text{(II)} + \text{(III)} + \text{(IV)}
\end{align*}
Now, each term is bounded by using Assumption~\ref{assume:TD} and the a priori estimate of Lemma~\ref{lem:time-discrete-apriori}. Indeed, it holds $\text{(I)} \leq \sup_{t\in [0,T]} \abs{\ddot{l}(t)} \leq C$ since $\ell \in C^2(\R_+;\R)$. Then, the growth condition on $H$ as stated in~\eqref{assume:TD:H:growth} imply $\text{(II)} \leq C \int_R (1+ \abs{x}) \rho_h^k(x) \dx{x} \leq C$ by \eqref{uniform_bound_on_second_moment}. Likewise the already proven uniform bound on $\sigma_h^k$ in~\eqref{boundedness_of_sigma} and uniform assumption on $H''$ imply $\text{(III)} \leq C$. Finally, the term $\text{(IV)}$ is bounded by the uniform assumption on $H'''$ and the a priori estimate~\eqref{uniform_bound_on_distance_sum}. Hence, we have proven the discrete Lipschitz estimate for any $t,t'\in [0,T]$
\begin{equation}
 \frac{| \tilde{\sigma}_h(t) - \tilde{\sigma}_h(t') |}{|t-t'|} \leq
   \max_{k\in \set{1,\dots,N}} \frac{|\sigma_h^k - \sigma_h^{k-1} |}{h} \leq C \label{uniform_modulus_of_continuity}
\end{equation}
Therefore, by an Arzelà-Ascoli argument exists $\sigma\in C([0,T];\R)$ such that $\tilde{\sigma}_h \rightarrow \sigma$ uniformly on $(0,T)$ along a subsequence. It remains to estimate the error done by the linear interpolation of $\sigma_h(t)$
\begin{equation}
\sup \limits_{t \in (0,T)} |\sigma_h(t) -\tilde{\sigma}_h(t)| \leq \sup \limits_{(k+1)h \leq T} |\sigma_h^{k+1}-\sigma_h^k| \stackrel{\eqref{uniform_modulus_of_continuity}}{\leq} Ch \quad \text{ for all } h>0 .
\end{equation}
\end{proof}

\section{Long time behaviour}\label{s:LT}

In this section we investigate the evolution of the constrained Fokker-Planck equation \eqref{e:cFP} under the assumption that the external forcing becomes constant and under quadratic growth assumption at infinity of the potential \(H\). The general idea is based on exploiting the entropy-dissipation identity~\eqref{e:freeEnergyDissipation}. This strategy was partly also applied in \cite[Chapter 5]{Dreyer2011} and \cite[Chapter 7]{thesis} to derive the qualitative trend to equilibrium.
We complemented this result with a quantitative rate of convergence to equilibrium based on the investigation of suitable relative entropies with respect to local equilibrium sates. Therefore, let us first characterize these states and prove some auxiliary results

We set the parameter $\nu$ to one within the next two sections and discuss the $\nu$-dependence of the constants later in Section~\ref{s:LT:nu}.
\subsection{Local equilibrium states and first properties}\label{s:LT:qual}
For the qualitative long-time behaviour, we make the following slightly stronger assumptions in comparison to Assumption~\ref{assume:TD} on the potential $H$ and forcing term $\ell$.
\begin{assume}\label{assume:LT:qual}
  The function $H\in C^3(\R;\R_+)$ has quadratic growth at infinity such that for two constants $c_{H,\pm} >0$
  \begin{equation}\label{assume:LT:H:quadratic_growth}
    \liminf_{x \rightarrow \pm \infty} H''(x) = c_{H,\pm} \qquad\text{ and }\qquad \lim \limits_{x \rightarrow \pm \infty}H'''(x) = 0 .
  \end{equation}
  The forcing $\ell \in W^{1,\infty}(\R_+; \R)$ becomes stationary
  \begin{equation}\label{assume:LT:l:const}
    \lim \limits_{t \rightarrow \infty} \ell(t) = \ell^*  \quad\text{ and } \quad  \quad \dot\ell \in L^1(\R_+) .
  \end{equation}
  The initial data $\rho^0 \in \cP_2(\R)$ has finite free energy~\eqref{e:def:freeEnergy}, that is $\mathcal{F}(\rho^0) < \infty$.
\end{assume}
A simple example for a potential, that satisfies these assumptions is a double-well-potential with quadratic growth at infinity like $H(x) = (\sqrt{x^2+1}-2)^2$.

Let us introduce a family $\set*{\gamma_{\sigma}}_{\sigma\in \R}$ of probability measures on $\R$ parametrized by the Lagrange multiplier $\sigma$ with density defined by
\begin{equation}\label{def:gamma}
  \gamma_{\sigma}(x) := \frac{1}{Z_{\sigma}} \exp\bra[\big]{ -H(x) + \sigma\, x}  \quad\text{with}\quad Z_{\sigma} := \int \exp\bra[\big]{ -H(x) + \sigma\, x} \dx{x} .
\end{equation}
Note, that under the growth assumption~\eqref{assume:LT:H:quadratic_growth} the partition sum $Z_\sigma$ is finite. Moreover, for a probability density $\gamma\in \cP_2(\R)$, we define its variance by
\begin{equation}\label{e:def:var}
  \Var(\gamma) := \int x^2 \gamma(x) \dx{x} - \bra*{ \int x \gamma(x) \dx{x}}^2 .
\end{equation}
First of all we characterize the energy minimizer for a constant constraint.
\begin{prop}[Constrained free energy minimization]\label{prop:constraint:energy:min}
  The constrained minimization of the free energy functional~$\mathcal{F}$ as given in~\eqref{e:def:freeEnergy} over the constrained manifold $\cM^{\ell}$ defined in~\eqref{def:constraint:mfd} has a unique minimum given by $\gamma_{\lambda(\ell)}$ as in~\eqref{def:gamma}. Here, the function $\lambda : \R \to \R$ is implicitly defined as the solution to
  \begin{equation}\label{def:lambda}
    \forall \ell \in \R : \qquad M_1\bra*{\gamma_{\lambda(\ell)}} \stackrel{!}{=} \ell  .
  \end{equation}
  In addition, by defining the constants
  \begin{equation}\label{def:cvar}
    c_{\Var} := \inf_{\eta\in \R} \Var(\gamma_{\eta}) \qquad \text{and}\qquad C_{\Var}  := \sup_{\eta\in \R} \Var(\gamma_{\eta})
  \end{equation}
  it holds the bi-Lipschitz estimate
  \begin{equation}\label{e:lambda:deriv}
    0 < c_{\Var} \leq \pderiv{M_1\bra*{\gamma_{\lambda}}}{\lambda} = \Var(\gamma_{\lambda}) \leq C_{\Var} < \infty .
  \end{equation}
\end{prop}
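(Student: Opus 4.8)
The plan is to reduce everything to elementary properties of the Gibbs family $\set{\gamma_\sigma}_{\sigma\in\R}$ and then to identify the constrained minimiser through a relative‑entropy identity. Write $m(\sigma):=M_1(\gamma_\sigma)=\int x\,\gamma_\sigma(x)\dx{x}$ and $V_\sigma(x):=H(x)-\sigma x$, so that $\gamma_\sigma=Z_\sigma^{-1}e^{-V_\sigma}$. From~\eqref{def:gamma} one computes $\partial_\sigma\log Z_\sigma=m(\sigma)$ and hence $\partial_\sigma\gamma_\sigma=\bra*{x-m(\sigma)}\gamma_\sigma$. Since $H$ has quadratic growth, $\gamma_\sigma$ has Gaussian‑type tails locally uniformly in $\sigma$, which provides an integrable majorant for $\partial_\sigma\bra*{x\,\gamma_\sigma}$; differentiating under the integral sign then gives $m\in C^1(\R)$ with
\[
 m'(\sigma)=\int x\bra*{x-m(\sigma)}\gamma_\sigma(x)\dx{x}=\Var(\gamma_\sigma)>0 ,
\]
the positivity holding because $\gamma_\sigma$ is a non‑atomic density.

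The core of the argument is the two uniform variance bounds, i.e.\ that $c_{\Var}$ and $C_{\Var}$ in~\eqref{def:cvar} satisfy $0<c_{\Var}\le C_{\Var}<\infty$. For the \emph{lower} bound I would integrate by parts (boundary terms vanishing by the Gaussian decay, as $V_\sigma'=H'-\sigma$ grows at most linearly) to obtain
\[
 \int V_\sigma'(x)\bra*{x-m(\sigma)}\gamma_\sigma(x)\dx{x}=1 \qquad\text{and}\qquad \int\bra*{V_\sigma'}^2\gamma_\sigma\dx{x}=\int H''\,\gamma_\sigma\dx{x} ,
\]
so that Cauchy--Schwarz gives $1\le\Var(\gamma_\sigma)\int H''\gamma_\sigma\dx{x}$; since $H''$ is bounded, $\int H''\gamma_\sigma\dx{x}$ is bounded uniformly in $\sigma$ and hence $c_{\Var}>0$. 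For the \emph{upper} bound, $V_\sigma$ is coercive by~\eqref{assume:LT:H:quadratic_growth}, so it attains its minimum at some $x_\sigma$ with $V_\sigma'(x_\sigma)=0$; using $H''\ge c_H>0$ for $\abs{x}$ large together with the boundedness of $H''$ one obtains a parabolic lower envelope $V_\sigma(x)\ge V_\sigma(x_\sigma)+\tfrac{c_H}{4}\bra*{x-x_\sigma}^2-D$ with $c_H,D$ independent of $\sigma$, and a matching lower bound $Z_\sigma\ge c'\,e^{-V_\sigma(x_\sigma)}$ by integrating $e^{-V_\sigma}$ over $\abs{x-x_\sigma}\le1$; together these give a uniform Gaussian envelope $\gamma_\sigma(x)\le C'\,e^{-c_H(x-x_\sigma)^2/4}$, whence $C_{\Var}=\sup_\sigma\Var(\gamma_\sigma)<\infty$.

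With these bounds, $m$ is $C^1$, strictly increasing with $m'\in[c_{\Var},C_{\Var}]$, and $m(\sigma)\ge m(0)+c_{\Var}\sigma$ for $\sigma\ge0$ while $m(\sigma)\le m(0)+c_{\Var}\sigma$ for $\sigma\le0$; hence $m:\R\to\R$ is a bijection, so $\lambda:=m^{-1}\in C^1(\R)$ is the unique solution of~\eqref{def:lambda}, and the chain rule gives $\partial_\lambda M_1(\gamma_\lambda)=m'(\lambda)=\Var(\gamma_\lambda)$, which together with~\eqref{def:cvar} is exactly~\eqref{e:lambda:deriv}. To identify the minimiser I would use that for any $\sigma\in\R$ and any $\rho\in\cM^\ell$, by $\log\gamma_\sigma=-H+\sigma x-\log Z_\sigma$ and $\int x\,\rho\dx{x}=\ell$,
\[
 \cH\bra[\big]{\rho\,\big|\,\gamma_\sigma}=\cF(\rho)-\log Z_0-\sigma\ell+\log Z_\sigma
\]
(both sides understood in $[0,\infty]$; the asserted minimality is trivial when $\cF(\rho)=\infty$). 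Specialising to $\sigma=\lambda(\ell)$, for which $\gamma_{\lambda(\ell)}\in\cM^\ell$ by~\eqref{def:lambda} and $\cH(\gamma_{\lambda(\ell)}\,|\,\gamma_{\lambda(\ell)})=0$, this becomes $\cF(\rho)-\cF\bra[\big]{\gamma_{\lambda(\ell)}}=\cH\bra[\big]{\rho\,\big|\,\gamma_{\lambda(\ell)}}$, with $\cF(\gamma_{\lambda(\ell)})<\infty$ (Gaussian tails). Since $\cH(\rho\,|\,\gamma_{\lambda(\ell)})\ge0$ with equality iff $\rho=\gamma_{\lambda(\ell)}$ — by Jensen's inequality applied to the strictly convex $t\mapsto t\log t$ against the probability measure $\gamma_{\lambda(\ell)}\dx{x}$ — the density $\gamma_{\lambda(\ell)}$ is the unique minimiser of $\cF$ over $\cM^\ell$.

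I expect the uniform upper bound $C_{\Var}<\infty$ to be the only genuinely non‑routine step: it is precisely where the quadratic‑growth hypothesis~\eqref{assume:LT:H:quadratic_growth} on $H$ enters, and establishing the parabolic lower envelope for $V_\sigma$ with constants independent of $\sigma$ (equivalently, a uniform Poincaré inequality for $\set{\gamma_\sigma}$) needs some care. The lower bound $c_{\Var}>0$, the bijectivity of $m$, and the relative‑entropy computation are then essentially immediate.
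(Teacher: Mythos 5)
Your proposal is correct, and in two places it takes a genuinely different route from the paper's proof.

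For the variance bounds the paper argues very briefly: it observes that $\partial_\lambda M_1(\gamma_\lambda)=\Var(\gamma_\lambda)$ and then appeals to the fact that, under the quadratic-growth hypothesis, $\gamma_\lambda$ approaches a nondegenerate Gaussian as $\lambda\to\pm\infty$, so the variance is bounded above and below on all of $\R$ by a continuity/limit argument. You instead prove the two bounds directly: the lower bound $c_{\Var}>0$ comes from the integration-by-parts identities $\int V_\sigma'(x)(x-m(\sigma))\,\dx{\gamma_\sigma}=1$ and $\int (V_\sigma')^2\,\dx{\gamma_\sigma}=\int H''\,\dx{\gamma_\sigma}$ together with Cauchy--Schwarz and the uniform bound $\abs{H''}\le C$; the upper bound $C_{\Var}<\infty$ comes from a parabolic lower envelope for $V_\sigma$ with $\sigma$-independent constants, which gives a uniform Gaussian majorant for $\gamma_\sigma$. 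This is more work but yields explicit constants in terms of $\min\set{c_{H,+},c_{H,-}}$ and $\sup\abs{H''}$, and it makes transparent exactly where the assumption $\liminf_{x\to\pm\infty}H''>0$ enters. For the identification of the minimiser, the paper first invokes the direct method (strict convexity and weak-$L^1$ lower semicontinuity, citing \cite[Proposition~4.1]{JKO98}) to obtain existence and uniqueness, and then derives the Gibbs form of $\rho^\ell$ from an Euler--Lagrange inequality with Lagrange multipliers. You bypass the direct method entirely: the algebraic identity $\cH(\rho\,|\,\gamma_{\lambda(\ell)})=\cF(\rho)-\cF(\gamma_{\lambda(\ell)})$ for $\rho\in\cM^\ell$ (which is also recorded in the paper's Lemma~\ref{lem:RelEnt:comp} as \eqref{e:FreeEnergy:RelEnt:Ident}) plus strict Jensen directly exhibits $\gamma_{\lambda(\ell)}$ as the unique minimiser. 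This is cleaner and self-contained — it does not require the abstract compactness step — at the small cost of having to know the candidate in advance, which here is not an issue because $\lambda(\ell)$ is constructed independently from the bijectivity of $\lambda\mapsto M_1(\gamma_\lambda)$.

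One minor point worth spelling out if you write this up in full: when establishing the parabolic envelope $V_\sigma(x)\ge V_\sigma(x_\sigma)+\tfrac{c_H}{4}(x-x_\sigma)^2-D$, the minimiser $x_\sigma$ may lie inside the spinodal region $\set{H''\le 0}$; the envelope is nevertheless uniform because $\abs{H''}\le C$ and $\set{H''\le c_H}$ has finite measure, so integrating $V_\sigma''$ twice from $x_\sigma$ outward and absorbing the finite bad interval via Young's inequality gives $D$ depending only on $C$, $c_H$ and $\abs{\set{H''\le c_H}}$, not on $\sigma$.
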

\begin{proof}
The uniqueness of the minimizer follows from strict convexity and weak-\(L^1(\R)\)-lower semicontinuity of $\cF$ over the weakly $L^1(\R)$-closed and convex set $\cM^\ell$ by the direct method of the calculus of variations (cf.~Proposition \ref{Proposition_well_posedness_of_the_scheme} and \cite[Proposition 4.1]{JKO98}).

To characterize the minimizer $\rho^\ell$, we use the convexity of $x \mapsto x \log(x)$ on \(\R_+\) and obtain
\begin{align}
x \log(x) \geq y \log(y) + (\log(y)+1)(x-y) \quad \text{ for } x \geq 0 , y >0 .
\end{align}
Then, for all $\rho \in \cM^\ell$, we find
\begin{align*}
\mathcal{F}(\rho) &=  \int \bra*{ \rho \log \rho   + H \rho } \dx{x}
 \geq \int \bra*{ \rho^\ell \log \rho^\ell + \bra[\big]{ \log(\rho^\ell) +1 } (\rho-\rho^\ell) + H \rho}  \dx{x} \\
 &= \mathcal{F}(\rho^\ell) + \int \left ( \log(\rho^\ell) +H(x) - \lambda(\ell)x - z(\ell) \right ) (\rho - \rho^\ell) \dx{x}
\end{align*}
Here, by using the fact that $\int \bra[\big]{\lambda(\ell) x+z(\ell) } (\rho -\rho^\ell) \dx{x} =0$ for $\rho , \rho^\ell \in \cM^\ell$, we introduced two additional Lagrange multipliers $z(\ell), \lambda(\ell)\in \R$ corresponding to the conversation of total mass and the constraint $M_1(\rho)=\ell$, respectively. Since the lower bound has to hold for all $\rho\in \cM^\ell$, we obtain that $\log(\rho^\ell) +H(x) - \lambda(\ell)x - z(\ell)=0$ and by uniqueness in $\cM^\ell$, $\rho^\ell$ is exactly of the form $\rho^{\ell} = \gamma_{\lambda(\ell)}$ as defined in~\eqref{def:gamma} and $\ell\mapsto \lambda(\ell)$ yet to be determined. Therefore, it remains to show that for any $\ell \in \R$ there is a unique $\lambda(\ell)$, such that $M_1(\gamma_{\lambda(\ell)}) = \ell$. We calculate the derivative of~$\lambda\mapsto M_1(\gamma_{\lambda})$
\begin{align*}
\pderiv{M_1(\gamma_{\lambda})}{\lambda} &= \pderiv{}{\lambda} \int  x \, \gamma_{\lambda}(x) \dx{x} = \pderiv{}{\lambda} \int  x \, \frac{\exp\bra*{-H(x)+\lambda\, x}}{Z_{\lambda}} \dx{x} \\
&= \int x^2 \, \gamma_\lambda(x) \dx{x} - \bra*{ \int x \, \gamma_\lambda(x) \dx{x}}^2 = \Var(\gamma_\lambda) .
\end{align*}
Assumption~\ref{assume:LT:qual} ensures that the constants $c_{\Var}$ and $C_{\Var}$ defined in~\eqref{def:cvar} are positive and finite, respectively.
Indeed, for $\lambda \to \pm \infty$, the quadratic growth~\eqref{assume:LT:H:quadratic_growth} ensures that~$\gamma_{\lambda}$ converges to a Gaussian distribution with standard deviation $c_{\pm}^{-1}$, respectively, implying a finite and positive variance. Therefore, the function $\lambda \to M_1(\gamma_{\lambda})$ is uniformly bi-Lipschitz and especially strictly monotone with $M_1(\gamma_{\lambda}) \to \pm\infty$ as~$\lambda\to \pm \infty$.
\end{proof}
It is convenient to consider the relative entropy as given in~\eqref{e:def:RelEnt} with respect to the measures $\gamma_{\sigma}$ as defined in~\eqref{def:gamma}.

By comparing the above definition with the free energy~\eqref{e:def:freeEnergy}, we observe the identity $\cH(\rho | \gamma_0) = \cF(\rho)$. We need to compare the relative entropies with the dissipation. The according energy--dissipation inequality is called a logarithmic Sobolev inequality and is well studied in the literature (cf.~\cite{Gross1975,Ledoux1999,Arnold2001,MS12}).
\begin{lem}[Logarithmic Sobolev inequality]\label{lem:LSI}
  Under Assumption~\ref{assume:LT:qual}, the measure $\gamma_{\sigma}$ from~\eqref{def:gamma} satisfies for any $\sigma \in \R$ a logarithmic Sobolev inequality with constant $C_{\LSI}(\sigma)>0$, that is for all $\rho\in \cP_2(\R)$ with $\cD(\rho,\sigma)<\infty$ it holds
  \begin{equation}\label{def:LSI}
    \cH(\rho | \gamma_{\sigma}) \leq C_{\LSI}(\sigma) \; \cD(\rho,\sigma)
  \end{equation}
  Moreover, for any $M>0$ exists $C_{\LSI,M}< \infty$ such that
  \begin{equation}\label{e:LSI:constant}
    \sup_{\abs{\sigma}\leq M} C_{\LSI}(\sigma) \leq C_{\LSI,M} .
  \end{equation}
  Moreover, if $H$ is uniformly convex, that is if for some $k>0$ it holds $\inf_{x\in\R} H''(x) \geq k$, then $C_{\LSI} = \frac{1}{k}$ independent of $\sigma$.
\end{lem}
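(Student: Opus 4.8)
The plan is to read the claimed estimate as a classical logarithmic Sobolev inequality for the Gibbs measure $\gamma_\sigma$ and then to prove it by combining the Bakry--Émery criterion with the Holley--Stroock perturbation principle. The first step is a reformulation: since $\nu=1$ and, by~\eqref{def:gamma}, $\partial_x\log\gamma_\sigma = -H'+\sigma$, the dissipation~\eqref{e:def:Dissipation} is exactly the relative Fisher information
\begin{equation*}
  \cD(\rho,\sigma) = \int \abs[\big]{\partial_x\log\bra*{\rho/\gamma_\sigma}}^2 \rho \dx{x} ,
\end{equation*}
so~\eqref{def:LSI} asks precisely that $\gamma_\sigma = Z_\sigma^{-1}e^{-V_\sigma}$, with potential $V_\sigma(x):=H(x)-\sigma x$ and $V_\sigma''=H''$, satisfy a logarithmic Sobolev inequality. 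I would first prove the inequality for $\rho$ with bounded density and finite Fisher information and then pass to general $\rho\in\cP_2(\R)$ with $\cD(\rho,\sigma)<\infty$ by the usual truncation argument; this step also yields $\cH(\rho | \gamma_\sigma)<\infty$ in that case, so that the asserted inequality is meaningful.

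The uniformly convex case is then immediate: if $\inf_x H''(x)\geq k>0$, then $V_\sigma''\geq k$ uniformly in $\sigma$, and the Bakry--Émery criterion gives~\eqref{def:LSI} with the explicit constant $C_\LSI(\sigma)=1/k$, independent of $\sigma$.

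For the general case I would exploit that, by~\eqref{assume:LT:H:quadratic_growth}, there are $R>0$ and $\kappa:=\tfrac{1}{2}\min\set{c_{H,+},c_{H,-}}>0$ with $H''(x)\geq\kappa$ for $\abs{x}\geq R$. The main step is to construct a single $C^2$ convexification $W$ of $H$: a function with $W=H$ outside a compact interval $[-R',R']$, with $W''\geq\kappa$ everywhere, and with $\psi:=H-W$ bounded and \emph{independent of $\sigma$}. Such a $W$ exists once $R'$ is taken large enough, because the quadratic growth of $H$ makes $H'(R')-H'(-R')$, together with the corresponding centred second moment of $H''$ on $[-R',R']$, large enough that one can prescribe a $C^2$ function with $W''\geq\kappa$ on $[-R',R']$ matching the values and first derivatives of $H$ at $\pm R'$; boundedness of $\psi$ is then automatic, as $H$ and $W$ are continuous on a compact interval. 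Setting $\tilde V_\sigma:=W-\sigma x$ and $\tilde\gamma_\sigma:=\tilde Z_\sigma^{-1}e^{-\tilde V_\sigma}$ (which is a well-defined probability density, since $\tilde V_\sigma$ still grows quadratically), we have $\tilde V_\sigma''=W''\geq\kappa$, so Bakry--Émery gives~\eqref{def:LSI} for $\tilde\gamma_\sigma$ with constant $1/\kappa$; and since $\gamma_\sigma = (\tilde Z_\sigma/Z_\sigma)\,e^{-\psi}\,\tilde\gamma_\sigma$ with $\psi$ bounded, the Holley--Stroock perturbation lemma transfers this to $\gamma_\sigma$ with
\begin{equation*}
  C_\LSI(\sigma) \leq \tfrac{1}{\kappa}\exp\bra[\big]{\operatorname{osc}\psi} .
\end{equation*}
Because $\psi$ does not depend on $\sigma$, the right-hand side is finite and uniform over \emph{all} $\sigma\in\R$; in particular~\eqref{e:LSI:constant} follows, even without restricting to $\abs{\sigma}\leq M$.

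The part I expect to require the most care is the construction of the $\sigma$-independent convexification $W$ and, with it, the uniformity of the constant. What makes the uniformity essentially free is the observation that all the $\sigma$-dependence of $V_\sigma$ sits in the affine term $-\sigma x$, which neither alters convexity nor touches the region $\{H''<\kappa\}$ that must be modified; hence the perturbation $\psi$ can be chosen to depend on $H$ alone, and its oscillation bounds $\kappa\,C_\LSI(\sigma)$ uniformly. (Alternatively one could invoke a one-dimensional Muckenhoupt/Bobkov--Götze type criterion directly on the tails of $\gamma_\sigma$, but the perturbation route keeps the dependence on $\sigma$ most transparent.)
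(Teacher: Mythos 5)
Your proposal is correct and follows the same basic route as the paper: decompose the effective potential into a uniformly convex part plus a bounded perturbation, apply the Bakry--\'Emery criterion to the former, and transfer to $\gamma_\sigma$ by Holley--Stroock (the paper packages both steps via \cite[Corollaries~1.6, 1.7]{Ledoux1999}). The one place you genuinely diverge is in how the decomposition depends on $\sigma$, and your variant is cleaner. The paper decomposes $H_\sigma = H_{c,\sigma} + H_{b,\sigma}$ separately for each $\sigma$, obtaining a $\sigma$-dependent oscillation bound $C_{H,\sigma}$, and then establishes~\eqref{e:LSI:constant} by arguing that $\sigma\mapsto C_{\LSI}(\sigma)$ depends continuously on $\sigma$ and is hence bounded on compacts. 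You instead observe that only $H$ needs to be modified: since the $\sigma$-dependence enters through the affine term $-\sigma x$, which has no effect on second derivatives and may simply be absorbed into the convex part $W-\sigma x$, the perturbation $\psi=H-W$ can be chosen once and for all, independent of $\sigma$. This makes the constant $\tfrac{1}{\kappa}e^{\operatorname{osc}\psi}$ uniform over \emph{all} $\sigma\in\R$ (not merely locally bounded), so~\eqref{e:LSI:constant} holds with a single constant for every $M$, including $M=\infty$; the paper's continuity argument becomes superfluous. Two minor remarks on the construction of $W$: to interpolate $H'$ across $[-R',R']$ with $W''\geq\kappa$ and at the same time match $W(\pm R')=H(\pm R')$ you have two constraints (the endpoint values of $W'$ and the integral $\int_{-R'}^{R'}W'$), and both have enough slack once $R'$ is large, since $H'(R')-H'(-R')=\int_{-R'}^{R'}H''$ grows like $(c_{H,+}+c_{H,-})R' > 2\kappa R'$; and $C^{1,1}$ regularity of $W$ (i.e.\ $W''\geq\kappa$ a.e.) already suffices for Bakry--\'Emery in one dimension, so you need not insist on $C^2$ matching at $\pm R'$.
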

\begin{proof}
  Using the function $H_\sigma(x) := H(x) - \sigma x$, we can write $\gamma_{\sigma} = e^{-H_\sigma}/Z_\sigma$ with $Z_\sigma$ as defined in~\eqref{def:gamma}. Then
  by Assumption~\eqref{assume:LT:H:quadratic_growth} on the growth of $H$ it follows, that there exists a decomposition of $H_{\sigma} = H_{c,\sigma} + H_{b,\sigma}$ into two functions $H_{c,\sigma}, H_{b,\sigma} : \R \to \R$ such that $H_{c,\sigma}$ is uniformly convex and $H_{b,\sigma}$ is compactly supported and bounded such that
  \begin{equation*}
    \inf_{x\in \R} H''_{c,\sigma}(x) \geq \frac{\min\set{c_{H,+},c_{H,-}}}{2} \quad\text{and} \quad  \sup_{x\in \R} H_{b,\sigma} - \inf_{x\in \R} H_{b,\sigma} \leq C_{H,\sigma} .
  \end{equation*}
  The measure $\gamma_{\sigma}$ is of the form such that \cite[Corollary 1.7]{Ledoux1999} can be applied and we conclude
  \begin{equation*}
    C_{\LSI}(\sigma) \leq \frac{2 e^{2 C_{H,\sigma}}}{\min\set{c_{H,-},c_{H,+}}}.
  \end{equation*}
  Statement~\eqref{e:LSI:constant} is an immediate consequence, since $C_{H,\sigma}$ and in particular $C_{\LSI}(\sigma)$ depends by the smoothness of $H$ continuously on $\sigma$ and hence is bounded on each compact interval.
  Finally, in the convex case, we can directly apply \cite[Corollary 1.6]{Ledoux1999}.
\end{proof}
To apply the logarithmic Sobolev inequality, we have to ensure that $\sigma$ is bounded, which is the content of the following lemma.
\begin{lem}\label{lem:bound:M2sigma}
Under Assumption \eqref{assume:LT:qual}, it holds for any solution to \eqref{e:cFP} with constraint~\eqref{lagrange_multiplier}
\begin{enumerate}[ (i) ]
 \item The second moment and the Lagrange multiplier $\sigma$ remain bounded, that is for some $C>0$ we have
\begin{align}\label{e:bound:M2sigma}
  \|M_2(\rho(\cdot))\|_{L^\infty(\R_+)} \leq C  \qquad\text{and}\qquad  \|\sigma\|_{L^\infty(\R_+)} \leq C  .
\end{align}
\item The free energy along solutions remains bounded, that is for some $C>0$ it holds
\begin{align}\label{energy_and_second_moment_bound_long_time}
 \| \mathcal{F}(\rho(\cdot))\|_{L^\infty(\R_+)} \leq C .
\end{align}
\item For any sequence $\set{t_n}_{n\in\N}$ with $t_n \to \infty$ there exists a subsequence $t_{n(k)}$ such that
\begin{align}\label{weak_convergence_for_subsequences_long_time}
\begin{split}
 \rho_{t_{n(k)}} \rightharpoonup \rho^* \quad \text{ in } L^1(\R)
 \qquad\text{ with }\qquad \rho^* \in \cM^{\ell^*} .
 \end{split}
\end{align}
\end{enumerate}
\end{lem}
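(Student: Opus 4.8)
The plan is to prove the three assertions in order: the first by a Grönwall estimate on the second moment, the second by integrating the energy--dissipation identity~\eqref{e:freeEnergyDissipation}, and the third by a weak compactness argument in $L^1(\R)$.

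\emph{Step 1 (bounds on $M_2$ and $\sigma$).} Working directly from the weak formulation and testing~\eqref{e:cFP} (with $\tau=\nu=1$) against $\zeta(x)=x^2$ — legitimate after a spatial cut-off since $M_2(\rho(t))<\infty$ and $\abs{H'(x)}\leq C(1+\abs{x})$ — an integration by parts gives
\begin{equation*}
  \pderiv{M_2(\rho(t))}{t} = 2 - 2\int x\,H'(x)\,\rho(t,x)\dx{x} + 2\,\sigma(t)\,\ell(t) .
\end{equation*}
By~\eqref{assume:LT:H:quadratic_growth} there are $c>0$ and $C<\infty$ with $x\,H'(x)\geq c\,x^2 - C$ for all $x$, so the middle term is at most $-2c\,M_2(\rho(t))+2C$. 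On the other hand, from the expression~\eqref{lagranian_multiplier_in_introduction} for $\sigma$, together with $\abs{H'(x)}\leq C(1+\abs{x})$, Cauchy--Schwarz and $\ell\in W^{1,\infty}$, one obtains $\abs{\sigma(t)}\leq C\bra[\big]{1+\sqrt{M_2(\rho(t))}}$, hence $2\sigma(t)\ell(t)\leq C\bra[\big]{1+\sqrt{M_2(\rho(t))}}$. Inserting both estimates and absorbing $\sqrt{M_2}$ into $M_2$ by Young's inequality yields a closed differential inequality $\pderiv{M_2(\rho(t))}{t}\leq -c\,M_2(\rho(t))+C$; since $M_2(\rho^0)<\infty$, Grönwall's lemma gives $M_2(\rho(t))\leq M_2(\rho^0)\,e^{-ct}+C/c$, which is the first bound in~\eqref{e:bound:M2sigma}, and feeding it back into $\abs{\sigma(t)}\leq C(1+\sqrt{M_2(\rho(t))})$ gives the second.

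\emph{Step 2 (bound on the free energy).} By Step 1 there is $M<\infty$ with $\abs{\sigma(t)}\leq M$ for all $t\geq 0$. Using the regularity~\eqref{regularity} and the chain rule mentioned in the remark after Theorem~\ref{existence_of_weak_solutions} to integrate~\eqref{e:freeEnergyDissipation} (with $\tau=1$) from $0$ to $t$, and discarding the nonnegative dissipation, one finds
\begin{equation*}
  \cF(\rho(t)) \leq \cF(\rho^0) + \int_0^t \sigma(s)\,\dot\ell(s)\dx{s} \leq \cF(\rho^0) + M\,\norm{\dot\ell}_{L^1(\R_+)} < \infty
\end{equation*}
by Assumption~\ref{assume:LT:qual}; this is~\eqref{energy_and_second_moment_bound_long_time}. (Alternatively one may pass to the limit $h\to0$ in the discrete energy estimate of Section~\ref{S:time_discrete} to obtain the same bound without invoking the chain rule.)

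\emph{Step 3 (weak compactness and the limit).} The uniform bound on $M_2$ makes $\set{\rho(t)}_{t\geq0}$ tight. Writing $\cF=S+E+\log Z_0$ with $E\geq0$, Step 2 bounds $S(\rho(t))$ from above, and combined with the standard entropy lower bound $S(\rho)\geq -C(M_2(\rho)+1)^\alpha$ for $\alpha\in(\tfrac13,1)$ (as in~\cite{JKO98}) this controls $\int\max\set{\rho(t)\log\rho(t),0}$ uniformly in $t$; since $s\mapsto\max\set{s\log s,0}$ is superlinear, the family is equi-integrable, and by the Dunford--Pettis theorem any sequence $\rho_{t_n}$ with $t_n\to\infty$ has a subsequence converging weakly in $L^1(\R)$ to some $\rho^*$, which is a probability density with $M_2(\rho^*)\leq C$. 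Finally, the second-moment bound makes $\set{x\,\rho(t)}$ uniformly integrable, since $\int_{\abs{x}\geq R}\abs{x}\,\rho(t,x)\dx{x}\leq R^{-1}M_2(\rho(t))$; hence one may pass to the limit in $\int x\,\rho_{t_{n(k)}}\dx{x}=\ell(t_{n(k)})$ and, using $\ell(t_{n(k)})\to\ell^*$ from~\eqref{assume:LT:l:const}, conclude $M_1(\rho^*)=\ell^*$, i.e.\ $\rho^*\in\cM^{\ell^*}$, which proves~\eqref{weak_convergence_for_subsequences_long_time}.

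The main obstacle is Step 1: one has to balance the coercivity coming from the quadratic growth of $H$ against the \emph{nonlocal} term $\sigma(t)\ell(t)$, the delicate point being that $\sigma$ is itself controlled only by $\sqrt{M_2}$, and — at the level of rigour — one must justify testing the weak equation against the unbounded function $x^2$. Since the regularity~\eqref{regularity} used in Step 2 is, in~\cite{HNV14}, itself a consequence of the bounds proved here, the cleanest logical order is to derive the $M_2$– and $\sigma$–bounds of Step 1 directly from the weak formulation (or at the level of the time-discrete scheme of Section~\ref{S:time_discrete}, where finiteness of the second moment is built in) and only afterwards invoke~\eqref{regularity}.
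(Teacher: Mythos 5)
Your proposal follows essentially the same line of attack as the paper: for (i) you spell out in detail the differential-inequality argument that the paper delegates to \cite{HNV14}; for (ii) you integrate the energy--dissipation identity~\eqref{e:freeEnergyDissipation}; for (iii) you combine tightness from the second-moment bound with equi-integrability via the superlinear entropy density and Dunford--Pettis. The one genuine gap is in Step~2: the claim~\eqref{energy_and_second_moment_bound_long_time} is a \emph{two-sided} $L^\infty$ bound on $\cF(\rho(\cdot))$, but you prove only the upper bound $\cF(\rho(t))\leq \cF(\rho^0)+\norm{\sigma}_{L^\infty}\norm{\dot\ell}_{L^1}$ and then assert the $L^\infty$ statement. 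The lower bound must be supplied as well; it follows immediately from $E\geq 0$, the constant $\log Z_0$, and the entropy estimate $S(\rho)\geq -C\bra*{M_2(\rho)+1}^{\alpha}$ that you invoke anyway in Step~3, combined with the uniform $M_2$ bound from Step~1 --- but as written this step is incomplete. On Step~1, your closed Gr\"onwall inequality (substituting $\abs{\sigma}\lesssim 1+\sqrt{M_2}$ into $\partial_t M_2$ and absorbing via Young) is slightly more self-contained and quantitative than the bootstrap the paper sketches, which first bounds $M_2$ in terms of $\norm{\sigma}_{L^\infty}$ by a comparison principle for scalar ODEs and only then closes against $\abs{\sigma}\lesssim 1+\sqrt{M_2}$; both reduce to the same estimate and use the same coercivity $xH'(x)\geq c x^2 - C$ coming from~\eqref{assume:LT:H:quadratic_growth}. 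Your closing caveat about logical ordering --- that the regularity~\eqref{regularity} presupposes the bounds of (i), so one should derive (i) from the weak formulation with cut-offs or at the level of the discrete scheme --- is accurate, and is precisely why the paper defers (i) to \cite{HNV14} rather than invoking the chain rule at that stage.
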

\begin{proof}
\emph{(i):} This is content of~\cite[Appendix A, Proposition 2]{HNV14}. The basic idea is to use a comparison principle for scalar ODEs and the quadratic growth
Assumption~\eqref{assume:LT:H:quadratic_growth} on $H$, to establish
the bound $M_2\bra*{\rho(t,\cdot)} \leq C\bra[\big]{1+ \norm{\sigma}_{L^\infty(\R_+)}}$. By using again the explicit quadratic growth, the equation~\eqref{lagranian_multiplier_in_introduction}
yields $\abs{\sigma(t)} \leq C\bra[\big]{1+ M_2(\rho(t,\cdot))}$. Hence, together it follows that $\norm{\sigma}_{L^\infty(\R_+)}
\leq C \bra[\big]{1+ \norm{\sigma}_{L^\infty(\R_+)}}^{1/2}$.

\emph{(ii):} The upper bound for the free energy follows from the energy-dissipation identity~\eqref{e:freeEnergyDissipation}
\begin{align}
\mathcal{F}(\rho(t)) \leq \mathcal{F}(\rho^0) + \int_0^\infty \sigma(t) \dot\ell(t) \dx{t} \leq \mathcal{F}(\rho^0) + \|\sigma\|_{L^\infty(\R_+)} \|\dot\ell\|_{L^1(\R_+)} .
\end{align}
The lower bound for the free energy follows by estimating the negative part of the entropy function in terms of the second moment. For any $\alpha \in ( \tfrac{1}{3} , 1)$ exists $C>0$ such that $\int \rho \log \rho \geq -C (M_2(\rho) +1)^\alpha$ (cf.\ \cite[Equation (14)]{JKO98}), which together with $(i)$ gives the lower bound.

\emph{(iii):}
The sequence \((\rho(t_n))_{n \in \mathbb{N}}\) is uniformly integrable. Indeed, we obtain from~\eqref{energy_and_second_moment_bound_long_time} the bound
\begin{align}\label{auxiliary_estimate_bound_on_positive_part_of_entropy_long_time}
\int \bra[\big]{\rho(t_n) \log(\rho(t_n))}_+ \dx{x} \leq C ,
\end{align}
where for $a\in \R$, $\bra{a}_+ := \max\set{a,0}$ denotes the positive part.
This estimate implies uniform integrability of \((\rho(t_n))_{n \in \mathbb{N}}\) via
\begin{align}
\int_{\{|\rho(t_n)| >M\}} \rho(t_n,\dx{x}) &\leq \frac{1}{\log(M)} \int_{\{|\rho(t_n)| >M\}} \bra[\big]{\rho(t_n) \log(\rho(t_n))}_+  \dx{x} \leq \frac{C}{\log(M)} ,
\end{align}
where $M>e$ is arbitrary.
Therefore, we find a subsequence \((\rho(t_{n_k}))_{k \in \mathbb{N}}\) such that $\rho(t_{n_k}) \rightharpoonup \rho^*$  in $L^1(\R)$. It remains to show \(\rho^* \in \cM^{\ell^*}\).

The uniform bound on the second moment in \eqref{e:bound:M2sigma} implies tightness, which implies $\int_\R \rho^* \dx{x} = 1$. Similarly, the convergence of the first moment and boundedness of the second moment follow from a standard truncations argument from the bound~\eqref{e:bound:M2sigma}. This shows $\rho^* \in \cM^{\ell^*}$ and finishes the proof.
\end{proof}
\begin{rem}
  From the results of Lemma~\ref{lem:bound:M2sigma}, it is possible to identify the limit $\rho^*$ as $\gamma_{\lambda(\ell^*)}$ and to show that $\rho(t_n) \rightharpoonup \gamma_{\lambda(\ell^*)}$ as well as $\cF(\rho(t_n)) \to \cF(\gamma_{\lambda(\ell^*)})$ (cf.~\cite[Chapter 7]{thesis}) for some sequence $(t_n)_{n\in\N}$.
\end{rem}

\subsection{Convergence to equilibrium in relative entropy}\label{s:LT:quant}

The previous section indicates, that under Assumption~\ref{assume:LT:qual} the free energy of solutions $\cF(\rho(t))$ converges to $\cF(\gamma_{\sigma^*})$ whenever the constraint becomes constant $\ell(t)\to \ell^*$ with $\sigma^*$ such that $\int x \gamma_{\sigma^*} = \ell^*$.
This suggests, that the relative entropy $\cH(\rho(t)| \gamma_{\sigma^*}) \to 0$ as $t\to \infty$ as defined in~\eqref{e:def:RelEnt}. In order to prove this, one could seek for a differential inequality involving the relative entropy with respect to~$\gamma_{\sigma^*}$. However, a direct approach in this direction needs to show convergence of the Lagrangian multiplier $\sigma$, since terms involving $\sigma - \sigma^*$ would occur along the calculation.

To avoid the occurrence of terms involving $\sigma$, which cannot easily be controlled, we introduce a quasistationary equilibrium following the constraint, which is given by $\gamma_{\lambda(\ell(t))}$ with $\lambda(\ell)$ as defined in Proposition~\ref{prop:constraint:energy:min}. It turns out, that the relative entropy with respect~$\gamma_{\lambda(\ell(t))}$ allows for a control without the need to show convergence of~$\sigma$.

The first observation is the following relative entropy comparison as well as a comparison of certain free energy differences with relative entropy.
\begin{lem}\label{lem:RelEnt:comp}
  For all $\eta \in \R$, all $\ell \in \R$ and all $\rho \in \cM^\ell$ it holds
  \begin{equation}\label{e:RelEnt:lambda}
    \frac{c_{\Var}}{2} \bra*{ \eta - \lambda(\ell)}^2 \leq \cH\bra*{\rho | \gamma_\eta} - \cH\bra*{\rho | \gamma_{\lambda(\ell)}} \leq \frac{C_{\Var}}{2} \bra*{ \eta - \lambda(\ell)}^2
  \end{equation}
  as well as
  \begin{equation}\label{e:FreeEnergy:RelEnt:Ident}
    \cF(\rho) - \cF(\gamma_\eta) - \cH(\rho | \gamma_\eta) = \eta \bra[\big]{ \ell - M_1(\gamma_{\eta})} .
  \end{equation}
  Moreover, for any $\ell^*\in \R$, any $\ell \in \R$ and all $\rho \in \cM^\ell$ it holds
  \begin{equation}\label{e:RelEnt:lambda:cor}
    \cH(\rho | \gamma_{\lambda(\ell^*)}) \leq \cH(\rho | \gamma_{\lambda(\ell)}) + \frac{C_{\Var}}{2 c_{\Var}^2} \abs*{ \ell^* - \ell}^2 .
  \end{equation}
\end{lem}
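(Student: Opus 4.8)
The plan is to reduce the entire lemma to elementary properties of the single convex function $\phi(\sigma) := \log Z_\sigma$. Differentiating under the integral sign --- legitimate by the quadratic growth in Assumption~\ref{assume:LT:qual}, exactly as in Proposition~\ref{prop:constraint:energy:min} --- one gets $\phi'(\sigma) = M_1(\gamma_\sigma)$ and $\phi''(\sigma) = \Var(\gamma_\sigma)$, so that $c_{\Var} \le \phi'' \le C_{\Var}$ and, by the defining relation~\eqref{def:lambda}, $\phi'(\lambda(\ell)) = \ell$ for every $\ell$. The second preliminary I would record is the algebraic identity obtained by inserting $\log \gamma_\sigma = -H + \sigma x - \phi(\sigma)$ into~\eqref{e:def:RelEnt}, namely
\begin{equation*}
  \cH(\rho | \gamma_\sigma) = S(\rho) + E(\rho) - \sigma M_1(\rho) + \phi(\sigma) ,
\end{equation*}
and, comparing with $\cF(\rho) = S(\rho) + E(\rho) + \log Z_0$, the companion identity $\cF(\rho) - \cH(\rho | \gamma_\sigma) = \log Z_0 - \phi(\sigma) + \sigma M_1(\rho)$.

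For~\eqref{e:RelEnt:lambda} I would fix $\ell$ and $\rho \in \cM^\ell$ (so $M_1(\rho) = \ell$) and subtract the two instances of the first identity at $\sigma = \eta$ and $\sigma = \lambda(\ell)$: the terms $S(\rho) + E(\rho)$ cancel, all dependence on $\rho$ disappears, and using $\phi'(\lambda(\ell)) = \ell$ one is left with
\begin{equation*}
  \cH(\rho | \gamma_\eta) - \cH(\rho | \gamma_{\lambda(\ell)}) = \phi(\eta) - \phi(\lambda(\ell)) - \phi'(\lambda(\ell)) \bra*{\eta - \lambda(\ell)} .
\end{equation*}
This is the Bregman divergence of $\phi$ between $\eta$ and $\lambda(\ell)$, and Taylor's theorem with integral remainder together with $c_{\Var} \le \phi'' \le C_{\Var}$ (using $\int_0^1 (1-t)\dx{t} = \tfrac12$) gives precisely the two-sided bound. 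For~\eqref{e:FreeEnergy:RelEnt:Ident} I would apply the companion identity $\cF(\cdot) - \cH(\cdot | \gamma_\eta) = \log Z_0 - \phi(\eta) + \eta M_1(\cdot)$ once to $\rho$ and once to $\gamma_\eta$ (noting $\cH(\gamma_\eta | \gamma_\eta) = 0$) and subtract; the constants $\log Z_0 - \phi(\eta)$ cancel and $\eta\bra*{M_1(\rho) - M_1(\gamma_\eta)} = \eta\bra*{\ell - M_1(\gamma_\eta)}$ remains.

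Finally,~\eqref{e:RelEnt:lambda:cor} follows by invoking~\eqref{e:RelEnt:lambda} with $\eta = \lambda(\ell^*)$, which bounds $\cH(\rho | \gamma_{\lambda(\ell^*)}) - \cH(\rho | \gamma_{\lambda(\ell)})$ by $\tfrac{C_{\Var}}{2}\bra*{\lambda(\ell^*) - \lambda(\ell)}^2$, and then observing that by~\eqref{e:lambda:deriv} the map $\lambda$ is the inverse of $\ell \mapsto M_1(\gamma_\lambda)$ and hence Lipschitz with constant $1/c_{\Var}$, so that $\bra*{\lambda(\ell^*) - \lambda(\ell)}^2 \le c_{\Var}^{-2}\abs{\ell^* - \ell}^2$; substituting yields the stated inequality. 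I do not expect a genuine obstacle here: once $\phi = \log Z_\sigma$ and the decomposition $\cH(\rho | \gamma_\sigma) = S(\rho) + E(\rho) - \sigma M_1(\rho) + \phi(\sigma)$ are in place, everything is Taylor expansion plus the already-established variance bounds. The only mild care needed is the justification of differentiating under the integral for $\phi$ (covered by the quadratic growth assumption) and keeping track of which quantities are evaluated at $\rho$ versus at the Gibbs states $\gamma_\eta$ and $\gamma_{\lambda(\ell)}$.
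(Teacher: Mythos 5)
Your proof is correct and follows essentially the same route as the paper: the paper also reduces everything to the convexity of $\log Z_\sigma$ (it writes the entropy gap as $\int_{\lambda(\ell)}^\eta\!\int_{\lambda(\ell)}^\xi \Var(\gamma_\theta)\dx\theta\dx\xi$, which is precisely your Bregman divergence of $\phi=\log Z_\sigma$ in Taylor integral-remainder form), derives \eqref{e:FreeEnergy:RelEnt:Ident} by the same cancellation, and obtains \eqref{e:RelEnt:lambda:cor} via the same bi-Lipschitz bound \eqref{e:lambda:biLipschitz} on $\lambda$. Your packaging through the single scalar function $\phi$ and the decomposition $\cH(\rho|\gamma_\sigma)=S(\rho)+E(\rho)-\sigma M_1(\rho)+\phi(\sigma)$ is a bit more systematic but mathematically identical.
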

\begin{proof}
 Let us first rewrite for any $\eta,\lambda \in \R$ and all $\rho \in \cM^\ell$ the relative entropy difference
  \begin{align*}
    \cH\bra*{\rho | \gamma_{\eta}} -  \cH\bra{ \rho | \gamma_{\lambda}} &= \log \frac{Z_\eta}{Z_\lambda} + \ell\bra*{ \lambda - \eta}
    = \int_\lambda^\eta \bra*{ \partial_\xi \log Z_{\xi} - \ell } \dx{\xi} \\
    &= \int_{\lambda}^\eta \bra*{M_1(\gamma_\xi) - \ell} \dx{\xi} .
  \end{align*}
  Choosing $\lambda = \lambda(\ell)$ as defined in~\eqref{def:lambda}, it follows $\ell = M_1(\gamma_{\lambda(\ell)})$ and hence
  \begin{align*}
    \cH\bra*{\rho | \gamma_{\eta}} -  \cH\bra{ \rho | \gamma_{\lambda(\ell)}} &= \int_{\lambda(\ell)}^\eta \int_{\lambda(\ell)}^\xi \pderiv{M_1(\gamma_{\theta})}{\theta} \dx{\theta} \dx{\xi}
    = \int_{\lambda(\ell)}^\eta \int_{\lambda(\ell)}^\xi \Var(\gamma_{\theta}) \dx{\theta} \dx{\xi} .
  \end{align*}
  The result~\eqref{e:RelEnt:lambda} follows from the lower and upper bound on the variance~\eqref{def:cvar}.
  For the second identity, we consider in a similar manner the free energy difference $\cF(\rho) - \cF(\gamma_{\eta})$ and the relative entropy $\cH\bra*{\rho | \gamma_{\eta}}$
  \begin{align*}
    \MoveEqLeft{\cF(\rho) - \cF(\gamma_{\eta}) = \int \rho \log \rho + \int H \rho - \int \gamma_{\eta} \log \gamma_{\eta} - \int H \gamma_{\eta} }\\
    &= \int \rho \log \frac{\rho}{\gamma_{\eta}} - \int \rho \log Z_{\eta} + \eta \int x \rho + \int \gamma_{\eta} \log Z_{\eta} - \eta \int x \gamma_{\eta} \\
    &= \cH\bra*{\rho | \gamma_{\eta}} + \eta\bra{ \ell - M_1(\gamma_\eta) } .
  \end{align*}
  The estimate~\eqref{e:RelEnt:lambda:cor} is an immediate consequence of the bound~\eqref{e:lambda:deriv} on $\ell \mapsto \lambda(\ell)$, which implies
  \begin{equation}\label{e:lambda:biLipschitz}
     \frac{\abs{\ell^* - \ell}}{C_{\Var}} \leq  \abs*{\lambda(\ell^*) - \lambda(\ell)} \leq \frac{\abs{\ell^* - \ell}}{c_{\Var}} .
   \end{equation}
\end{proof}
The next Lemma calculates the entropy with respect to some time dependent parametrized steady states $\gamma_{\eta}$ for any $\eta\in C^1(\R_+,\R)$.
\begin{lem}\label{lem:quasistationaryEntDiss}
  Let $\eta \in C^1(\R_+,\R)$ and $\rho$ be a solution to~\eqref{e:cFP} then it holds
  \begin{equation}\label{e:quasistationaryEntDiss}
    \pderiv{}{t} \cH(\rho(t) | \gamma_{\eta(t)}) = - \cD(\rho(t),\sigma(t)) + \dot\ell(t)\bra*{ \sigma(t) - \eta(t)} - \dot\eta(t) \bra*{ \ell(t) - M_1(\gamma_\eta)} .
  \end{equation}
\end{lem}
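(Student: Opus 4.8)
The plan is to reduce the identity to the energy--dissipation relation~\eqref{e:freeEnergyDissipation} by expressing $\cH\bra[\big]{\rho(t) | \gamma_{\eta(t)}}$ as the free energy of $\rho(t)$ plus terms depending on $t$ only through $\eta(t)$ and $\ell(t)$. Concretely, using the explicit form~\eqref{def:gamma}, $\log\gamma_{\eta(t)}(x) = -H(x) + \eta(t)\,x - \log Z_{\eta(t)}$, and inserting this into the definition~\eqref{e:def:RelEnt}, while using that a solution satisfies $\rho(t)\in\cM^{\ell(t)}$ (so $\int x\,\rho(t,x)\dx{x} = \ell(t)$) and that $\int\rho(t)\log\rho(t) + \int H\,\rho(t) = \cF(\rho(t)) - \log Z_0$ by~\eqref{e:def:freeEnergy}, I obtain
\begin{equation*}
  \cH\bra[\big]{\rho(t) | \gamma_{\eta(t)}} = \cF(\rho(t)) - \eta(t)\,\ell(t) + \log Z_{\eta(t)} - \log Z_0 .
\end{equation*}
(The same expression also follows from identity~\eqref{e:FreeEnergy:RelEnt:Ident} of Lemma~\ref{lem:RelEnt:comp} applied at the frozen value $\eta=\eta(t)$, together with $\cF(\gamma_\eta) = \eta\,M_1(\gamma_\eta) - \log Z_\eta + \log Z_0$.)

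Next I would differentiate in $t$ term by term. For the free energy I invoke~\eqref{e:freeEnergyDissipation} with $\tau=1$, i.e.\ $\pderiv{}{t}\cF(\rho(t)) = -\cD(\rho(t),\sigma(t)) + \sigma(t)\,\dot\ell(t)$. Since $\eta,\ell\in C^1(\R_+)$, one has $\pderiv{}{t}\bra*{\eta(t)\,\ell(t)} = \dot\eta(t)\,\ell(t) + \eta(t)\,\dot\ell(t)$. For the partition function, the quadratic growth of $H$ in~\eqref{assume:LT:H:quadratic_growth} guarantees that $\eta\mapsto Z_\eta$ is smooth with differentiation under the integral sign allowed, so that $\partial_\eta\log Z_\eta = Z_\eta^{-1}\int x\,e^{-H(x)+\eta x}\dx{x} = M_1(\gamma_\eta)$ and hence $\pderiv{}{t}\log Z_{\eta(t)} = \dot\eta(t)\,M_1(\gamma_{\eta(t)})$. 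Adding the three contributions and regrouping $\sigma\dot\ell - \eta\dot\ell = \dot\ell\,(\sigma-\eta)$ and $-\dot\eta\,\ell + \dot\eta\,M_1(\gamma_\eta) = -\dot\eta\,(\ell - M_1(\gamma_\eta))$ produces exactly~\eqref{e:quasistationaryEntDiss}.

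The only delicate point — and the one I expect to be the main obstacle — is the rigorous chain rule behind $\pderiv{}{t}\cF(\rho(t))$, that is, upgrading the formal computation~\eqref{e:freeEnergyDissipation} to an identity genuinely valid along the (weak) solution. This is precisely where the improved regularity~\eqref{regularity} and the uniform bounds on $M_2(\rho)$ and on $\sigma$ from Lemma~\ref{lem:bound:M2sigma} enter: they ensure that the entropy and potential-energy parts of $\cF$, as well as $\cD(\rho(t),\sigma(t))$, are finite and differentiable in $t$, legitimating the manipulations above; by contrast, the differentiation of $Z_{\eta(t)}$ and of $\eta(t)\ell(t)$ is routine. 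Once~\eqref{e:freeEnergyDissipation} is available in this rigorous form, the lemma is a direct bookkeeping computation.
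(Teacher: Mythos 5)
Your proof is correct, but it takes a genuinely different route from the paper's. The paper differentiates $\cH(\rho(t)\,|\,\gamma_{\eta(t)})$ directly: it uses $\partial_t\rho = \partial_x\bra*{\rho\,\partial_x\log\tfrac{\rho}{\gamma_\sigma}}$, integrates by parts to produce the bilinear expression $-\int \partial_x\log\tfrac{\rho}{\gamma_\eta}\,\partial_x\log\tfrac{\rho}{\gamma_\sigma}\,\rho$, splits off the dissipation $-\cD(\rho,\sigma)$ by writing $\partial_x\log\tfrac{\rho}{\gamma_\eta}=\partial_x\log\tfrac{\rho}{\gamma_\sigma}+\partial_x\log\tfrac{\gamma_\sigma}{\gamma_\eta}$, and evaluates the cross term $(\sigma-\eta)\int(H'-\sigma)\rho=-(\sigma-\eta)\dot\ell$ via the Lagrange-multiplier relation~\eqref{lagranian_multiplier_in_introduction}. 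You instead observe the algebraic decomposition $\cH(\rho\,|\,\gamma_\eta)=\cF(\rho)-\eta\,\ell+\log Z_\eta-\log Z_0$, treat $\eta(t)\ell(t)$ and $\log Z_{\eta(t)}$ as elementary functions of time, and import the energy--dissipation identity~\eqref{e:freeEnergyDissipation} for the $\cF(\rho(t))$ term. Your version is more modular and exposes the lemma as pure bookkeeping on top of~\eqref{e:freeEnergyDissipation}; the trade-off is that all the analytic content (chain rule along the weak solution, integration by parts, the appearance of $\sigma$ through the Lagrange relation) is hidden inside the black-boxed identity~\eqref{e:freeEnergyDissipation}, whereas the paper's direct computation makes that content visible and is self-contained — indeed it simultaneously re-derives~\eqref{e:freeEnergyDissipation} as the special case $\dot\eta\equiv 0$, $\eta\equiv 0$. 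You correctly flag that the chain rule for $\cF$ along the solution is the genuine regularity issue, which the paper also only addresses via the remark on improved regularity; so the two routes ultimately rest on the same justification.
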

\begin{proof}
  The proof consists in a straightforward calculation using the chain rule and several integrations by parts, where we for the sake of notation neglect the time dependence of $\rho$, $\eta$, $\sigma$ and $\ell$. We calculate
  \begin{align*}
    \pderiv{}{t} \cH\bra*{\rho | \gamma_{\eta}} &= \int \bra*{ \log\frac{\rho}{\gamma_\eta} + 1} \partial_t \rho - \dot\eta \int \rho \partial_\eta \log \gamma_\eta \\
    &= - \int \bra*{\partial_x \log \frac{\rho}{\gamma_\eta}} \; \bra*{\partial_x \log \frac{\rho}{\gamma_\sigma}} \; \rho - \dot \eta \int \bra*{x - \partial_\eta \log Z_\eta} \rho \\
    &= -\cD(\rho,\sigma) - \int \bra*{\partial_x \log \frac{\gamma_{\sigma}}{\gamma_{\eta}}} \; \bra*{\partial_x \log \frac{\rho}{\gamma_{\sigma}}} \; \rho - \dot\eta \bra*{ \ell - M_1(\gamma_\eta)} .
  \end{align*}
  The conclusion follows from
  \begin{align*}
    \int \bra*{\partial_x \log \frac{\gamma_{\sigma}}{\gamma_{\eta}}} \; \bra*{\partial_x \log \frac{\rho}{\gamma_{\sigma}}} \; \rho &= \bra*{\sigma - \eta} \int \bra*{ H' -\sigma} \rho = - \bra*{\sigma - \eta} \; \dot\ell ,
  \end{align*}
  by the equation of the Lagrange multiplier~\eqref{lagranian_multiplier_in_introduction}.
\end{proof}
The identity~\eqref{e:quasistationaryEntDiss} shows that the choice $\eta(t) = \lambda(\ell(t))$ has the advantage that the term involving $\dot\eta$ vanishes.
With this preliminary considerations, we can prove the quantitative long-time behaviour.
\begin{thm}\label{thm:LT:quant}
 Let Assumption~\ref{assume:LT:qual} be satisfied, then for a solution to~\eqref{e:cFP} with constraint~\eqref{lagrange_multiplier} holds
 \begin{equation}\label{e:LT:quant:RelEnt:eta:l2}
   \cH\bra*{\rho(t) | \gamma_{\lambda(\ell(t))}} \leq e^{-\tau t}  \cH\bra*{\rho | \gamma_{\lambda(\ell(0))}}  +  C_{\ell,\sigma} \int_0^t e^{-\tau (t-s)} \abs[\big]{\dot \ell(s)} \dx{s} ,
 \end{equation}
 where $\tau := C_{\LSI,\norm{\sigma}_{L^\infty}}^{-1} > 0$ from Lemma~\ref{lem:LSI} and $C_{\ell,\sigma} :=\norm*{\abs{\lambda(\ell(\cdot))}}_{L^\infty(\R_+)}  + \norm*{\sigma(\cdot)}_{L^\infty(\R_+)} < \infty$. In particular, if $\ell$ satisfies for some $\kappa>0$ and $L_0>0$ and all $t\geq 0$
  \begin{equation}\label{assume:LT:ell:quant}
    \abs{\dot\ell(t)} \leq L_0 e^{-\kappa t},
  \end{equation}
  it holds
  \begin{equation}\label{e:LT:quant:RelEnt:eta}
    \cH\bra*{\rho(t) | \gamma_{\lambda(\ell(t))}} \leq e^{-\tau t} \cH\bra*{\rho | \gamma_{\lambda(\ell(0))}} + C_{\ell,\sigma} \, L_0 \; \frac{e^{-\tau t} - e^{-\kappa t}}{\kappa - \tau}.
  \end{equation}
\end{thm}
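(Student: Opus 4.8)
The plan is to derive a closed differential inequality for $\phi(t) := \cH\bra*{\rho(t) \mid \gamma_{\lambda(\ell(t))}}$ and then integrate it by a Grönwall argument. First I would apply Lemma~\ref{lem:quasistationaryEntDiss} with the time-dependent parameter $\eta(t) = \lambda(\ell(t))$. This is admissible because $\ell \in W^{1,\infty}(\R_+)$ and $\lambda$ is smooth with derivative bounded away from zero by~\eqref{e:lambda:deriv}, so $\lambda(\ell(\cdot))$ is Lipschitz and the chain-rule computation behind~\eqref{e:quasistationaryEntDiss} is valid for a.e.\ $t$. The decisive feature of this choice is that, by the defining relation~\eqref{def:lambda} of $\lambda$, one has $M_1(\gamma_{\lambda(\ell(t))}) = \ell(t)$, so the term $\dot\eta(t)\bra*{\ell(t) - M_1(\gamma_{\eta(t)})}$ in~\eqref{e:quasistationaryEntDiss} vanishes identically and we are left with
\[
  \pderiv{}{t}\phi(t) = -\cD(\rho(t),\sigma(t)) + \dot\ell(t)\bra*{\sigma(t) - \lambda(\ell(t))} \qquad \text{for a.e.\ } t \geq 0 .
\]

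Second I would bound the dissipation from below in terms of $\phi$. Applying the logarithmic Sobolev inequality of Lemma~\ref{lem:LSI} with reference measure $\gamma_{\sigma(t)}$ gives $\cD(\rho(t),\sigma(t)) \geq C_{\LSI}(\sigma(t))^{-1}\,\cH\bra*{\rho(t)\mid\gamma_{\sigma(t)}}$ whenever $\cD(\rho(t),\sigma(t))<\infty$, which holds for a.e.\ $t$ by the energy-dissipation identity together with the bounds of Lemma~\ref{lem:bound:M2sigma}. To turn this into control of $\phi$ I would use the entropy comparison~\eqref{e:RelEnt:lambda} of Lemma~\ref{lem:RelEnt:comp} with $\eta=\sigma(t)$ and $\ell=\ell(t)$: since $\rho(t)\in\cM^{\ell(t)}$, the difference $\cH\bra*{\rho(t)\mid\gamma_{\sigma(t)}} - \cH\bra*{\rho(t)\mid\gamma_{\lambda(\ell(t))}}$ is at least $\tfrac{c_{\Var}}{2}\bra*{\sigma(t)-\lambda(\ell(t))}^2 \geq 0$, hence $\cH\bra*{\rho(t)\mid\gamma_{\sigma(t)}} \geq \phi(t)$. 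Combining with the uniform bound $\norm*{\sigma}_{L^\infty(\R_+)}<\infty$ from Lemma~\ref{lem:bound:M2sigma} and the local boundedness~\eqref{e:LSI:constant} of $C_{\LSI}$, one obtains $\cD(\rho(t),\sigma(t)) \geq \tau\,\phi(t)$ with $\tau = C_{\LSI,\norm*{\sigma}_{L^\infty}}^{-1}>0$.

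Third I would estimate the forcing term by $\abs{\dot\ell(t)\bra*{\sigma(t)-\lambda(\ell(t))}} \leq C_{\ell,\sigma}\,\abs{\dot\ell(t)}$, where $C_{\ell,\sigma} = \norm*{\abs{\lambda(\ell(\cdot))}}_{L^\infty(\R_+)}+\norm*{\sigma(\cdot)}_{L^\infty(\R_+)}$ is finite because $\ell$ is bounded (a Lipschitz function on $\R_+$ that converges at infinity) and $\lambda$ is continuous, again together with Lemma~\ref{lem:bound:M2sigma}. This produces the differential inequality $\pderiv{}{t}\phi(t) \leq -\tau\,\phi(t) + C_{\ell,\sigma}\abs{\dot\ell(t)}$; multiplying by $e^{\tau t}$ and integrating over $[0,t]$ yields~\eqref{e:LT:quant:RelEnt:eta:l2}. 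For the special case~\eqref{assume:LT:ell:quant} I would insert $\abs{\dot\ell(s)}\leq L_0 e^{-\kappa s}$ into the convolution integral and compute $\int_0^t e^{-\tau(t-s)}e^{-\kappa s}\dx{s} = \frac{e^{-\tau t}-e^{-\kappa t}}{\kappa-\tau}$ (with the obvious replacement $t\,e^{-\tau t}$ when $\kappa=\tau$), which gives~\eqref{e:LT:quant:RelEnt:eta}.

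The only genuinely delicate point is the second step. A direct attempt would run Grönwall on $\cH\bra*{\rho(t)\mid\gamma_{\sigma^*}}$, but then the dissipation is matched against $\cH\bra*{\rho(t)\mid\gamma_{\sigma(t)}}$ while error terms involving $\sigma(t)-\sigma^*$ appear, and these cannot be absorbed without first proving $\sigma(t)\to\sigma^*$. Working instead with the quasi-stationary state $\gamma_{\lambda(\ell(t))}$ --- which is the \emph{constrained} free-energy minimizer on $\cM^{\ell(t)}$ by Proposition~\ref{prop:constraint:energy:min}, precisely the property that forces $\cH\bra*{\rho\mid\gamma_{\sigma(t)}}\geq\cH\bra*{\rho\mid\gamma_{\lambda(\ell(t))}}$ --- lets the dissipation generated by the flow dominate exactly the quantity being tracked, up to the controllable forcing term. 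A secondary, purely technical issue is the differentiability of $t\mapsto\phi(t)$ and the validity of the chain rule, for which I would rely on the regularity of weak solutions recorded after Theorem~\ref{existence_of_weak_solutions}, the same regularity underpinning Lemma~\ref{lem:quasistationaryEntDiss}.
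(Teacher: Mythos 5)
Your argument is correct and matches the paper's proof essentially step for step: applying Lemma~\ref{lem:quasistationaryEntDiss} with $\eta(t)=\lambda(\ell(t))$ so the $\dot\eta$-term drops, using the logarithmic Sobolev inequality of Lemma~\ref{lem:LSI} together with the relative-entropy comparison~\eqref{e:RelEnt:lambda} to bound the dissipation below by $\tau\,\cH(\rho|\gamma_{\lambda(\ell)})$, bounding the forcing by $C_{\ell,\sigma}|\dot\ell|$, and integrating the resulting linear differential inequality by Gr\"onwall. The paper's proof writes the same chain of inequalities directly; your extra remarks on why one must track $\gamma_{\lambda(\ell(t))}$ rather than $\gamma_{\sigma^*}$ merely echo the motivation given at the start of Section~\ref{s:LT:quant}.
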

\begin{rem}
  The exponential term in~\eqref{e:LT:quant:RelEnt:eta} is bounded by
  \begin{equation}
     \frac{e^{-\tau t} - e^{-\kappa t}}{\kappa - \tau} \leq  \begin{cases}
      \frac{e^{-\tau t}}{\kappa - \tau} &, \kappa > \tau \\
      t e^{-\kappa t} &, \kappa = \tau \\
      \frac{e^{-\kappa t}}{\tau - \kappa} &, \kappa < \tau .
    \end{cases}
  \end{equation}
  Hence, whenever $\tau < \kappa$, it holds for an explicit $C=C(\tau, \kappa, L_0 , C_{\ell,\sigma})$ the bound
  \begin{equation}
    \cH\bra*{\rho(t) | \gamma_{\lambda(\ell(t))}} \leq e^{-\tau t} \bra*{ \cH\bra*{\rho | \gamma_{\lambda(\ell(0))}} + C} .
  \end{equation}
 \end{rem}
\begin{proof}
  We have from~\eqref{e:bound:M2sigma} in Lemma~\ref{lem:bound:M2sigma} that $\norm{\sigma}_{L^\infty(\R_+)} < \infty$, which by Lemma~\ref{lem:LSI} implies that there is a uniform constant $\tau := C_{\LSI,\norm{\sigma}_{L^\infty}}^{-1}$ such that $\tau \, \cH(\rho| \gamma_{\sigma}) \leq \cD(\rho,\sigma)$ for all $\abs{\sigma }\leq M$. Moreover, the bi-Lipschitz estimate~\eqref{e:lambda:deriv} on $\lambda$ and since $\ell \in L^\infty(\R_+)$ by Assumption~\ref{assume:LT:qual}, we also get $\norm{\lambda(\ell(\cdot))}_{L^\infty(\R_+)} < \infty$. Hence, the constant $C_{\ell,\sigma} <\infty$ is well-defined.

  \noindent With these preliminary considerations we apply Lemma~\ref{lem:quasistationaryEntDiss} with the choice $\eta(t) = \lambda(\ell(t))$ with $\lambda$ defined implicitly in~\eqref{def:lambda} of Proposition~\ref{prop:constraint:energy:min} and note that the last term in~\eqref{e:quasistationaryEntDiss} vanishes, since $\ell(t) = \lambda\bra*{ \gamma_{\lambda(t)}}$ by definition. By neglecting for brevity the explicit time dependence in the notation, we calculate
  \begin{align*}
    \pderiv{}{t} \cH(\rho | \gamma_{\lambda(\ell}) &= - \cD(\rho,\sigma) + \dot\ell\bra*{ \sigma - \lambda(\ell) } \\
    &\stackrel{\mathclap{\eqref{def:LSI}}}{\leq} - \tau \cH\bra*{\rho | \gamma_{\sigma} } + \dot\ell\bra*{ \sigma - \lambda(\ell)} \\
   &\stackrel{\mathclap{\eqref{e:RelEnt:lambda}}}{\leq} - \tau \cH\bra*{\rho | \gamma_{\lambda(\ell)}}
   + C_{\ell,\sigma} \abs{\dot\ell}.
  \end{align*}
  Integrating this equation gives~\eqref{e:LT:quant:RelEnt:eta:l2}, which after using~\eqref{assume:LT:ell:quant} yields~\eqref{e:LT:quant:RelEnt:eta}.
\end{proof}
There are several possible reformulations of the statement. First, we investigate the convergence of the free energy difference $\cF(\rho) - \cF(\gamma_{\sigma^*})$, which is related to the relative entropy $\cH\bra*{\rho | \gamma_{\sigma^*}}$.
\begin{cor}\label{cor:LT:RelEnt*}
  Under the exponential convergence Assumption~\eqref{assume:LT:ell:quant} on $\ell$, it follows
  \begin{equation}\label{e:LT:RelEntsigma*}
    \cH\bra{\rho(t)| \gamma_{\sigma^*}} \leq e^{-\tau t}  \cF(\rho^0) + L_0 \, C_{\ell,\sigma} \ \frac{e^{-\tau t} - e^{-\kappa t}}{\kappa - \tau}  + \frac{C_{\Var}L_0^2}{2 c_{\Var}^2 \kappa^2} \  e^{-2\kappa t}
  \end{equation}
  as well as
  \begin{equation}\label{e:LT:FreeEnergy}
    \abs[\big]{ \cF(\rho(t)) - \cF(\gamma_{\sigma^*}) - \cH\bra*{\rho(t) | \gamma_{\sigma^*}}} \leq \frac{\abs{\sigma^*} L_0}{\kappa} e^{-\kappa t}
  \end{equation}
\end{cor}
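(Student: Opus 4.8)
The strategy is to reduce both inequalities to the structural identities of Lemma~\ref{lem:RelEnt:comp} and the convergence estimate of Theorem~\ref{thm:LT:quant}, once we record that $\gamma_{\sigma^*}=\gamma_{\lambda(\ell^*)}$, i.e.\ $\sigma^*=\lambda(\ell^*)$ in the notation of Proposition~\ref{prop:constraint:energy:min}, and the elementary tail bound
\begin{equation*}
  \abs{\ell(t)-\ell^*} \leq \int_t^\infty \abs{\dot\ell(s)}\dx{s} \leq \int_t^\infty L_0 e^{-\kappa s}\dx{s} = \frac{L_0}{\kappa}\, e^{-\kappa t},
\end{equation*}
which follows from $\dot\ell\in L^1(\R_+)$ together with Assumption~\eqref{assume:LT:ell:quant}.

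For~\eqref{e:LT:FreeEnergy} I would apply the free-energy/relative-entropy identity~\eqref{e:FreeEnergy:RelEnt:Ident} with $\eta=\sigma^*$ and $\rho=\rho(t)\in\cM^{\ell(t)}$. Since $M_1(\gamma_{\sigma^*})=\ell^*$, the right-hand side becomes $\sigma^*\bra*{\ell(t)-\ell^*}$, so that $\abs[\big]{\cF(\rho(t))-\cF(\gamma_{\sigma^*})-\cH(\rho(t)\,|\,\gamma_{\sigma^*})}=\abs{\sigma^*}\,\abs{\ell(t)-\ell^*}$, and the claimed bound is immediate from the tail estimate above.

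For~\eqref{e:LT:RelEntsigma*} I would chain three ingredients. Applying~\eqref{e:RelEnt:lambda:cor} with $\ell=\ell(t)$ gives $\cH(\rho(t)\,|\,\gamma_{\sigma^*})\leq \cH(\rho(t)\,|\,\gamma_{\lambda(\ell(t))})+\tfrac{C_{\Var}}{2c_{\Var}^2}\abs{\ell^*-\ell(t)}^2$. The first term on the right is controlled by Theorem~\ref{thm:LT:quant} in the form~\eqref{e:LT:quant:RelEnt:eta}, namely by $e^{-\tau t}\,\cH(\rho^0\,|\,\gamma_{\lambda(\ell(0))})+C_{\ell,\sigma}L_0\,\tfrac{e^{-\tau t}-e^{-\kappa t}}{\kappa-\tau}$, and one then replaces $\cH(\rho^0\,|\,\gamma_{\lambda(\ell(0))})$ by $\cF(\rho^0)$: the \emph{lower} bound in~\eqref{e:RelEnt:lambda} with $\eta=0$ and $\ell=\ell(0)$ gives $\cH(\rho^0\,|\,\gamma_0)-\cH(\rho^0\,|\,\gamma_{\lambda(\ell(0))})\geq 0$, and since $\cH(\rho\,|\,\gamma_0)=\cF(\rho)$ (noted after Proposition~\ref{prop:constraint:energy:min}) this yields $\cH(\rho^0\,|\,\gamma_{\lambda(\ell(0))})\leq\cF(\rho^0)$. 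Finally, inserting $\abs{\ell^*-\ell(t)}^2\leq\tfrac{L_0^2}{\kappa^2}e^{-2\kappa t}$ from the tail estimate assembles exactly~\eqref{e:LT:RelEntsigma*}.

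There is no substantial obstacle; everything is bookkeeping once the earlier lemmas are in place. The only points requiring care are the identification $\sigma^*=\lambda(\ell^*)$ (so that $M_1(\gamma_{\sigma^*})=\ell^*$ may be used), getting the sign right when passing from $\cH(\rho^0\,|\,\gamma_{\lambda(\ell(0))})$ to $\cH(\rho^0\,|\,\gamma_0)=\cF(\rho^0)$ via the lower variance bound in~\eqref{e:RelEnt:lambda}, and applying the tail estimate with the forcing rate $\kappa$ rather than the relaxation rate $\tau$.
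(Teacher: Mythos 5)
Your proposal is correct and takes essentially the same route as the paper: chain the comparison~\eqref{e:RelEnt:lambda:cor} (so that $\cH(\rho(t)\,|\,\gamma_{\sigma^*})$ is bounded by $\cH(\rho(t)\,|\,\gamma_{\lambda(\ell(t))})$ plus the quadratic correction), invoke Theorem~\ref{thm:LT:quant}, and use the identity~\eqref{e:FreeEnergy:RelEnt:Ident} with $\eta=\sigma^*$ together with the tail bound $\abs{\ell(t)-\ell^*}\leq \tfrac{L_0}{\kappa}e^{-\kappa t}$. You also make explicit a step the paper leaves implicit, namely that $\cH(\rho^0\,|\,\gamma_{\lambda(\ell(0))})\leq \cH(\rho^0\,|\,\gamma_0)=\cF(\rho^0)$ follows from the lower bound in~\eqref{e:RelEnt:lambda} with $\eta=0$, which correctly accounts for the $e^{-\tau t}\cF(\rho^0)$ term in~\eqref{e:LT:RelEntsigma*}.
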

\begin{rem}
  The convergence in relative entropy also implies convergence the $L^1$-norm via the classical Csiszár-Kullback-Pinsker inequality: For all $\rho,\gamma \in \cP(\R)$ holds
\begin{equation}\label{e:CKP}
  \int \abs{\rho(x) - \gamma(x)} \dx{x} \leq \sqrt{2 \cH(\rho | \gamma)}.
\end{equation}
  Hence, we have for instance in the case of $\tau < \kappa$
  \begin{equation}
    \int \abs*{\rho(t,x) - \gamma_{\sigma^*}(x)}\dx{x} \leq \sqrt{2 \cH\bra{\rho(t)| \gamma_{\sigma^*}}} \leq  C e^{-\frac{\tau}{2} t} ,
  \end{equation}
  for some $C$ explicitly given in terms of the constants on the right hand side of~\eqref{e:LT:RelEntsigma*}.
\end{rem}
\begin{proof}
  Let us start by using the comparison~\eqref{e:RelEnt:lambda:cor} to estimate $\cH\bra*{\rho | \gamma_{\lambda(\ell)}}$ from above by $\cH\bra*{\rho | \gamma_{\sigma^*}}$. Then, we can conclude~\eqref{e:LT:RelEntsigma*} from the convergence assumption~\eqref{assume:LT:ell:quant} implying
  \begin{equation*}
    \abs{\ell^* - \ell(t)} \leq \frac{L_0}{\kappa} e^{-\kappa t} .
  \end{equation*}
  On the other hand, we can use the comparison~\eqref{e:FreeEnergy:RelEnt:Ident} for free energy difference $\cF(\rho) - \cF(\gamma_{\sigma^*})$ and the relative entropy relative entropy $\cH\bra*{\rho | \gamma_{\sigma^*}}$, which yields the identity
  \begin{align*}
    \abs*{\cF(\rho) - \cF(\gamma_{\sigma^*}) - \cH\bra*{\rho | \gamma_{\sigma^*}}} = \abs*{\sigma^*} \abs*{ \ell - \ell^* } .
  \end{align*}
  Hence, for all $t$ and by the convergence assumption~\eqref{assume:LT:ell:quant} on $\ell$ follows~\eqref{e:LT:FreeEnergy}.
\end{proof}
To obtain a convergence statement for the Lagrange multiplier, we introduce the weighted Csiszár-Kullback-Pinsker inequality due to~\cite{BV05}.
\begin{lem}[{Weighted Csiszár-Kullback-Pinsker inequality \cite[Theorem 2.1]{BV05}}]\label{lem:Pinsker}$ $\\
  Let $\rho, \gamma \in \cP_2(\R_+)$ be absolutely continuous. Let $w:\R_+ \to \R_+$ be such that $C_w := \int e^{w^2} \dx{\gamma} < \infty$. Then, it holds
  \begin{equation}\label{e:Pinsker:sub}
    \int w \abs{\rho - \gamma} \leq \sqrt{2\bra*{ 1+ \log C_w} \cH\bra*{\rho | \gamma}} .
  \end{equation}
\end{lem}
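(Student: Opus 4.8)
This is precisely~\cite[Theorem~2.1]{BV05}, and the plan is to reproduce its short argument, which may be read as a weighted refinement of the classical Csisz\'ar--Kullback--Pinsker inequality~\eqref{e:CKP}. If $\cH(\rho|\gamma)=\infty$ there is nothing to prove, so one may assume $\cH(\rho|\gamma)<\infty$. The two ingredients are the Gibbs variational principle (Donsker--Varadhan lemma),
\[
  \int \phi \dx\rho \;\le\; \cH(\rho|\gamma) + \log\int e^{\phi}\dx\gamma
  \qquad\text{for every measurable $\phi$ with $\int e^{\phi}\dx\gamma<\infty$,}
\]
and the dual representation of the weighted total variation,
\[
  \int w\,\abs{\rho-\gamma} \;=\; \sup\bigl\{\, \int f \dx{(\rho-\gamma)} \;:\; \abs{f}\le w \,\bigr\} \;=\; \int f_\star \dx{(\rho-\gamma)},\qquad f_\star := w\,\operatorname{sign}(\rho-\gamma).
\]

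First I would apply the variational principle with $\phi = t f_\star$ for a scaling parameter $t>0$ and then subtract $t\int f_\star\dx\gamma$ from both sides; since $\int f_\star\dx\rho - \int f_\star\dx\gamma = \int f_\star\dx{(\rho-\gamma)} = \int w\,\abs{\rho-\gamma}$, this gives
\[
  t\int w\,\abs{\rho-\gamma} \;\le\; \cH(\rho|\gamma) + \Lambda_\gamma(t),
  \qquad \Lambda_\gamma(t) := \log\int e^{\,t(f_\star - \bar f)}\dx\gamma, \quad \bar f := \int f_\star\dx\gamma ,
\]
that is, the relative entropy plus the $\gamma$-centred cumulant generating function of $f_\star$. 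Second --- the technical core --- I would show that the hypothesis $C_w=\int e^{w^2}\dx\gamma<\infty$, a sub-Gaussian-type tail condition, yields an estimate of the form $\Lambda_\gamma(t)\le \tfrac{t^{2}}{2}\,(1+\log C_w)$ for all $t\ge 0$. The mechanism is an elementary exponential inequality, for instance $e^{s}\le s+e^{s^{2}}$, valid for all $s\in\R$ since the difference is convex with a double zero at the origin, applied pointwise; combined with $\abs{f_\star-\bar f}\le w+\abs{\bar f}$, the bound $\abs{\bar f}\le\int w\dx\gamma\le\sqrt{\log C_w}$ (Jensen, applied twice), and the moment control encoded in $C_w$, one extracts the quadratic-in-$t$ estimate. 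Inserting it into the previous display and minimising over $t$ --- the minimiser being $t = \bigl(2\cH(\rho|\gamma)/(1+\log C_w)\bigr)^{1/2}$, with optimal value $\sqrt{2(1+\log C_w)\,\cH(\rho|\gamma)}$ --- produces exactly~\eqref{e:Pinsker:sub}.

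The routine parts are the Gibbs principle and the one-variable optimisation in $t$. The delicate step is the sub-Gaussian bound on the centred cumulant $\Lambda_\gamma(t)$: because the optimal $t$ may be large (when $\cH(\rho|\gamma)$ is large), the estimate must hold for \emph{all} $t$, not only on the range $t\le 1$ where Jensen's inequality may be applied directly to $e^{t^{2}w^{2}}$, and obtaining the clean constant $1+\log C_w$ rather than a cruder multiple requires treating the cross term between $w$ and the small mean $\bar f$ without wasteful Young-type splittings. This bookkeeping is exactly what is carried out in~\cite{BV05}; we only emphasise that no pointwise bound on $w$ is needed, merely the integrability $\int e^{w^{2}}\dx\gamma<\infty$.
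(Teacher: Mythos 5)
The paper does not prove Lemma~\ref{lem:Pinsker}; it is quoted verbatim from \cite[Theorem~2.1]{BV05} and used as a black box in Corollary~\ref{cor:LT:sigma*}. So there is no in-paper argument to compare against, and the question is whether your reconstruction stands on its own.

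Your overall scaffolding is sound and is indeed the right way to \emph{package} the result: identify $\int w\,\abs{\rho-\gamma}$ with $\sup_{\abs{f}\le w}\int f\,\dx{(\rho-\gamma)}$, apply the Donsker--Varadhan variational formula to $\phi=tf_\star$, center to get $t\int w\,\abs{\rho-\gamma}\le\cH(\rho|\gamma)+\Lambda_\gamma(t)$, and optimise in $t$. The optimisation is also done correctly. But the entire analytic content of the lemma is the sub-Gaussian bound $\Lambda_\gamma(t)\le\tfrac{t^2}{2}\bra{1+\log C_w}$ for \emph{all} $t>0$, and your proposal does not establish it; it names a candidate mechanism and then explicitly defers to \cite{BV05}. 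That mechanism does not in fact close as described. Applying $e^s\le s+e^{s^2}$ pointwise with $s=t(f_\star-\bar f)$ and integrating kills the linear term, leaving $e^{\Lambda_\gamma(t)}\le\int e^{t^2(f_\star-\bar f)^2}\,\dx\gamma$. Even after using $\abs{f_\star-\bar f}\le w+\sqrt{\log C_w}$, the right-hand side involves $\int e^{t^2 w^2}\,\dx\gamma$ (up to a bounded prefactor), and the hypothesis $C_w=\int e^{w^2}\,\dx\gamma<\infty$ gives no control of this quantity for $t>1$ --- in general $\int e^{\beta w^2}\,\dx\gamma=\infty$ for any $\beta>1$. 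Since the optimiser $t_\star=\sqrt{2\cH(\rho|\gamma)/(1+\log C_w)}$ is unbounded as $\cH\to\infty$, restricting to $t\le 1$ is not an option. You flag this issue yourself, but acknowledging a gap is not the same as filling it; as written, the argument proves the inequality only in the regime $t_\star\le 1$, i.e.\ $\cH(\rho|\gamma)\le\tfrac12(1+\log C_w)$.

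To actually close the gap one has to argue more carefully than through a single pointwise convexity inequality followed by Jensen --- for instance by working directly with the density $h=\dx\rho/\dx\gamma$ and a Young-type pairing between $s\mapsto e^s$ and $y\mapsto y\log y-y+1$, which produces the factor $\int e^{w^2}\,\dx\gamma$ with exponent exactly one and yields a bound valid uniformly in the entropy. That is the computation carried out in \cite{BV05}, and it is not routine bookkeeping; it is the heart of the proof. As it stands, your write-up is an accurate outline of the strategy but not a proof.
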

The weighted Csiszár-Kullback-Pinsker inequality allows to compare $\bra{\sigma-\sigma^*}$ with the relative entropy $\cH\bra*{\rho | \gamma_{\lambda(\ell)}}$.
\begin{cor}\label{cor:LT:sigma*}
  Under Assumption~\ref{assume:LT:qual} holds for all $t\geq 0$ and for a constant $C$ depending on the potential $H$ and the constants in Assumption~\ref{assume:LT:qual} the estimate
  \begin{equation}
    \bra*{\sigma(t)-\sigma^*}^2 \leq C \cH\bra*{\rho(t) | \gamma_{\lambda(\ell(t))}} + 4 \abs[\big]{\dot \ell(t)}^2 + \frac{2}{c_{\Var}^{2}} \abs{ \ell(t) -\ell^*}^2 .
  \end{equation}
  In particular, under the exponential convergence Assumption~\eqref{assume:LT:ell:quant} on $\ell$ with $\kappa > \tau$ it follows for some explicit constant $\tilde C$
  \begin{equation}
    \bra*{\sigma(t) - \sigma^*}^2 \leq \tilde C e^{-\tau \, t}.
  \end{equation}
\end{cor}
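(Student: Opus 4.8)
The plan is to write $\sigma(t)-\sigma^*$ as the sum of a contribution measuring the deviation of $\rho(t)$ from its quasistationary profile $\gamma_{\lambda(\ell(t))}$ and a contribution governed only by $\abs{\ell(t)-\ell^*}$. The key algebraic observation is that every $\gamma_{\eta}$ from~\eqref{def:gamma} is a steady state of the unconstrained Fokker--Planck operator, so that integrating the identity $\partial_x\gamma_{\eta}=\bra*{-H'+\eta}\gamma_{\eta}$ over $\R$ (the boundary terms vanishing by the quadratic growth of $H$) yields $\int H'(x)\,\gamma_{\eta}(x)\dx{x}=\eta$. Together with the representation~\eqref{lagranian_multiplier_in_introduction} of the Lagrange multiplier (recall $\tau=1$), this gives
\begin{equation*}
  \sigma(t)-\lambda(\ell(t)) = \int H'(x)\bra*{\rho(t,x)-\gamma_{\lambda(\ell(t))}(x)}\dx{x} + \dot\ell(t) .
\end{equation*}
Since $\sigma^*=\lambda(\ell^*)$ by the definition~\eqref{def:lambda} of $\lambda$, I would split $\sigma(t)-\sigma^*=\bra*{\sigma(t)-\lambda(\ell(t))}+\bra*{\lambda(\ell(t))-\lambda(\ell^*)}$ and estimate the second bracket by $c_{\Var}^{-1}\abs{\ell(t)-\ell^*}$ using the bi-Lipschitz bound~\eqref{e:lambda:biLipschitz} (which comes from~\eqref{e:lambda:deriv}).

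The heart of the argument is to dominate $\bigl(\int H'(x)\bra*{\rho(t,x)-\gamma_{\lambda(\ell(t))}(x)}\dx{x}\bigr)^2$ by $\cH\bra*{\rho(t)|\gamma_{\lambda(\ell(t))}}$. Since $H'$ grows only linearly, the plain Csisz\'ar--Kullback--Pinsker inequality~\eqref{e:CKP} does not suffice, so I would invoke the weighted version of Lemma~\ref{lem:Pinsker} with weight $w(x):=a\,(1+\abs{x})$. The parameter $a>0$ has to be chosen small enough that $w(x)^2-H(x)+\eta x\to-\infty$ as $\abs{x}\to\infty$, uniformly over $\eta$ in the bounded set $\set{\lambda(\ell(t)) : t\geq 0}$ --- which is bounded because $\ell\in L^\infty(\R_+)$ and $\lambda$ is Lipschitz by~\eqref{e:lambda:deriv} --- and such an $a$ exists precisely because of the quadratic lower growth of $H$ in~\eqref{assume:LT:H:quadratic_growth}. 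For this $a$ the quantity $C_w(\eta):=\int e^{w^2}\,\dx{\gamma_{\eta}}$ is finite and, by the continuous dependence of $\gamma_{\eta}$ on $\eta$ and compactness, uniformly bounded: $\sup_{\eta}\bra*{1+\log C_w(\eta)}=:C_w^*<\infty$. Since $\abs{H'(x)}\leq C\,(1+\abs{x})=(C/a)\,w(x)$, Lemma~\ref{lem:Pinsker} then gives
\begin{align*}
  \abs*{\int H'(x)\bra*{\rho(t,x)-\gamma_{\lambda(\ell(t))}(x)}\dx{x}}
  &\leq \frac{C}{a}\int w\,\abs*{\rho(t)-\gamma_{\lambda(\ell(t))}} \\
  &\leq \frac{C}{a}\sqrt{2\,C_w^*\,\cH\bra*{\rho(t)|\gamma_{\lambda(\ell(t))}}} .
\end{align*}

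Assembling the three pieces with $(a+b)^2\leq 2a^2+2b^2$ applied twice yields
\begin{align*}
  \bra*{\sigma(t)-\sigma^*}^2
  &\leq 2\bra*{\sigma(t)-\lambda(\ell(t))}^2 + \frac{2}{c_{\Var}^2}\abs{\ell(t)-\ell^*}^2 \\
  &\leq \widehat C\,\cH\bra*{\rho(t)|\gamma_{\lambda(\ell(t))}} + 4\abs*{\dot\ell(t)}^2 + \frac{2}{c_{\Var}^2}\abs{\ell(t)-\ell^*}^2
\end{align*}
with $\widehat C:=8\,(C/a)^2\,C_w^*$ depending only on $H$ and the constants of Assumption~\ref{assume:LT:qual}; relabelling $\widehat C$ as $C$ gives the claimed inequality. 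For the exponential statement, under~\eqref{assume:LT:ell:quant} one has $\abs{\ell(t)-\ell^*}=\abs*{\int_t^\infty\dot\ell(s)\dx{s}}\leq (L_0/\kappa)e^{-\kappa t}$, so both $\abs*{\dot\ell(t)}^2$ and $\abs{\ell(t)-\ell^*}^2$ are $O(e^{-2\kappa t})$, hence $O(e^{-\tau t})$ because $\kappa>\tau>0$; moreover Theorem~\ref{thm:LT:quant} together with the subsequent remark (using $\tau<\kappa$) gives $\cH\bra*{\rho(t)|\gamma_{\lambda(\ell(t))}}\leq C'e^{-\tau t}$. Adding the three bounds produces $\bra*{\sigma(t)-\sigma^*}^2\leq\tilde C\,e^{-\tau t}$.

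The step I expect to be the main obstacle is the construction of the weight $w$ together with the \emph{uniform} finiteness of $C_w(\eta)$ along the moving family $\gamma_{\lambda(\ell(t))}$: one has to ensure that the Gaussian-type decay $e^{-H(x)+\eta x}$ of $\gamma_{\eta}$ dominates $e^{w(x)^2}$ with a margin that does not collapse as $\eta$ runs through its bounded attainable range, which is exactly where the quadratic lower growth in~\eqref{assume:LT:H:quadratic_growth} and the boundedness of $\lambda(\ell(\cdot))$ are used. The remaining steps are routine manipulations of the already-established facts~\eqref{e:lambda:biLipschitz}, Lemma~\ref{lem:Pinsker} and Theorem~\ref{thm:LT:quant}.
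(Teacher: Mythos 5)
Your proposal is correct and follows essentially the same route as the paper: both use the steady-state identity $\int H'\,\gamma_\eta = \eta$ to recast $\sigma(t)-\lambda(\ell(t))$ as $\int H'(\rho-\gamma_{\lambda(\ell)})+\dot\ell$, split off $\lambda(\ell(t))-\lambda(\ell^*)$ via the bi-Lipschitz bound~\eqref{e:lambda:biLipschitz} (with $\sigma^*=\lambda(\ell^*)$), and control the integral term by the weighted Csisz\'ar--Kullback--Pinsker inequality of Lemma~\ref{lem:Pinsker} with a linear weight $w(x)=a(1+\abs{x})$. The only cosmetic difference is that you keep $a$ abstract and stress that it must be chosen small enough for $\sup_\eta\int e^{w^2}\,\dx{\gamma_\eta}<\infty$ over the bounded range of $\lambda(\ell(\cdot))$, whereas the paper takes an explicit $a$ tied to $\min\set{c_{H,+},c_{H,-}}$; your more careful phrasing is if anything the safer version of the same step.
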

\begin{proof}
  It is sufficient to estimate the difference $\bra{\sigma - \lambda(\ell)}^2$ and $\bra{\lambda(\ell) - \sigma^*}$. Let us note, that for all $\eta\in \R$ it holds the identity
  \begin{equation*}
    0 = \int \partial_x \gamma_\eta(x) \dx{x} = \int \bra*{ H' - \eta} \dx{\gamma_{\eta}} .
  \end{equation*}
  Hence, $\eta = \int H' \dx{\gamma_\eta}$ and together with the definition~\eqref{lagranian_multiplier_in_introduction} of $\sigma$ follows that
  \begin{align*}
    \bra*{ \sigma -\lambda(\ell)}^2 \leq 2\bra*{ \int \abs{ H'} \, \abs*{ \rho - \gamma_{\lambda(\ell)}}}^2 + 2\abs[\big]{\dot \ell}^2.
  \end{align*}
  Assumption~\ref{assume:LT:qual} implies that there exists $C_H$ such that $\abs{H'(x)} \leq C_H\bra{1+\abs{x}}$ for all $x\in \R$. By choosing $w := \min\set{c_{H,+},c_{H,-}}/2 \; (1+\abs{x})$ with $c_{H,\pm}$ from Assumption~\ref{assume:LT:qual}, we obtain since $\abs{\lambda(\ell)} \leq \lambda(0) + \norm{\ell}_{L^\infty} / c_{\Var} =: M$ that there exists $C_M$ such that $\int e^{w^2} \dx{\gamma_{\lambda(\ell)}} \leq C_M$. Hence, we can apply Lemma~\ref{lem:Pinsker} and obtain
  \begin{equation}
    \bra*{ \int \abs{ H'} \, \abs*{ \rho - \gamma_{\lambda(\ell)}}}^2 \leq \frac{8 C_H^2}{\min\set{c_{H,+}^2,c_{H,-}^2}}\bra{1+ \log C_M} \cH\bra*{ \rho | \gamma_{\lambda(\ell)}} .
  \end{equation}
  It remains to estimate $\bra{\lambda(\ell) - \sigma^*}$, which immediately follows from the bi-Lipschitz estimate~\eqref{e:lambda:biLipschitz} by noting that $\sigma^* = \lambda(\ell^*)$.
\end{proof}

\subsection{Rate of convergence in dependence on \texorpdfstring{$\nu$}{nu}}\label{s:LT:nu}

In this section, we outline how the results of the previous section can describe the behaviour of the system in different regimes. Let us include again the dependence on the
viscosity parameter~$\nu^2$ in the equation as stated in~\eqref{e:cFP} but keep the time scale $\tau=1$.

Let us incorporate the $\nu^2$ dependence in to the local equilibrium states by defining
\begin{equation}\label{def:gamma:nu}
  \gamma_{\sigma,\nu}(x) := \frac{1}{Z_{\sigma,\nu}} \exp\bra*{- \frac{H(x) - \sigma\, x}{\nu^2}}  \quad\text{with}\quad Z_{\sigma,\nu} := \int \exp\bra*{- \frac{H(x) - \sigma\, x}{\nu^2}}  \dx{x} .
\end{equation}
Moreover, let us describe the potential $H$ in more detail. The quadratic growth~\eqref{assume:LT:H:quadratic_growth} of Assumption~\eqref{assume:LT:qual} implies that the \emph{spinodal region} is finite, that is
\begin{equation}
  \Omega := \set{ x : H''(x) \leq 0 } \qquad\text{satisfies}\qquad \abs{\Omega} < \infty .
\end{equation}
Let us for the sake of simplicity of the presentation assume that $H$ does not have flat pieces, i.e. $\set{ x : H''(x) = 0}$ is a null set.

It is possible to deduce the scaling of the constants $c_{\Var,\nu}$ and $C_{\Var,\nu}$ defined analogously as in~\eqref{def:cvar} and it holds
\begin{equation}\label{def:cvar:nu}
  c_{\Var,\nu} = \inf_{\eta\in \R} \Var_{\gamma_{\eta,\nu}} \gtrsim \nu^2 \qquad\text{and}\qquad C_{\Var,\nu} = \sup_{\eta\in \R} \Var_{\gamma_{\eta,\nu}} \lesssim \nu^2 + \abs{\Omega} .
\end{equation}
Theorem~\ref{thm:LT:quant} shows that the exponential rate of convergence is determined by the logarithmic Sobolev constant as defined in~\eqref{def:LSI} and we need a $\nu$-dependent analysis for it. To do so, we introduce the set
\begin{equation}
  \Sigma = \set*{\sigma \in \R :  H'(\cdot ) = \sigma \text{ has more than one solution} } .
\end{equation}
The quadratic growth assumption~\eqref{assume:LT:H:quadratic_growth} on $H$ implies once more that $\abs{\Sigma}<\infty$. Hence, in the case $\sigma \in \Sigma$ the function $H_{\sigma}(x) = H(x) - \sigma x$ is a multiwell potential. That is, there is a finite set of points $\set{X_{i}(\sigma)}_{i=0}^M$ and we can w.l.o.g.\@ assume that $X_0(\sigma)$ is a global minimum. Then the potential posses an energy barrier defined as the largest energy difference between any local minima and the global minimum, that is for $\sigma \in \Sigma$
\begin{equation}
  \Delta H_\sigma := \!\!\!\!\!\! \inf_{\varphi\in C([0,1],\R)} \sup_{t\in [0,1]} \set*{ H_\sigma(\varphi(t)) - H_\sigma(\varphi(0)): \varphi(0) \in\set{X_{i}(\sigma)}_{i=1}^M, \varphi(1) = X_0(\sigma)} .
\end{equation}
Now, we can formulate the dependency of the logarithmic Sobolev constant on $\nu$.
\begin{lem}[Logarithmic Sobolev inequality ($\nu$-dependent)]\label{Lemmalogsob}
  Under Assumption~\ref{assume:LT:qual} satisfies the measure $\gamma_{\sigma,\nu}$ from~\eqref{def:gamma:nu}  for any $\sigma \in \R$ a logarithmic Sobolev inequality with constant $C_{\LSI,\nu}(\sigma)>0$. Here the constant $C_{\LSI,\nu}(\sigma)$ satisfies
  \begin{equation}
   C_{\LSI,\nu}(\sigma) \lesssim
  \begin{cases}
     \nu^{-2} &, \sigma \not\in \Sigma \\
     \nu^{-2}\exp\bra*{ \frac{\Delta H_\sigma}{\nu^2}} &, \sigma \in \Sigma .
  \end{cases}
  \end{equation}
  Moreover, if $H$ is convex with $H''(x) \geq k>0$ for all $x\in \R$, it holds
  \begin{equation}
     C_{\LSI,\nu}(\sigma) \leq \frac{1}{k} .
  \end{equation}
\end{lem}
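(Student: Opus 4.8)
The plan is to treat the three regimes separately, in close analogy with the $\nu=1$ argument of Lemma~\ref{lem:LSI}, but tracking the dependence on $\nu$ and, in the multiwell regime, invoking the Eyring--Kramers asymptotics of the logarithmic Sobolev constant. Throughout, recall that by Lemma~\ref{lem:bound:M2sigma} the Lagrange multiplier stays in a bounded interval, so it is enough to obtain every bound uniformly for $\sigma$ in compact sets.

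\emph{Convex case.} If $H''\geq k$ on $\R$, then $H_\sigma(x):=H(x)-\sigma x$ satisfies $H_\sigma''\geq k$ uniformly in $\sigma$, so $\gamma_{\sigma,\nu}\propto e^{-H_\sigma/\nu^2}$ has potential $H_\sigma/\nu^2$ with Hessian bounded below by $k/\nu^2$. The Bakry--Émery criterion, in the form of \cite[Corollary~1.6]{Ledoux1999} already used in Lemma~\ref{lem:LSI}, then gives directly $C_{\LSI,\nu}(\sigma)\leq 1/k$, independently of $\nu$ and $\sigma$; note that in this case $\Sigma=\emptyset$, so there is nothing further to check.

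\emph{Unimodal case $\sigma\notin\Sigma$.} Here $H'(\cdot)=\sigma$ has a unique solution, which by the quadratic growth~\eqref{assume:LT:H:quadratic_growth} is the unique and hence global minimiser $x_0(\sigma)$ of $H_\sigma$; thus $H_\sigma$ is strictly decreasing on $(-\infty,x_0(\sigma))$ and strictly increasing on $(x_0(\sigma),\infty)$. I would estimate the one-dimensional Muckenhoupt/Bobkov--Götze functional characterising the logarithmic Sobolev constant of $\gamma_{\sigma,\nu}$ on $\R$ (equivalently: decompose $H_\sigma$ into a uniformly convex part with Hessian $\geq \min\{c_{H,+},c_{H,-}\}/2$ plus a perturbation supported on the bounded spinodal region $\Omega$, and carry out the estimate by hand). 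The crucial point, distinguishing this case from the next, is that unimodality forces the ``Arrhenius exponent'' in that functional to vanish --- there is no pair (saddle, metastable minimum) --- so only polynomial corrections in $\nu$ survive and one obtains $C_{\LSI,\nu}(\sigma)\lesssim \nu^{-2}$, uniformly on $\R\setminus\Sigma$ since $\Sigma$ is compact.

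\emph{Multiwell case $\sigma\in\Sigma$.} Now $H_\sigma$ is a genuine multiwell potential with the finitely many local minima $\{X_i(\sigma)\}_{i=0}^M$, $X_0(\sigma)$ global, and barrier $\Delta H_\sigma$. I would invoke the energy-landscape decomposition of the logarithmic Sobolev constant from \cite{MS12} (the Eyring--Kramers scaling), which yields $C_{\LSI,\nu}(\sigma)\lesssim \nu^{-2}\exp(\Delta H_\sigma/\nu^2)$; in one dimension this can alternatively be read off the explicit Muckenhoupt functional, the worst-case competitor being a point just beyond the highest saddle separating a metastable well from the well of $X_0(\sigma)$, for which the product of the tail mass (times its logarithm) with $\int_{X_0(\sigma)}^{\cdot}\gamma_{\sigma,\nu}^{-1}$ is of order $\nu^{-2}e^{\Delta H_\sigma/\nu^2}$ up to polynomial factors; uniformity in $\sigma$ follows from continuity of the minima, saddles and hence of $\Delta H_\sigma$ on the compact set $\Sigma$. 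The main obstacle is precisely this $\nu\to 0$ analysis in the multiwell regime: identifying the exponent as exactly the barrier $\Delta H_\sigma$ --- rather than the larger oscillation of $H_\sigma$ on $\Omega$, which a crude Holley--Stroock perturbation would produce --- while keeping the prefactor polynomial and everything uniform in $\sigma$; the companion claim in the unimodal case, that the would-be exponential factor is genuinely absent, needs the same sharp one-dimensional bound rather than the Bakry--Émery/Holley--Stroock combination that sufficed for $\nu=1$.
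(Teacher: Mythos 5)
Your overall strategy — Bakry--\'Emery for the convex case, a sharper one-dimensional/landscape argument for the unimodal and multiwell cases, and the citation of \cite{MS12} for the Eyring--Kramers scaling --- is the same route the paper takes, which simply invokes \cite[Theorem~2.10]{MS12} and \cite[Corollary~2.17]{MS12} for the nonconvex cases. Your warning that a crude Holley--Stroock perturbation would overshoot (oscillation of $H_\sigma$ on $\Omega$ rather than the barrier $\Delta H_\sigma$) is correct and is precisely why one must rely on the sharp one-dimensional results rather than the argument of Lemma~\ref{lem:LSI}.

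However, there is a genuine gap that affects every power of $\nu$ you write down: you never perform the rescaling that relates the ``raw'' logarithmic Sobolev constant of $\gamma_{\sigma,\nu}$ to the constant $C_{\LSI,\nu}(\sigma)$ appearing in the paper's inequality $\nu^2\,\cH(\rho\,|\,\gamma_{\sigma,\nu}) \leq C_{\LSI,\nu}(\sigma)\,\cD(\rho,\sigma)$. The dissipation $\cD$ of \eqref{e:def:Dissipation} equals $\nu^4\int|\partial_x\log(\rho/\gamma_{\sigma,\nu})|^2\,\rho$, and the physically relevant entropy is $\nu^2\cH$ (compare \eqref{e:def:freeEnergy} with \eqref{e:def:RelEnt}); hence if $\tilde C$ denotes the usual LSI constant of $\gamma_{\sigma,\nu}$ in the form $\cH \leq \tilde C\int|\partial_x\log(\rho/\gamma_{\sigma,\nu})|^2\,\rho$, then $C_{\LSI,\nu}(\sigma)=\nu^{-2}\tilde C$. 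Without this identity, the conclusions do not follow: in the convex case Bakry--\'Emery applied to the rescaled potential $H_\sigma/\nu^2$ gives the \emph{raw} constant $\tilde C\lesssim \nu^2/k$, and only after multiplying by $\nu^{-2}$ does one obtain $C_{\LSI,\nu}\leq 1/k$ --- your step ``gives directly $C_{\LSI,\nu}(\sigma)\leq 1/k$'' silently cancels $\nu^2$ against $\nu^{-2}$. Likewise, the prefactor $\nu^{-2}$ in the unimodal and Kramers cases is entirely produced by this normalisation; a reader following your argument literally (raw LSI constant of a single-well measure $\propto e^{-H_\sigma/\nu^2}$) would arrive at an $O(1)$ or $O(\nu^2)$ bound, not $\nu^{-2}$. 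You should state the normalisation once, explicitly, and then the rest of your sketch becomes correct. A smaller imprecision: ``uniformly on $\R\setminus\Sigma$ since $\Sigma$ is compact'' is not the relevant uniformity --- what is needed, and what the paper uses, is uniformity for $\abs{\sigma}\leq\norm{\sigma}_{L^\infty(\R_+)}$, which follows from the a priori bound in Lemma~\ref{lem:bound:M2sigma}.
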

\begin{proof}
  The results are contained in the literature. However, we have to be careful by translating the results and the scaling of $\nu$. In \cite{MS12}, the following logarithmic Sobolev inequality is proven
  \[
    \cH(\rho | \gamma_{\sigma,\nu}) \leq \tilde C_{\LSI,\nu}(\sigma) \int \bra*{ \partial_x  \log \frac{\rho}{\gamma_{\sigma,\nu}}}^2 \dx{\rho} .
  \]
  By comparing \eqref{e:def:freeEnergy} and~\eqref{e:def:RelEnt} it follows that $\nu^2 \cH(\rho|\gamma_{0,\nu^2}) = \cF(\rho)$. Likewise, the dissipation $\cD$ in~\eqref{e:def:Dissipation} comes with an additional factor $\nu^4$ and therefore the $\nu$-dependent logarithmic Sobolev inequality from~\eqref{def:LSI} takes the form
  \[
    \nu^2 \cH(\rho | \gamma_{\sigma,\nu}) \leq C_{\LSI,\nu}(\sigma) \nu^4 \int \bra*{ \partial_x  \log \frac{\rho}{\gamma_{\sigma,\nu}}}^2 \dx{\rho} .
  \]
  By comparison, we can read of $C_{\LSI,\nu}(\sigma) = \nu^{-2} \tilde C_{\LSI,\nu}(\sigma)$. The detailed scaling of $\tilde C_{\LSI,\nu}(\sigma)$ in $\nu$ for the case $\sigma\not\in \Sigma$ follows from \cite[Theorem 2.10]{MS12}. The case $\sigma\not\in \Sigma$ is content of \cite[Corollary 2.17]{MS12}. Finally, the convex case follows as in Lemma~\ref{lem:LSI}, by taking the rescaled Hamiltonian $H/\nu^2$ into account.
\end{proof}
With Lemma \ref{Lemmalogsob} we can now conclude on the different cases. For that, we assume that $\ell$ satisfies the exponential convergence Assumption~\ref{assume:LT:ell:quant} for some $\kappa$,
which in the next statements is always assumed to be larger then $\tau$. Then, there exists $\tau > 0$ and a constant $C_0$ only depending on the initial values and $L_0$ such that
\begin{equation}\label{e:bound:nu}
  \abs*{\sigma(t) - \sigma^*} + \int \abs*{ \rho(t) - \gamma_{\lambda(\ell^*)}} \leq C_0 \exp\bra*{ - \frac{\tau}{2} \, t} ,
\end{equation}
where $\tau = \tau(\nu)$ is given as follows

\noindent\emph{Convex case:} If $H''(x) \geq k > 0$, then $\tau_{\convex} = k$.

\noindent\emph{Unimodal case:} If $\abs{\Sigma} = 0$, then $\tau_{\unimodal} = c \nu^2 $ for some $c>0$.

\noindent\emph{Kramers case:} If $\abs{\Sigma} > 0$, then $\tau_{\Kramers} = c \nu^2 \exp\bra*{ - \frac{ \Delta H^*}{\nu^2} }$ for some $c>0$ and the $\Delta H^* := \sup_{\sigma \in \Sigma} \Delta H_\sigma$.

Let us point out that in Kramers case, the convergence rate can be improved by also taking $\sigma^* = \lambda(\ell^*)$ into the account. The bound~\eqref{e:bound:nu} allows for a self-improvement of $\tau_{\Kramers}$ in time in the case when $\sigma^* \not\in \Sigma$. By defining the time $T_0(\sigma^*,\nu)$ such that $C_0 \exp\bra*{ - \frac{\tau_{\Kramers}}{2}\, T_0(\sigma^*,\nu)} = \operatorname{dist}(\sigma^*, \Sigma)=: d_*$, it follows from~\eqref{e:bound:nu} that $\sigma(t) \not\in \Sigma$ for all $t\geq T_0(\sigma^*,\nu)$ and hence, we are back in the unimodal case. The total convergence estimate becomes
\begin{equation}
  \abs*{\sigma(t) - \sigma^*} + \int \abs*{ \rho(t) - \gamma_{\lambda(\ell^*)}} \leq d_0 \bra*{\max\set*{\frac{C_0}{d_0},1}}^{\frac{\tau_{\unimodal}}{\tau_{\Kramers}}} \exp\bra*{ - \frac{\tau_{\unimodal}}{2} t} .
\end{equation}
The estimate shows, that at least in the case of sufficiently well-prepared initial values such that $C_0 \leq \operatorname{dist}(\sigma^*, \Sigma)$, the convergence rate is not exponentially small but behaves linear in $\nu^2$. The reason is, that the a priori estimates on the Lagrange multiplier does ensure, that the effective potential $H_\sigma$ has always a unimodal structure.

\subsection*{Acknowledgment}
B.N.\ and A.S.\ acknowledge support through the CRC 1060
\emph{The Mathematics of Emergent Effects} at the University of Bonn that is funded through the German Science Foundation (DFG).

\bibliographystyle{abbrv}
\bibliography{bib}

\end{document}